\documentclass[11pt]{article}
\usepackage[a4paper,margin=25mm]{geometry}
\usepackage[T1]{fontenc}
\usepackage{lmodern,amsmath,amssymb,amsthm,mathtools,microtype,booktabs,array}
\usepackage[hidelinks]{hyperref}
\usepackage{aliascnt}
\usepackage[nameinlink,noabbrev]{cleveref}
\numberwithin{equation}{section}
\newtheorem{theorem}{Theorem}[section]
\newaliascnt{proposition}{theorem}
\newtheorem{proposition}[proposition]{Proposition}
\aliascntresetthe{proposition}
\newaliascnt{lemma}{theorem}
\newtheorem{lemma}[lemma]{Lemma}
\aliascntresetthe{lemma}
\newaliascnt{corollary}{theorem}
\newtheorem{corollary}[corollary]{Corollary}
\aliascntresetthe{corollary}
\theoremstyle{definition}\newaliascnt{definition}{theorem}
\newtheorem{definition}[definition]{Definition}
\aliascntresetthe{definition}
\theoremstyle{remark}\newaliascnt{remark}{theorem}
\newtheorem{remark}[remark]{Remark}
\aliascntresetthe{remark}
\crefname{theorem}{Theorem}{Theorems}
\crefname{lemma}{Lemma}{Lemmas}
\crefname{proposition}{Proposition}{Propositions}
\crefname{corollary}{Corollary}{Corollaries}
\newcommand{\PP}{\mathbb P}\newcommand{\C}{\mathbb C}\newcommand{\R}{\mathbb R}
\newcommand{\cC}{\mathcal C}\newcommand{\cZ}{\mathcal Z}\newcommand{\cT}{\mathcal T}
\newcommand{\Gr}{\operatorname{Gr}}
\newcommand{\Chow}{\operatorname{Chow}}\newcommand{\Hilb}{\operatorname{Hilb}}
\newcommand{\Vol}{\operatorname{Vol}}\newcommand{\diam}{\operatorname{diam}}
\newcommand{\dist}{\operatorname{dist}}\newcommand{\supp}{\operatorname{supp}}
\newcommand{\tr}{\operatorname{tr}}\newcommand{\Sing}{\operatorname{Sing}}\newcommand{\id}{\mathrm{id}}
\newcommand{\eps}{\varepsilon}
\title{Wasserstein Metric Normalization of Projective Cycle Spaces}
\author{Jiaping Yang\thanks{School of Mathematical Sciences, Fudan University, 220 Handan Road, Shanghai 200433, China} \thanks{Email:jpyang22@m.fudan.edu.cn}}
\date{}
\begin{document}
\maketitle

\begin{abstract}
Let $X$ be an irreducible reduced projective subvariety of a Chow variety.  On a
generic smooth parameter locus, normal motions of resolved components define a
quadratic Wasserstein metric, even when the cycles are singular, reducible, or
carry multiplicities.  Its metric completion is canonically $X^\nu$: resolution
fibers collapse, whereas distinct normalization branches over the same Chow cycle
remain separated.  The completed metric admits an exact ambient action formula.
For degree-$d$ hypersurfaces, the inner $W_q$ geometry is compact and induces the
projective topology for every $1\le q\le2$.  For $d\ge2$, the uniform H\"older
exponent $1/d$ is attained and sharp for each such $q$; when $d=1$, the comparison
is Lipschitz.  Elliptic quartics furnish an explicit model of the boundary
branching.
\end{abstract}

\medskip\noindent\textbf{Keywords.} Optimal transport; Wasserstein geometry; algebraic cycles; metric completion; normalization.

\medskip\noindent\textbf{MSC 2020.} Primary 49Q22, 14C05; Secondary 14B05, 32U05, 53C55.

\section{Introduction}

A smooth embedded projective cycle has two natural descriptions: it is a point of
a Chow variety, and its normalized Fubini--Study volume is a probability measure
on projective space.  For a smooth deformation $Z_t$ with normal velocity $N_t$,
\begin{equation}\label{eq:intro-speed}
  |\dot\mu_t|_{W_2}^2=\int_{Z_t}|N_t|^2\,d\mu_t.
\end{equation}
The identity induces the intrinsic quadratic Wasserstein length on the smooth cycle
locus.  Two boundary questions then become natural: which algebraic boundary does
this length select, and how much of the transport structure survives there?

Let
\[
  X\subset\Chow_{m,D}(\PP^n)
\]
be an irreducible reduced projective subvariety, and let
\[
  \nu:Y=X^\nu\longrightarrow X
\]
be its normalization.  On a dense smooth Zariski-open locus
$B\subset X_{\rm reg}$, the irreducible components of the family admit local
smooth proper resolutions with fixed multiplicities; see
\cref{def:admissible,lem:generic-resolved}.  The cycles parametrized by $B$ need
not themselves be smooth or irreducible, and may carry multiplicities.  The normal
motions of the resolved components define a smooth positive definite metric
$g_B$ satisfying
\begin{equation}\label{eq:intro-general-speed}
  |\dot\mu_{b(t)}|_{W_2}^2=g_B(\dot b(t),\dot b(t))
\end{equation}
for every absolutely continuous curve $b(t)$ in $B$.

\begin{theorem}[Metric normalization]\label{thm:intro-main}
The intrinsic Wasserstein completion of $B$ is canonically homeomorphic to $Y$:
\[
  \widehat{(B,d_B)}\cong X^\nu.
\]
The metric space $(Y,\bar d)$ is compact, complete, and geodesic.  Moreover, for
every chordal distance $h_Y$ induced by a projective embedding of $Y$, there exist
$C,\beta>0$ such that
\[
  \bar d(y,z)\le C h_Y(y,z)^\beta.
\]
Distinct normalization branches over one Chow cycle define distinct completion
points, whereas resolution data over a fixed normalization point collapse.
\end{theorem}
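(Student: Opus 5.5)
The plan is to compare the intrinsic length metric $d_B$ with two extrinsic objects on $Y$. The first is the pulled-back Wasserstein distance $W_2(\mu_{\nu(y)},\mu_{\nu(z)})$, which bounds $d_B$ from below. The second is a chordal distance $h_Y$, which will be shown to control $d_B$ from above by a Hölder power. Concretely, I will establish three things in order.

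First, a \emph{lower bound and continuity}. By \eqref{eq:intro-general-speed}, any absolutely continuous curve in $B$ has $g_B$-length equal to its $W_2$-metric-derivative length. Hence $d_B(b,b')\ge W_2(\mu_b,\mu_{b'})$. The map $y\mapsto\mu_{\nu(y)}$ is continuous from $Y$ into $(\mathcal P(\PP^n),W_2)$, since Chow points determine currents continuously and $W_2$ metrizes weak convergence on compact space.

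Second, a \emph{Hölder upper bound}: I aim to prove $d_B(b,b')\le C h_Y(b,b')^\beta$ for $b,b'\in B$. I will take a finite cover of $Y$ by charts in which $Y$ is given by a finite branched map to a polydisc, via Noether normalization of the local ring. In such a chart, two points of $B$ are joined by straight segments in the polydisc that avoid the discriminant, up to a codimension count. Lifting these segments to $Y$ gives curves whose lengths are controlled by a Łojasiewicz-type inequality. The key local estimate is the following. Along a real-analytic curve $\gamma(s)$ in $Y$ meeting $Y\setminus B$ only at finitely many points, the speed satisfies $g_B(\dot\gamma,\dot\gamma)^{1/2}\le C|s-s_0|^{-1+\alpha}$ near each bad point $s_0$. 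This holds because $g_B$ is defined by normal velocities of resolved components, which are real-analytic in a resolution of the family. The entries of $g_B$ in the pulled-back coordinates therefore have at worst polar growth with rational exponents, and integrability of $\int|N|^2d\mu$ is bounded by the total volume, which is constant. Integrating this speed bound gives the Hölder bound with $\beta$ from the Łojasiewicz exponents, uniformly by compactness.

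Third, \emph{identifying the completion}. Together, the two bounds show that $d_B$ extends to a pseudometric $\bar d$ on $Y$ that is continuous for the projective topology. I must also show $\bar d(y,z)>0$ for $y\ne z$. When $\nu(y)\ne\nu(z)$, this follows from the lower bound $W_2(\mu_{\nu(y)},\mu_{\nu(z)})>0$, because distinct Chow cycles give distinct measures: the measure recovers the support and multiplicities of a positive cycle. The delicate case is \emph{distinct branches over one Chow cycle}, and I expect it to be the main obstacle. Here I will use normality of $Y$ to find disjoint neighbourhoods $U_y,U_z$ in $Y$ such that $U_y\cap B$ and $U_z\cap B$ are locally connected; their images then lie in distinct local branches of $X$. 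Any path in $B$ from near $y$ to near $z$ must exit $\nu(U_y)$. Since $\nu$ is finite, leaving $\nu(U_y)$ forces the cycle to move to a definite Chow distance $\delta$ away from $\nu(y)$. Hence $W_2$ changes by at least $\eta(\delta)>0$, and the $d_B$-length is bounded below by $2\eta(\delta)$.

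Given this separation, $(Y,\bar d)$ is a continuous bijective image of compact $Y$ into a Hausdorff space, hence homeomorphic to $Y$. It contains $B$ densely with the metric extending $d_B$, so it is the completion; in particular it is compact and complete. It is geodesic because it is a complete, locally compact length space (Hopf–Rinow), the length structure being inherited from $B$. Collapse of resolution fibers is automatic, since the completion is $Y$ itself: points of a resolution over a fixed $y$ are all limits of the same Cauchy sequences in $B$.
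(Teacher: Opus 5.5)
The overall architecture (a $W_2$ lower bound, a H\"older upper bound, extension by uniform continuity, branch separation, then the compact-to-Hausdorff argument) parallels the paper. It is also true that a H\"older bound proved directly on $Y$ would make fiber collapse automatic. The gap is that this H\"older upper bound, which carries the whole theorem, is not established.

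Your reason for the integrable singularity is that $\int|N|^2\,d\mu$ is controlled by the constant total volume. That is incorrect. The quantity $\int|N|^2\,d\mu=g_B(\dot\gamma,\dot\gamma)$ is the squared Wasserstein speed, and it does blow up at the boundary: for $p_t=z_0^d-tz_1^d$ with $d\ge2$ it is $\asymp t^{2/d-2}$. The normalization $\mu(\PP^n)=1$ says nothing about it. ``Polar growth with rational exponents'' does not tell you the exponent beats $-1$, so nothing in your sketch produces $\alpha>0$.

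The missing input is quantitative control of the \emph{mass} of $\Omega_B$ near $Y\setminus B$. The paper obtains it from two facts: the curvature identity $\Omega_B=\frac1{D(m+1)}\chi^*\cT$ (\cref{prop:curvature}), and H\"older continuity of the Chow potential (\cref{prop:logholder}, which rests on positivity of the uniform complex singularity exponent and Demailly--Koll\'ar). Together they yield $\int_{B(x,r)}\tr\,\Omega\le Cr^{2N-2+\alpha}$.

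Even granting a rate along each analytic curve that hits $Y\setminus B$, you need constants and exponents uniform in the endpoints. Your segments avoid $Y\setminus B$ but pass arbitrarily close to it. Your argument gives no pointwise bound on $g_B$ in terms of the distance to $Y\setminus B$, and a H\"older potential only supplies the integral mass bound above. The paper's \cref{lem:localdistance} handles exactly this. It averages radial $g$-lengths over all directions (Cauchy--Schwarz on dyadic annuli against the mass bound) and uses Chebyshev to choose a good intermediate point, so one never needs a bound along a prescribed segment. That argument needs smooth coordinates, so it is run on a resolution $R$.

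On singular $Y$, your Noether-projection scheme has a further unaddressed problem. The lift from $b$ of a path from $\pi(b)$ to $\pi(b')$ may end on another sheet, and the sheet-changing loops around the branch locus would need length control. Consequently your claim that fiber collapse is automatic is unsupported. With the estimate available only on $R$, the paper needs three further steps:
\begin{enumerate}
\item the weak-gradient bound $Q(F_p)\le T_R$ (\cref{lem:weakgradient});
\item the rigidity \cref{lem:null}, which makes distance functions constant on the connected fibers $\rho^{-1}(y)$;
\item a \L{}ojasiewicz comparison on $R\times R$, which descends the H\"older bound to $Y$.
\end{enumerate}

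A minor fix in the separation step: take pairwise disjoint neighbourhoods of \emph{all} points of $\nu^{-1}(\nu(y))$. Leaving $U_y$ alone does not move the cycle away from $\nu(y)$ if the path enters a neighbourhood of a third preimage.
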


\Cref{thm:intro-main} is proved in \cref{sec:normalization}.  The
locus-independence assertion in the stronger formulation \cref{thm:main} is
derived later from the global action formula; see
\cref{cor:admissible-locus-independent}.

The metric completion also retains the ambient Wasserstein dynamics.  Define
\[
  \mathcal M:Y\longrightarrow\mathcal P_2(\PP^n),
  \qquad \mathcal M(y)=\mu_{\nu(y)}.
\]

\begin{theorem}[Ambient action identity]\label{thm:intro-action}
Let $\gamma:[0,T]\to Y$ be continuous and set $\mu_t=\mathcal M(\gamma(t))$.  For
every $1\le p\le\infty$,
\[
  \gamma\in AC^p([0,T],\bar d)
  \quad\Longleftrightarrow\quad
  \mu_\cdot\in AC^p([0,T],W_2),
\]
and
\[
  |\dot\gamma(t)|_{\bar d}=|\dot\mu_t|_{W_2}
  \quad\text{for a.e. }t.
\]
On $[0,1]$, the squared distance $\bar d(y_0,y_1)^2$ is the minimum ambient
Wasserstein action among continuous normalization paths joining $y_0$ to $y_1$.
\end{theorem}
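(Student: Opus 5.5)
We will prove \cref{thm:intro-action} by comparing two length structures on $Y$: the completed intrinsic metric $\bar d$ from \cref{thm:intro-main}, and the pullback of $W_2$ under $\mathcal M$. The basic input is the speed identity \eqref{eq:intro-general-speed} on the dense locus $B$, which we transport to $Y$.

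\textbf{Step 1: the easy inequality.} Since $\mathcal M$ restricted to $B$ has metric derivative equal to the $g_B$-speed, and $d_B$ is the length metric of $g_B$, we get $W_2(\mu_b,\mu_{b'})\le d_B(b,b')$ for $b,b'\in B$. The map $\mathcal M$ is continuous on $Y$, because $\nu$ is continuous and the Chow topology controls weak convergence of the normalized Fubini--Study measures on the compact space $\PP^n$. Hence $\mathcal M$ extends as a $1$-Lipschitz map $(Y,\bar d)\to(\mathcal P_2(\PP^n),W_2)$. It follows that $\gamma\in AC^p(\bar d)$ implies $\mu_\cdot\in AC^p(W_2)$, with $|\dot\mu_t|_{W_2}\le|\dot\gamma(t)|_{\bar d}$ almost everywhere.

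\textbf{Step 2: the reverse inequality.} This is the main obstacle, because the path $\gamma$ may spend a set of positive measure in the boundary $Y\setminus\nu^{-1}(B)$, where there is no metric $g_B$. We aim for a local statement: for each $y\in Y$ there is a neighborhood $U$ and a function $\omega(r)\to0$ such that
\[
\bar d(z,w)\le W_2(\mathcal M z,\mathcal M w)+\omega(\cdots)\cdot\bar d(z,w)
\]
for all $z,w\in U$.

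We would first try to show that on $\nu^{-1}(B)$ the metric $g_B$ is exactly the \emph{infinitesimal} $W_2$ metric. This holds because the normal motion field realizes the optimal tangent velocity, being a gradient in the Otto sense after projection. Combining this with the uniform continuity coming from the Hölder bound $\bar d\le Ch_Y^\beta$, a short $\bar d$-geodesic between nearby points can be perturbed into $B$. The key is that, for curves in $Y$, the upper gradient $|\dot\mu|_{W_2}$ then dominates $|\dot\gamma|_{\bar d}$.

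Concretely, we approximate $\gamma$ by curves $\gamma_k$ in $B$. These come from normal-variety local structure: $Y$ is normal, so $\nu^{-1}(B)$ is dense with complement of codimension $\ge1$, and $B$ remains locally connected near each point. We would need the ambient actions of the $\gamma_k$ to converge to that of $\gamma$. This requires a lower-semicontinuity and a recovery-sequence argument for the action $\int|\dot\mu|^2$. Branch separation from \cref{thm:intro-main} ensures that the lifts of nearby measure curves stay on the correct branch; this is why the path must be continuous into $Y$, not merely into $\mathcal P_2$.

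\textbf{Step 3: equality and the action minimum.} Given equality of metric derivatives, the $AC^p$ equivalence follows, since each metric derivative lies in $L^p$ iff the other does. Then $\bar d(y_0,y_1)^2\le\int_0^1|\dot\gamma|^2$ by Cauchy--Schwarz, with equality for a constant-speed $\bar d$-geodesic. Such a geodesic exists because $(Y,\bar d)$ is geodesic by \cref{thm:intro-main}. Rewriting via $|\dot\gamma|_{\bar d}=|\dot\mu|_{W_2}$ gives the minimum ambient action characterization.
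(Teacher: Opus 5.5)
Your Steps~1 and~3 are correct and match the paper. The bound $W_2\le\bar d$ makes $\mathcal M$ $1$-Lipschitz, which gives the forward implication and $|\dot\mu_t|_{W_2}\le|\dot\gamma(t)|_{\bar d}$. Once the speed identity is known, the action formula follows from Cauchy--Schwarz and a constant-speed $\bar d$-geodesic.

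The gap is Step~2, which is the whole content of the theorem, and neither mechanism you offer works.
\begin{itemize}
\item \emph{The local comparison.} The inequality $\bar d(z,w)\le W_2(\mathcal Mz,\mathcal Mw)+\omega\,\bar d(z,w)$ near an arbitrary $y\in Y$ would make $\mathcal M$ an asymptotic isometry for the \emph{chordal} $W_2$ distance at boundary points. Nothing you cite yields it: the speed identity for $g_B$ holds only at points of $B$, and the H\"older bound compares $\bar d$ with $h_Y$, not with $W_2$. It should not be expected in general either: already for a Euclidean cone, intrinsic and chordal distances between points on different rays near the vertex have ratio bounded away from $1$.
\item \emph{The approximation route.} This is circular. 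You need curves $\gamma_k\subset B$ whose ambient actions have $\limsup$ at most the action of $\mu$, and that is exactly the missing upper bound. Lower semicontinuity of the ambient action runs the wrong way, and the metric recovery of \cref{lem:energy-recovery} presupposes $\gamma\in AC^2(\bar d)$.
\end{itemize}
Since $\gamma$ may spend positive time in $Y\setminus B$, even on a Cantor-type set, where you have no metric at all, the proposal also never establishes that $\gamma$ is $\bar d$-absolutely continuous.

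What is missing is a way to control $\bar d$-speed by $W_2$-speed on the boundary. The paper does this in three steps.
\begin{itemize}
\item It stratifies $Y$ into smooth strata $S$ carrying resolved cycle families. On each stratum it shows $|\dot\mu|_{W_2}^2=g_S(\dot s,\dot s)$, with $\Omega_S=\frac1{D(m+1)}(\chi\nu|_S)^*\cT$.
\item It proves $\bar d\le d_{g_S}$ locally on $S$. Strata lift holomorphically to the resolution $R$. The distance functions $F_p=\bar d(p,\rho(\cdot))$ satisfy the current inequality $Q(F_p)\le T_R$ (\cref{lem:weakgradient}). A regularization argument (\cref{lem:trace}) pulls this back to $Q(F_p\circ h)\le h^*T_R=\Omega_S$, so each $F_p$ is $1$-Lipschitz for $g_S$. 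This settles curves inside one stratum (\cref{prop:stratum-distance}).
\item Curves that cross strata on fragmented time sets need a separate argument. Reparametrize by $W_2$-arclength; this uses continuity of $\gamma$ and finiteness of the fibers of $\mathcal M$, which is where continuity into $Y$ really enters, not branch separation. Use moment coordinates to build stratum-valued comparison curves that agree with the reparametrized curve on Borel pieces. Compare metric derivatives on the coincidence sets, and stitch with the area formula (\cref{lem:stitch}). This shows every $\bar d(p,\widetilde\gamma(\cdot))$, hence $\widetilde\gamma$ itself, is $1$-Lipschitz.
\end{itemize}
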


The $AC^p$ equivalence and metric-speed identity are proved in
\cref{thm:full-action}, while the minimizing-action formula is established in
\cref{cor:action-distance}.  As further consequences, $\mathcal M$ preserves the
length of every continuous path and the completed metric is independent of the
admissible smooth locus.

The H\"older exponent in \cref{thm:intro-main} is controlled in general by the
singularity exponents of the Chow forms and by a \L{}ojasiewicz exponent of a
resolution; when $X^\nu$ is smooth, the latter loss disappears.  For
hypersurfaces the comparison is sharp: both the topology of the completion and
the optimal H\"older scale can be determined explicitly.  Let
\[
  P_{n,d}:=\PP H^0(\PP^n,\mathcal O_{\PP^n}(d)).
\]
For every $1\le q\le2$, the inner Wasserstein distance
$W_{q,\mathrm{ACL}}^{\rm in}$ induces the projective topology on $P_{n,d}$,
which is therefore compact for this metric.  For any chordal distance $h$, if
$d\ge2$,
\[
  W_{q,\mathrm{ACL}}^{\rm in}(p,p')\le C_q h(p,p')^{1/d},
\]
and $1/d$ is the optimal uniform exponent for every fixed $q\in[1,2]$.  When
$d=1$, the comparison is Lipschitz.  For $q=2$, the completed metric agrees with
the inner distance of \cite{ACL} up to the Fubini--Study normalization factor;
in particular, this resolves the $q=2$ compactness problem for the inner
Wasserstein metric posed in \cite{ACL}.  See
\cref{thm:acl-compactness,cor:acl-identification}.

Smooth complete intersections provide a basic class of examples, while elliptic
quartics make the boundary branching explicit: if the singular locus of an
irreducible plane quartic consists of exactly $\delta\ge2$ ordinary nodes and no
other singularities, then its Chow point has $\binom{\delta}{2}$ normalization
branches.

\subsection{Proof outline}

The proof is organized around a logarithmic Chow potential obtained by averaging
over the Grassmannian.  The incidence calculation
\begin{equation}\label{eq:intro-curvature}
  \Omega_B=\frac1{D(m+1)}\,\chi^*\cT
\end{equation}
identifies its curvature on the smooth locus, while its H\"older modulus controls
intrinsic distance near the boundary.  After passing to a resolution,
weak-gradient rigidity collapses resolution fibers, whereas the ambient $W_2$
lower bound keeps distinct normalization branches apart; together these facts
identify the metric completion.  To recover dynamics on the completed space, we
stratify the normalization and prove the exact speed identity on each stratum.
The remaining difficulty---curves that cross strata on highly fragmented time
sets---is handled by moment coordinates and a countable stitching argument.  For
hypersurfaces, a sharp sublevel estimate, supplemented in degree two by a
symmetric-root argument, yields the optimal H\"older exponent.

\subsection{Relation to previous work}

The complex-geometric background lies in the classical theory of parameter
spaces of cycles.  Barlet's construction of analytic cycle spaces and Fujiki's
closedness results for Douady spaces of compact K\"ahler spaces provide the
basic framework \cite{Barlet75,Fujiki78}; see also the modern treatment of
Barlet--Magn\'usson \cite{BM25}.  Barlet--Varouchas proved the holomorphic
variation of integration functionals on reduced cycle spaces
\cite{BarletVarouchas89}, a result closely related to the fiber-integration
formulas used here.  Axelsson--Schumacher developed generalized
Weil--Petersson geometry on Douady spaces and its extension as a positive
current across singular fibers \cite{AS06,AS21}.  On the projective side, Chow
forms and logarithmic Chow norms provide global potentials adapted to this
setting; we use the classical framework of \cite{GKZ,PS02}.  Analytically, the
distance estimates for K\"ahler currents with H\"older potentials in \cite{GGZ}
are close in spirit to the boundary estimates below.

The metric viewpoint comes from the dynamical and Riemannian formulations of
optimal transport.  The Benamou--Brenier action formula \cite{BB00}, Otto's
formal Riemannian calculus \cite{Otto01}, and the metric theory of
\cite{AGS} relate absolutely continuous Wasserstein curves to continuity
equations and minimal velocity fields; see also Lott's differential-geometric
calculations on Wasserstein space \cite{Lott08}.  In the present setting this
ambient geometry is restricted to a finite-dimensional family of cycle
measures.  The local formula is governed by normal motion of the cycle, while
the main global issue is what happens to the intrinsic length metric when the
algebraic family reaches a singular or nonnormal boundary.

The closest direct predecessor is the work of Antonini--Cavalletti--Lerario on
projective hypersurfaces \cite{ACL}.  They introduced the corresponding inner
Wasserstein distance, proved completeness and geodesicity on the full parameter
space, and identified a Weil--Petersson-type K\"ahler structure away from the
discriminant.  Their compactness theorem covers $q$ in a range above one, while compactness
for the quadratic inner metric $W_{2,\mathrm{ACL}}^{\rm in}$ was left open.  Our
concern is different: for a fixed
projective Chow subvariety, we ask which boundary is selected by the intrinsic
transport metric.  The resulting completion retains normalization
branches that the Chow point itself does not distinguish.  For hypersurfaces,
the construction specializes to the metric of \cite{ACL} and extends the
compactness/topology comparison through $1\le q\le2$.

A complementary direction appears in the joint work of Antonini, Cavalletti,
Cecchi, and Lerario announced in \cite{LerarioFoCM26}, which develops Wasserstein
geometry for algebraic and analytic cycle spaces and relates the regular locus to
Weil--Petersson-type structures.  The elliptic quartic model
used later comes from a separate compactification theory: we use the boundary
analysis of Avritzer--Vainsencher \cite{AV92}, the birational description of the
principal Hilbert component in \cite{GLS}, and the recent normal forms in
\cite{CKSZ} to make the branch phenomenon explicit.

\subsection{Organization}

Section~2 constructs the resolved Wasserstein metric and its Chow curvature.
Section~3 proves metric normalization and the quantitative hypersurface results.
Sections~4--5 establish the stratumwise and global action formulas, and
Section~6 treats the elliptic quartic boundary model.

\section{Resolved Wasserstein geometry}\label{sec:geometry-curvature}

Fix integers $0\le m<n$ and $D\ge1$, and let
\[
  X\subset\Chow_{m,D}(\PP^n)
\]
be irreducible and reduced.  Write
\[
  \nu:Y=X^\nu\longrightarrow X
\]
for the normalization.  The case $\dim X=0$ is immediate, so we assume
$\dim X>0$ below.  Normalize the Fubini--Study form by
$\int_{\PP^1}\omega=1$ and use $dd^c=(i/\pi)\partial\bar\partial$.  With this normalization,
$\int_{\PP^r}\omega^r=1$ and
\[
  V_m:=\Vol(\PP^m)=\frac1{m!}.
\]
For an effective cycle $C=\sum_j a_jC_j$ of dimension $m$ and degree $D$, with
$C_j$ distinct reduced irreducible components, define
\begin{equation}\label{eq:measure}
  \mu_C=\frac1{DV_m}\sum_j a_j\,d\Vol_{C_j^{\rm reg}}.
\end{equation}
The normalization makes $\mu_C$ a probability measure even when $C$ is singular,
reducible, or carries nontrivial multiplicities.

\begin{definition}[Admissible locus]\label{def:admissible}
A dense smooth Zariski-open subset $B\subset X_{\rm reg}$ is called
\emph{admissible} if every point has an analytic neighborhood $U\subset B$ on
which there are smooth proper holomorphic submersions
\[
  p_i:\widetilde Z_i\longrightarrow U
\]
with connected smooth $m$-dimensional fibers, holomorphic maps
\[
  e_i:\widetilde Z_i\longrightarrow\PP^n,
\]
and fixed integers $a_i>0$, such that for every $b\in U$
\begin{equation}\label{eq:generic-resolved-cycle}
  C_b=\sum_i a_i(e_i)_*[\widetilde Z_{i,b}].
\end{equation}
The images are the distinct reduced irreducible components of $C_b$, each
$e_{i,b}$ is birational onto its image, and the isomorphism loci may be chosen
as relative open subsets whose complements are fiberwise proper analytic sets.
\end{definition}

\begin{lemma}\label{lem:generic-resolved}
Let $W$ be an irreducible reduced complex algebraic variety and let
$\kappa:W\to\Chow_{m,D}(\PP^n)$ be finite onto its image.  There is a nonempty
smooth Zariski-open set $U\subset W$ on which $\kappa$ is immersive and for
which the cycle family $\kappa|_U$ admits resolved presentations of the form
\eqref{eq:generic-resolved-cycle}, locally in the analytic topology.  In
particular, every irreducible reduced projective subvariety
$X\subset\Chow_{m,D}(\PP^n)$ has an admissible dense smooth Zariski-open locus.
\end{lemma}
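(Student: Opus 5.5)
The plan is to pull the universal family back to $W$, decompose it generically into components, and resolve these in families, using generic smoothness at each step and shrinking $U$ finitely many times.

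\emph{Step 1: immersivity and the pulled-back family.} Since $W$ is reduced and we work over $\C$, the smooth locus $W_{\rm reg}$ is dense and Zariski open. The map $\kappa$ is finite onto its image, so it is generically finite. Hence its differential has maximal rank $\dim W$ on a nonempty Zariski-open set: $\kappa$ is generically unramified in characteristic zero, because its function-field extension is separable. We restrict to that set. Next, form the incidence correspondence $\mathcal Z\subset W\times\PP^n$, the pullback of the universal Chow family, with its reduced structure. This is a closed algebraic subset, and its fibers over $w$ are the supports $\supp C_{\kappa(w)}$. Decompose $\mathcal Z=\bigcup_k\mathcal Z_k$ into irreducible components and discard those not dominating $W$. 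Discarding them costs only a proper closed subset of $W$.

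\emph{Step 2: splitting into components.} For each dominant $\mathcal Z_k$, we pass to the generic fiber over the function field $K=\C(W)$. Its geometric irreducible components are defined over a finite extension $K'/K$. Let $W'\to W$ be the corresponding finite cover, and shrink so that it is étale. The components then become, locally in the analytic topology on $W$ (where the étale cover has local sections), families $\mathcal Z_i\to U$ whose fibers are irreducible, reduced, and pairwise distinct. Distinctness is open in a flat family once it holds generically. Here we use generic flatness and the fact that geometric irreducibility and reducedness of fibers are constructible properties holding at the generic point. The multiplicity $a_i$ of the component $\mathcal Z_{i,w}$ in $C_{\kappa(w)}$ is a constructible function, so it is constant on a dense open set.

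\emph{Step 3: resolution in families.} Take a resolution of singularities $e:\widetilde Z_i\to\mathcal Z_i$, which exists by Hironaka. By generic smoothness, the composite $\widetilde Z_i\to U$ is a smooth proper submersion over a dense open subset. Its fibers are connected because $\mathcal Z_{i,w}$ is irreducible, $e$ is birational, and Stein factorization applies after shrinking. Birationality of $e_{i,w}$ onto $\mathcal Z_{i,w}$ follows because $e$ is an isomorphism over a dense open $V\subset\mathcal Z_i$. Shrinking $U$ so that $V$ meets every fiber densely makes the complement a fiberwise proper analytic set, which gives the relative isomorphism loci required in \cref{def:admissible}. The identity \eqref{eq:generic-resolved-cycle} then holds on $U$ as equality of cycles, since both sides agree componentwise with multiplicities.

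\emph{The "in particular" and the main obstacle.} For the last assertion, apply the lemma with $W=X$ and $\kappa$ the inclusion, which is trivially finite. Then $U\subset X_{\rm reg}$ is admissible, because the immersion condition is automatic there. I expect the main obstacle to be Step 2. One must show that the Chow-theoretic cycle, defined via Chow forms rather than a flat family, actually matches the component decomposition of the reduced incidence variety with constant multiplicities. To handle this, I would use that on a dense open set the Chow form $R_{\kappa(w)}=\prod R_{\mathcal Z_{i,w}}^{a_i}$ factors continuously. Equivalently, the fundamental-class specialization agrees with the cycle generically (Kollár's well-defined families of cycles over a normal, here smooth, base).
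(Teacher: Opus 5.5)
Your outline is correct and has the same architecture as the paper's proof. That shared skeleton is: the generic point; a finite separable $K'/K$ splitting the geometric components; spreading out; resolution plus generic smoothness; Stein factorization for connected fibers; saturated shrinking to a finite \'etale cover $g:W'\to W$ with analytic inverse branches; and generic unramifiedness for immersivity.

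The difference is in what you decompose, and hence in how the multiplicities are controlled. You split the reduced support $\mathcal Z\subset W\times\PP^n$, so the $a_i$ must be recovered afterwards; this is exactly the obstacle you flag. The paper instead splits the generic cycle itself, i.e.\ the $K$-point of the Chow variety, over $K'$, so the $a_i$ are attached from the outset. The component Chow maps $\kappa_i$ are defined by pushing forward the resolved fundamental cycles. They then satisfy $\sum_i a_i\kappa_i=\kappa\circ g$ at the generic point, and separatedness of the Chow variety extends this to the whole integral open set in one line.

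Your flagged step closes by the same mechanism, and more cleanly than through a constructibility claim. Assume $w\mapsto[R_{\mathcal Z_{i,w}}]$ are morphisms on the irreducible base, and let $D_i=\deg\mathcal Z_{i,w}$. The $R_{\mathcal Z_{i,w}}$ are the distinct irreducible factors of $R_{\kappa(w)}$. Hence the base is covered by the finitely many coincidence loci $\{\kappa=\sum_i b_i\kappa_i\}$ with $b_i\ge1$ and $\sum_i b_iD_i=D$. Each locus is closed by separatedness, so irreducibility forces one of them to be everything. This makes the multiplicities constant on the whole open set, not just a dense open.

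Your route works only with honest subvarieties of $W\times\PP^n$ and never needs to read a non-closed Chow point as a cycle over $K$. The paper's route builds the multiplicities into the construction and needs no separate comparison.

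One ordering caveat: as written, Step 3 resolves the local analytic families. That only yields good sets that are analytic-Zariski-open on each piece, not a single Zariski-open $U\subset W$. Instead, do the Hironaka resolution, generic smoothness, and all shrinkings algebraically on $W'$, replacing each discarded closed $F\subset W'$ by the saturated open $W\setminus g(F)$. Only then pass to analytic inverse branches.
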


\begin{proof}
Let $K=\C(W)$ and consider the cycle over $\operatorname{Spec} K$ induced by $\kappa$.
After a finite separable extension $K'/K$, its geometrically irreducible reduced
components are defined over $K'$; denote them by $C_i$, with multiplicities
$a_i>0$.  Resolve each component projectively,
\[
  r_{i,\eta}:\widetilde C_i\longrightarrow C_i,
\]
and compose with $C_i\hookrightarrow\PP^n_{K'}$.  Let $g:W'\to W$ be the normalization of $W$ in $K'$.  Since $W$ is of finite
type over $\C$, it is Nagata, and normalization in the finite extension $K'/K$ is
finite; see
\cite[\href{https://stacks.math.columbia.edu/tag/035B}{Tag~035B};
\href{https://stacks.math.columbia.edu/tag/0AVK}{Tag~0AVK}]{Stacks}.
Thus $g$ is finite.  After replacing $W'$ by a nonempty open subset, the components, their resolutions, the
evaluation maps, and the birational isomorphism loci all spread out.  Generic flatness, together with openness of geometric reducedness and
irreducibility, lets us shrink further so that the component families have
integral $m$-dimensional fibers.  Removing the image of the nonsmooth locus of
the resolved families makes each $p_i$ smooth and projective.  Its generic
fiber is a resolution of a geometrically irreducible component, hence is
connected.  Shrinking the base if necessary, Stein factorization
\cite[\href{https://stacks.math.columbia.edu/tag/03H0}{Tag~03H0}]{Stacks} shows that the finite factor has degree one and
is therefore an isomorphism; all fibers of $p_i$ are then connected.  We further
arrange that the birational isomorphism loci are dense in every fiber.

Proper pushforward of the resolved fundamental cycles defines component Chow
maps $\kappa_i$.  Their weighted sum $\sum_i a_i\kappa_i$ agrees with $\kappa\circ g$ at the
generic point.  Since the Chow variety is separated, the two maps agree on the
chosen integral open set.  We remove the closed locus where distinct generic components collide.  If $F$
is the discarded subset of $W'$, replacing the base by the saturated open
$W\setminus g(F)$ preserves all preceding properties on the full inverse image.
Because $K'/K$ is separable, a further saturated shrinking makes
the finite map $W'\to W$ finite \'etale.  On sufficiently small simply connected
analytic neighborhoods it splits into holomorphic inverse branches; pulling
the resolved families back along one branch recovers the asserted local
presentation on the base itself.  A finite morphism in characteristic zero is generically unramified.  Intersecting
with the smooth locus of $W$ and with the full-rank locus of $d\kappa$ therefore
produces the required open set $U$.  Taking $W=X$ and $\kappa$ to be the
inclusion proves the last assertion.
\end{proof}

From now on, $B\subset X_{\rm reg}$ denotes an arbitrary admissible dense
smooth Zariski-open set.  Irreducibility of $X$ makes $B$ connected, and the
normalization is an isomorphism over $X_{\rm reg}$; we therefore identify $B$
with its inverse image in $Y$.

Convergence in a projective Chow space of fixed degree implies weak convergence
of integration currents; see \cite{Barlet75,BM25}.  Compactness of $\PP^n$
and \eqref{eq:measure} then show that
\[
  C\longmapsto\mu_C
\]
is continuous with respect to $W_2$.  It is also injective on effective cycles of fixed degree.  The support determines
the distinct irreducible components, and at a generic regular point of each
component the density relative to intrinsic volume recovers its integer
multiplicity once the total degree $D$ is fixed.

\subsection{H\"older Chow potentials}
Near the Chow boundary, the metric is controlled by a potential for $\Omega_B$
that remains regular on the compactification.  Its modulus of continuity comes
from a finite-dimensional logarithmic estimate.

Let $M$ be a compact connected complex manifold, $L\to M$ a holomorphic line
bundle with a smooth Hermitian metric, $d\lambda$ a smooth finite measure, and
$V\subset H^0(M,L)$ a finite-dimensional vector space.  Fix any norm on $V$.

\begin{lemma}\label{lem:negative}
For every compact $K\subset V\setminus\{0\}$ there exist $\eta>0$ and $C_K<\infty$
such that
\begin{equation}\label{eq:negative}
 \sup_{s\in K}\int_M\|s(x)\|^{-\eta}\,d\lambda(x)\le C_K.
\end{equation}
\end{lemma}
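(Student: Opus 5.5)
The plan is to reduce the estimate to a local, finite-dimensional statement about holomorphic functions and then invoke the standard uniform integrability of $|f|^{-\eta}$ (the log canonical threshold / \L{}ojasiewicz-type argument) together with a compactness argument in $K$.

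\textbf{Step 1: localization.} Cover $M$ by finitely many coordinate polydiscs $\Delta_\alpha$, with slightly smaller relatively compact polydiscs $\Delta'_\alpha$ still covering $M$, on which $L$ is trivialized by nonvanishing holomorphic frames $e_\alpha$. On $\overline{\Delta'_\alpha}$ the metric satisfies $c\le\|e_\alpha\|\le c^{-1}$, and $d\lambda\le C\,d\mathrm{Leb}$. Writing $s=f_{s,\alpha}e_\alpha$, it then suffices to bound $\int_{\Delta'_\alpha}|f_{s,\alpha}|^{-\eta}\,d\mathrm{Leb}$ uniformly for $s\in K$. The map $s\mapsto f_{s,\alpha}$ is linear from $V$ into $\mathcal O(\Delta_\alpha)$, hence continuous. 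Since $M$ is connected and $s\neq0$, $f_{s,\alpha}\not\equiv0$ on every chart, by the identity theorem.

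\textbf{Step 2: uniform integrability near a fixed section.} Fix $s_0\in K$ and a point $x_0\in\overline{\Delta'_\alpha}$. I would use the classical semicontinuity of integrability thresholds: if $f_0\not\equiv0$ is holomorphic near $x_0$, there exist $\eta_0>0$ and neighborhoods $W\ni x_0$ and $\mathcal U\ni f_0$ (in the sup-norm on a slightly larger ball) such that
\[
\sup_{f\in\mathcal U}\int_W|f|^{-\eta_0}<\infty .
\]
This follows, for example, from Demailly--Koll\'ar semicontinuity of complex singularity exponents. More elementarily, one can proceed as follows. After a linear change of coordinates, Weierstrass preparation makes $f_0$ regular of order $k$ in $z_n$. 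By Hurwitz/Rouch\'e, every nearby $f$ is then $u\cdot P$ with $u$ a unit bounded away from $0$ and $P$ a monic polynomial of degree $k$ in $z_n$. For such $P=\prod(z_n-r_j)$, the slice integral $\int_{|z_n|<\rho}|P|^{-\eta}$ is bounded uniformly in the roots whenever $\eta k<2$, by H\"older's inequality applied to $\prod|z_n-r_j|^{-\eta}$ with exponents $k$. Fubini then gives the bound. Since $V$ is finite-dimensional, a small neighborhood of $s_0$ in $V$ maps into $\mathcal U$.

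\textbf{Step 3: compactness.} Cover the compact set $K\times\bigcup_\alpha\overline{\Delta'_\alpha}$ by finitely many products (neighborhood of $s_0$)$\times W$ furnished by Step 2. Take $\eta$ to be the minimum of the finitely many $\eta_0$'s, using $|f|^{-\eta}\le1+|f|^{-\eta_0}$ for $\eta\le\eta_0$. Summing the resulting bounds gives \eqref{eq:negative}.

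The main obstacle is Step 2: obtaining uniformity in $f$, not just integrability for a single $f$. I would carry it out via Weierstrass preparation and the one-variable H\"older bound. The key point is that the Weierstrass degree $k$ is locally constant under small perturbations, which is what yields a uniform exponent $\eta<2/k$.
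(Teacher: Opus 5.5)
Your proposal is correct and follows essentially the same route as the paper. At each point of $K\times M$ you use parameter-dependent Weierstrass preparation, apply H\"older's inequality to $\prod_j|z_n-r_j|^{-\eta}$ when $k\eta<2$, integrate over the remaining variables by Fubini, and take a finite cover of the compact set $K\times M$ to fix one exponent. The only cosmetic difference is that the paper treats the points where $s_0(x_0)\neq0$ separately, while your $k=0$ case absorbs them.
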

\begin{proof}
Fix $(s_0,x_0)\in K\times M$.  In a local holomorphic frame, either
$s_0(x_0)\ne0$ or, after a linear change of coordinates, the restriction of the
nonzero germ $s_0$ to the $z_1$-axis has some finite positive vanishing order
$a$.  Taking the coefficients of $s$ and the remaining variables $z'$ as
parameters, the parameter-dependent Weierstrass preparation theorem provides, on a
smaller polydisc, the factorization
\[
 s(z)=u(s,z)P_{s,z'}(z_1),
\]
where $u$ is a holomorphic unit and $P_{s,z'}$ is monic of degree $a$ in $z_1$.
On a smaller neighborhood, $u$ is uniformly bounded above and away from zero,
while the coefficients of $P_{s,z'}$ remain in a compact set.  For each fixed
$(s,z')$, write the multiset of roots of $P_{s,z'}$ as
$\{\zeta_1,\ldots,\zeta_a\}$, counted with multiplicity.  Their moduli are uniformly bounded by the coefficients,
and, if $a\eta<2$, H\"older's inequality in the $z_1$ variable shows that
\[
 \int_{|z_1|<r}\prod_{j=1}^a|z_1-\zeta_j|^{-\eta}\,dA(z_1)
 \le\prod_{j=1}^a\left(\int_{|z_1|<r}|z_1-\zeta_j|^{-a\eta}\,dA(z_1)\right)^{1/a}
 \le C.
\]
The bound is uniform as long as the roots stay in a fixed bounded set.  After
integrating in $z'$ and absorbing the bounded frame and volume factors, the same
uniformity remains.  A finite cover of $K\times M$ then supplies one sufficiently
small exponent $\eta>0$ valid throughout.  On the nonvanishing neighborhoods the section is uniformly
bounded away from zero, so the same estimate holds there for every $\eta>0$.
\end{proof}

Define the uniform complex singularity exponent of the finite-dimensional linear
system $V$ by
\begin{equation}\label{eq:uniform-cse}
 c_V:=\inf_{([s],x)\in\PP(V)\times M}c_x(s),
\end{equation}
where $c_x(s)$ is the complex singularity exponent, namely the supremum of $c>0$
for which $\|s\|^{-2c}$ is locally integrable near $x$.  The preceding lemma shows that $c_V>0$.  By the semicontinuity theorem of
Demailly--Koll\'ar, for every $0<c<c_V$ and every compact set of normalized
nonzero sections,
\begin{equation}\label{eq:uniform-cse-integral}
 \sup_s\int_M\|s\|^{-2c}\,d\lambda<\infty.
\end{equation}
Local $L^1$ stability below the complex singularity exponent is uniform near each
$([s],x)$; compactness of $\PP(V)\times M$ reduces the argument to finitely many
such neighborhoods; see \cite{DK01}.

\begin{proposition}\label{prop:logholder}
The function
\begin{equation}\label{eq:Psi}
 \Psi(s)=\int_M\log\|s(x)\|\,d\lambda(x),\qquad s\ne0,
\end{equation}
is finite and locally H\"older continuous.  More precisely, on every compact
coefficient set away from zero it is $C^{0,\alpha}$ for every
\begin{equation}\label{eq:alpha-cv}
 0<\alpha\le1,\qquad \alpha<2c_V.
\end{equation}
In particular, it is locally Lipschitz whenever $c_V>1/2$.  If
$d\lambda(M)=1$, it satisfies $\Psi(cs)=\log|c|+\Psi(s)$.  For holomorphic
coefficient maps $s(a)\ne0$, $\Psi(s(a))$ is plurisubharmonic.
\end{proposition}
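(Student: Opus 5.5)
Finiteness comes straight from \cref{lem:negative}. For fixed $s\ne0$, $\log\|s\|\le C$ because $M$ is compact and the metric is smooth. Below we use $\log t\ge -\eta^{-1}t^{-\eta}$, so that $\log\|s\|\ge -\eta^{-1}\|s\|^{-\eta}$. By \eqref{eq:negative}, $\log\|s\|$ is therefore integrable, with a bound that is uniform on compact $K\subset V\setminus\{0\}$.

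The homogeneity identity is immediate. We have $\|cs(x)\|=|c|\,\|s(x)\|$, and integrating against a probability measure gives $\Psi(cs)=\log|c|+\Psi(s)$.

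For plurisubharmonicity, let $a\mapsto s(a)$ be holomorphic. Locally $\log\|s(a)(x)\|=\log|f_a(x)|+\varphi(x)$, where $f_a$ is holomorphic in $a$ and $\varphi$ is the smooth weight of the metric. Hence $a\mapsto\log\|s(a)(x)\|$ is psh for each $x$. It is also bounded above locally uniformly in $(a,x)$. Integrating in $x$ preserves the sub-mean-value inequality, by Fubini together with Fatou applied to the upper bound. Upper semicontinuity of $\Psi\circ s$ follows from the continuity just proved below, or alternatively from Fatou. Finiteness then shows that $\Psi\circ s\not\equiv-\infty$.

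The main step is the Hölder estimate. Fix a compact $K\subset V\setminus\{0\}$ and enlarge it slightly to a compact neighbourhood $K'$. Take $s,t\in K$ with $\delta=\|s-t\|$ small, and set $u=s-t$. Pointwise we have $\|u(x)\|\le C\delta$, since evaluation is bounded on the finite-dimensional space $V$. Pick $0<c<c_V$ with $\alpha\le 2c$ and $\alpha\le 1$. Consider the real-valued function
\[
 \Psi(t)-\Psi(s)=\int_M\log\frac{\|t\|}{\|s\|}\,d\lambda .
\]
By symmetry it suffices to bound $\int_M\bigl|\log\|t\|-\log\|s\|\bigr|\,d\lambda$.

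Split $M$ into the good set $G=\{\|s\|\ge 2C\delta^{\theta}\}$ and its complement, for a parameter $\theta\in(0,1)$ to be chosen. On $G$ we have $\|t\|\ge\|s\|-C\delta\ge\|s\|/2$, so
\[
 \bigl|\log\|t\|-\log\|s\|\bigr|\le 2C\delta/\|s\|\le \delta^{1-\theta}.
\]
On the bad set $M\setminus G$, both $\|s\|$ and $\|t\|$ are $\lesssim\delta^\theta$. Then
\[
 \int_{M\setminus G}\bigl(|\log\|s\||+|\log\|t\||\bigr)
\]
is bounded using Hölder's inequality and the uniform integrability \eqref{eq:uniform-cse-integral} of $\|s\|^{-2c}$ over $K'$. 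The Chebyshev bound gives $\lambda(\|s\|<\varepsilon)\le C'\varepsilon^{2c}$. Integrating the logarithm over this sublevel set via the layer-cake formula yields $O(\varepsilon^{2c-\epsilon'}\log(1/\varepsilon))$.

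For $t$ the same argument applies, with the sublevel set $\{\|t\|\lesssim\delta^\theta\}$ controlled by the uniform bound for $t\in K'$. The result is an estimate $|\Psi(s)-\Psi(t)|\lesssim\delta^{1-\theta}+\delta^{2c\theta}\log(1/\delta)$.

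This gives an exponent $\min(1-\theta,2c\theta)$, which is strictly less than $\alpha$ as stated. So the splitting must be refined, and this is the obstacle I expect. I would avoid losing the factor $\theta$ with a dyadic decomposition. On $\{\|s\|\sim 2^{-k}\}$ with $2^{-k}\ge 2C\delta$, the difference is $\le C\delta 2^{k}$, and this shell has measure $\lesssim 2^{-2ck}$. Summing, $\sum_{2^{-k}\gtrsim\delta}\delta\,2^{k(1-2c)}$ is $\lesssim\delta^{\min(1,2c)}$, up to a logarithm when $2c=1$. On the remaining region $\{\|s\|\lesssim\delta\}$, the two logarithms have integral $O(\delta^{2c}\log(1/\delta))$.

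Both contributions are $O(\delta^{\alpha})$ once $c$ is chosen strictly between $\alpha/2$ and $c_V$; the slack absorbs the logarithm. This gives $C^{0,\alpha}$ for all $\alpha\le1$ with $\alpha<2c_V$. When $c_V>1/2$, we may take $2c>1$, and then the dyadic sum is $O(\delta)$ with no logarithm, which gives the Lipschitz case.
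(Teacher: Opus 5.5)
Your proof is correct, but the H\"older step takes a different route from the paper's.

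The paper does not decompose $M$ at all. It integrates the pointwise inequality \eqref{eq:log-difference}, $|\log r-\log q|\le\alpha^{-1}|r-q|^\alpha(r^{-\alpha}+q^{-\alpha})$, with $r=\|s(x)\|$ and $q=\|t(x)\|$. This gives $|\Psi(s)-\Psi(t)|\le C\|s-t\|^\alpha\sup_s\int_M\|s\|^{-\alpha}\,d\lambda$, and the negative moment is finite because $\alpha<2c<2c_V$. The Lipschitz case is simply $\alpha=1$ with $1/2<c<c_V$: there are no logarithms to absorb and no separate near-zero region.

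You instead use only the distribution bound $\lambda\{\|s\|<r\}\lesssim r^{2c}$, obtained by Chebyshev from the $2c$-moment. The dyadic shells where $\|s\|\gtrsim\delta$ contribute $\delta^{\min(1,2c)}$. The region $\{\|s\|\lesssim\delta\}\subset\{\|t\|\lesssim\delta\}$ is handled by layer cake. The stray $\log(1/\delta)$ factors are absorbed by choosing $2c>\alpha$. The preliminary $\theta$-splitting in your write-up is superseded and can be deleted.

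Your version is longer but more robust. It is essentially the truncation argument the paper switches to in \cref{lem:hyp-critical-potential} to reach the endpoint exponent $2/d$, where the critical moment diverges. If, on the near-zero region, you bound $|\log\|s\|-\log\|t\||$ by $\log(3C\delta/\|s\|)+\log(3C\delta/\|t\|)$ instead of estimating each logarithm separately, the logarithmic loss disappears. A sublevel estimate of order $\kappa<1$ then yields exactly $C^{0,\kappa}$, which the moment argument cannot reach.

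Your plurisubharmonicity argument is also valid: the sub-mean-value inequality via Tonelli using the uniform upper bound, with upper semicontinuity taken from the continuity already proved. It replaces the paper's passage through lower truncations and the locally uniform convergence of their averages.
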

\begin{proof}
Fix a compact coefficient neighborhood disjoint from the origin.  By
\cref{lem:negative},
\[
 \lambda\{x:\|s(x)\|<r\}\le Cr^\eta
\]
uniformly on that neighborhood.  The lower logarithmic tail satisfies
\begin{equation}\label{eq:tail}
 \int_{\|s\|<a}\log\frac a{\|s\|}\,d\lambda
 =\int_0^a\lambda\{\|s\|<r\}\,\frac{dr}{r}\le C'a^\eta.
\end{equation}
The tail estimate shows that $\Psi$ is finite and that the logarithmic tail is
locally uniform.

For $r,q>0$ and $0<\alpha\le1$,
\begin{equation}\label{eq:log-difference}
 |\log r-\log q|
 \le \frac1\alpha |r-q|^\alpha\bigl(r^{-\alpha}+q^{-\alpha}\bigr).
\end{equation}
For $r\ge q$, this follows from
$\log(1+u)\le u^\alpha/\alpha$ with $u=(r-q)/q$; the case $q\ge r$ is symmetric.
Finite dimensionality implies
\[
 \sup_M\bigl|\|s\|-\|t\|\bigr|\le C\|s-t\|.
\]
Let $\alpha$ satisfy \eqref{eq:alpha-cv}.  Choose
$c<c_V$ with $\alpha<2c$.  After normalizing the coefficient vectors (which changes the estimates only by
bounded factors on the fixed compact coefficient set), the uniform integrability
statement \eqref{eq:uniform-cse-integral}, together with
$r^{-\alpha}\le 1+r^{-2c}$, implies that
\[
 \sup_s\int_M\|s\|^{-\alpha}\,d\lambda<\infty.
\]
Because the zero set of a nonzero holomorphic section has $\lambda$-measure zero,
we may integrate \eqref{eq:log-difference} to obtain
\[
 |\Psi(s)-\Psi(t)|\le C_\alpha\|s-t\|^\alpha.
\]
When $c_V>1/2$, the same argument applies with $\alpha=1$ after choosing
$1/2<c<c_V$.

For fixed $x$, the map $a\mapsto\log\|s(a)(x)\|$ is plurisubharmonic, since
the metric factor is independent of $a$, and the same holds for its lower
truncations.  By \eqref{eq:tail}, the averages converge locally uniformly.  Their limit
$\Psi(s(a))$ is therefore finite and plurisubharmonic; the scaling relation is
immediate from the definition.  Repeating the argument on the fixed compact
coefficient set makes the modulus uniform across the finite-dimensional family.
\end{proof}

\subsection{Chow current and incidence formula}
Let
\[
 G=\Gr(n-m,\C^{n+1}),\qquad \ell=\dim_\C G,
\]
so that a point of $G$ represents a projective $(n-m-1)$-plane.  Let $d\nu_G$ be
the unique $U(n+1)$-invariant probability volume form on $G$, and equip the
Pl\"ucker line bundle $\mathcal O_G(1)$ with any smooth invariant Hermitian metric.  A
cycle $C$ of dimension $m$ and degree $D$ has a Chow form
\[
 R_C\in V_{\rm Ch}:=H^0(G,\mathcal O_G(D)),
\]
well defined up to scalar; its zero divisor consists of the planes meeting $C$,
with the cycle multiplicities.  The assignment $C\mapsto[R_C]$ defines the classical Chow form embedding
$\chi:\Chow_{m,D}(\PP^n)\hookrightarrow\PP(V_{\rm Ch})$; see \cite{GKZ} and the recent
review \cite{PS25}.

Apply \cref{prop:logholder} with $M=G$.  On a projective coefficient chart, choose
a holomorphic representative $s(a)$ and set
\begin{equation}\label{eq:Chowpotential}
 \Psi_{\rm Ch}(a)=\int_G\log\|s(a)(L)\|_h\,d\nu_G(L).
\end{equation}
On overlaps, two such potentials differ by $\log|g(a)|$ for a nowhere-vanishing
holomorphic function $g$, so their difference is pluriharmonic.  The local potentials
patch to a global positive closed current $\cT$ on $\PP(V_{\rm Ch})$,
with locally H\"older potentials.
Logarithmic Chow norms and the corresponding incidence formulas are classical;
see \cite[Section~4]{PS02}.  To fix the normalization, consider the incidence
correspondence
\[
 I=\{(x,L)\in\PP^n\times G:x\in L\},\qquad
 a:I\to\PP^n,\quad b:I\to G.
\]

\begin{lemma}\label{lem:crofton}
With $\int_{\PP^1}\omega=1$ and $\int_G d\nu_G=1$,
\begin{equation}\label{eq:crofton}
 a_*b^*(d\nu_G)=\omega^{m+1}.
\end{equation}
\end{lemma}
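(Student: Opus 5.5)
Let $\Theta:=a_*b^*(d\nu_G)$. The plan is to show that $\Theta$ is a smooth closed $U(n+1)$-invariant $(m+1,m+1)$-form on $\PP^n$, which forces $\Theta=c\,\omega^{m+1}$, and then to find $c$ by pairing with a linear subspace.

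\emph{Type and regularity.} The correspondence $I\to G$ is a $\PP^{n-m-1}$-bundle, and $I\to\PP^n$ is a $\Gr(n-m-1,\C^n)$-bundle. Both maps are smooth, proper, submersive and $U(n+1)$-equivariant. Hence $b^*d\nu_G$ is a smooth closed form of bidegree $(\ell,\ell)$ on $I$. Fiber integration along $a$ lowers the degree by twice the complex fiber dimension $\dim I-n$. Since $\dim I=\ell+(n-m-1)$, we get
\[
2(\ell+n-m-1)-2(\dim I-n)=2(m+1).
\]
So $\Theta$ is a smooth closed real $(m+1,m+1)$-form. It is invariant because $\nu_G$ and both projections are invariant.

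\emph{Invariance forces proportionality.} The isotropy group $U(1)\times U(n)$ at a point acts on the tangent space $\C^n$ through its standard representation. The invariant $(k,k)$-forms on $\C^n$ under $U(n)$ are spanned by the $k$-th power of the Kähler form. Hence $\Theta=c\,\omega^{m+1}$ pointwise, and by transitivity $c$ is a global constant.

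\emph{Normalization.} Let $\Lambda\cong\PP^{m+1}\subset\PP^n$ be a linear subspace. Then $\int_\Lambda\omega^{m+1}=1$. By the projection formula,
\[
\int_\Lambda\Theta=\int_{a^{-1}(\Lambda)}b^*d\nu_G=\int_G \#(L\cap\Lambda)\,d\nu_G(L).
\]
This equality holds because $a^{-1}(\Lambda)\to G$ is generically one-to-one. A generic projective $(n-m-1)$-plane $L$ meets an $(m+1)$-plane $\Lambda$ in exactly one point, since $(n-m-1)+(m+1)-n=0$. The locus of nongeneric $L$ has measure zero.

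The step needing care is the orientation and degree bookkeeping: checking that $b|_{a^{-1}(\Lambda)}$ has degree exactly $+1$ as a map between equidimensional complex spaces. Here $\dim a^{-1}(\Lambda)=(m+1)+\dim\Gr(n-m-1,\C^n)=\ell$. The map is holomorphic, so the degree is positive, and a single transverse intersection gives degree $1$. This yields $\int_\Lambda\Theta=\int_G d\nu_G=1$, so $c=1$ and $\Theta=\omega^{m+1}$.

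An alternative is to work in cohomology: $a_*b^*$ sends the point class of $G$ to the class of the Schubert cycle of points lying on some $L$ through a fixed plane, which is a hyperplane-power dual class. This route still needs invariance to upgrade from a cohomological identity to a pointwise identity of forms.
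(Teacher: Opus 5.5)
Your proof is correct. Its first half (equivariance of $a,b$, fiber integration producing a smooth invariant $(m+1,m+1)$-form, and the isotropy argument forcing $\Theta=c\,\omega^{m+1}$) is the paper's argument with more detail supplied. The genuine difference is in how you fix the constant $c$.

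The paper pairs with $\omega^{n-m-1}$ over all of $\PP^n$ and then integrates along the fibers of $b$ rather than $a$:
\[
 c=\int_I b^*(d\nu_G)\wedge a^*\omega^{n-m-1}
  =\int_G\Bigl(\int_L\omega^{n-m-1}\Bigr)\,d\nu_G(L)=1.
\]
This needs no genericity at all, because every fiber of $b$ is a full $\PP^{n-m-1}$ with $\int_L\omega^{n-m-1}=1$.

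You instead test against a fixed $\Lambda\cong\PP^{m+1}$, which is Poincar\'e dual to $\omega^{n-m-1}$. This is the classical Crofton count, and it shows that $\omega^{m+1}$ is the density of $(n-m-1)$-planes meeting an $(m+1)$-dimensional region. The cost is an area-formula/degree argument for $b|_{a^{-1}(\Lambda)}$, with three ingredients:
\begin{itemize}
\item the planes with $\dim(L\cap\Lambda)\ge1$ form a proper Schubert subvariety of measure zero;
\item any $L$ with $L\cap\Lambda=\{x\}$ is a regular value, since the intersection of linear spaces is reduced and hence $T_xL\cap T_x\Lambda=0$;
\item each preimage counts with sign $+1$, since the map is holomorphic.
\end{itemize}
You identify all of these, so the argument closes.

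One slip to fix: the displayed degree count $2(\ell+n-m-1)-2(\dim I-n)$ equals $2n$, not $2(m+1)$. Since $b^*d\nu_G$ has degree $2\ell$, it should read $2\ell-2(\dim I-n)=2\ell-2(\ell-m-1)=2(m+1)$.
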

\begin{proof}
By $U(n+1)$-invariance, the left-hand side must equal
$c\,\omega^{m+1}$ for some constant $c$.  Pairing with $\omega^{n-m-1}$ and using
Fubini on the incidence variety reduces the expression to
\[
 c\int_{\PP^n}\omega^n
 =\int_I b^*(d\nu_G)\wedge a^*\omega^{n-m-1}
 =\int_G\left(\int_L\omega^{n-m-1}\right)d\nu_G(L)=1.
\]
Both projective integrals equal one under our normalization, so $c=1$.
\end{proof}

\begin{proposition}\label{prop:curvature}
Let $B\subset X_{\rm reg}$ be admissible.  On a local resolved presentation as in
\cref{def:admissible}, set
\[
  dV_{i,b}:=\frac{(e_{i,b}^*\omega)^m}{m!}
\]
and define
\begin{equation}\label{eq:WP}
  \Omega_B
  :=\frac1{DV_m}\sum_i a_i(p_i)_*
       \frac{e_i^*\omega^{m+1}}{(m+1)!}.
\end{equation}
The forms defined in \eqref{eq:WP} are independent of the resolved presentation and
glue to a smooth K\"ahler form on $B$, with
\begin{equation}\label{eq:curvature}
  \chi^*\cT=\sum_i a_i(p_i)_*e_i^*\omega^{m+1},
  \qquad
  \Omega_B=\frac1{D(m+1)}\chi^*\cT.
\end{equation}
If $g_B(v,w):=\Omega_B(v,Jw)$, then for every real $v\in T_bB$,
\begin{equation}\label{eq:g}
  g_B(v,v)=\frac1{DV_m}\sum_i a_i
  \int_{\widetilde Z_{i,b}}|N_{i,v}|^2\,dV_{i,b},
\end{equation}
where $N_{i,v}$ is the normal component of the resolved infinitesimal motion on
the locus of full rank, extended by zero on the null set where the rank drops.  For every
absolutely continuous curve $b:[0,T]\to B$,
\begin{equation}\label{eq:speed}
  |\dot\mu_{b(t)}|_{W_2}^2
  =g_B(\dot b(t),\dot b(t))
  \quad\text{for a.e. }t.
\end{equation}
For the Riemannian length distance $d_B$ of $g_B$,
\begin{equation}\label{eq:lower}
  W_2(\mu_b,\mu_c)\le d_B(b,c).
\end{equation}
\end{proposition}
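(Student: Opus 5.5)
I would prove the statements in the order: incidence formula, independence and positivity, the metric identity, and finally the speed identity and the Wasserstein lower bound.

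\emph{Incidence formula.} Work on a local resolved presentation over $U\subset B$. Since $R_{C_b}$ is the product of the Chow forms of the components, each raised to the power $a_i$, the potential $\Psi_{\rm Ch}\circ\chi$ splits as $\sum_i a_i\int_G\log\|R_{i,b}\|\,d\nu_G$. For fixed $b$, the Poincar\'e--Lelong formula gives $dd^c\log\|R_{i,b}\|_h=[\{L:L\cap e_i(\widetilde Z_{i,b})\neq\emptyset\}]-\omega_h$. Averaging over $G$ then expresses $dd^c_b$ of this potential through the incidence correspondence. Pull $I$ back to $\widetilde Z_i$ and set $I_i=\{(z,L):e_i(z)\in L\}\subset\widetilde Z_i\times G$. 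Fiber integration and \cref{lem:crofton} then yield
\[
\chi^*\cT=\sum_i a_i(p_i)_*e_i^*a_*b^*d\nu_G=\sum_i a_i(p_i)_*e_i^*\omega^{m+1}.
\]
The smooth term coming from $\omega_h$ integrates to a constant and drops out. The rigorous justification is to check the formula in the weak sense against test forms on $U$. The H\"older continuity from \cref{prop:logholder} allows $dd^c$ to be interchanged with the $G$-average, by approximating with lower truncations. The birational isomorphism loci have full measure, so the components' rank-drop sets do not contribute. Dividing by $D(m+1)$ and using $DV_m(m+1)!=D(m+1)$ gives \eqref{eq:curvature}. Independence of the presentation and gluing follow at once, because the right-hand side $\chi^*\cT$ is intrinsic.

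\emph{Positivity and the metric formula.} I would compute $(p_i)_*e_i^*\omega^{m+1}$ pointwise. Choose a horizontal lift $\tilde v$ of $v$ and split $de_i(\tilde v)$ into its tangential and normal components relative to $de_i(T\widetilde Z_{i,b})$ on the full-rank locus. Contracting $\omega^{m+1}$ with $(\tilde v,J\tilde v)$ and restricting to the fiber gives $(m+1)|N_{i,v}|^2\,m!\,dV_{i,b}$. The tangential component contributes only an exact term, which integrates to zero over the closed fiber. This yields \eqref{eq:g}. Positive definiteness comes from the immersivity of $\chi$ in \cref{lem:generic-resolved}. If $N_{i,v}\equiv0$ for all $i$, the cycle does not move to first order, which forces $v=0$. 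Smoothness is clear from smooth fiber integration.

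\emph{Speed identity and lower bound.} This is the step I expect to be the main obstacle, specifically the lower bound. The measures $\mu_{b(t)}$ are push-forwards of weighted volume forms by smooth families of maps. A normal vector field $N_t$, extended suitably off the image, solves the continuity equation $\partial_t\mu+\operatorname{div}(N\mu)=0$. The tangential part of the motion is absorbed because the measure is volume. By \cite{AGS}, this gives $|\dot\mu|_{W_2}^2\le\int|N|^2d\mu=g_B$.

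For equality, I must show that $N$ is the minimal-norm velocity, that is, that it lies in the closure of $\{\nabla\varphi\}$ in $L^2(\mu)$. The plan is to realize $N$ as a limit of ambient gradients. The normal field on a smooth piece of a component can be approximated by $\nabla\varphi$ for $\varphi$ defined in a tubular neighborhood, linear in the normal directions. One then cuts off near the singular and intersection loci, which are proper analytic sets of measure zero, using capacity-zero cutoffs; these exist because the bad sets have real codimension at least two in each component. Multiplicity is harmless, because coinciding sheets move with the same normal velocity.

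Integrating the speed identity along curves gives \eqref{eq:lower}, since $W_2$ is bounded by the length of any absolutely continuous curve joining the two points.
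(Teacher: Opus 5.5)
Your proposal is correct and follows essentially the paper's route: Poincar\'e--Lelong for the Chow potential together with the incidence pullbacks $I_i$ and \cref{lem:crofton} for \eqref{eq:curvature}, pointwise evaluation on the full-rank locus for \eqref{eq:g}, the first variation (continuity equation) plus tubular-neighborhood gradient approximation of $N$ to identify the minimal velocity, immersivity of $\chi|_B$ for definiteness, and integration along curves for \eqref{eq:lower}. A few points need tightening. Poincar\'e--Lelong must be applied in the $b$-variable (for fixed $L$, or jointly on $U\times G$, where the curvature term dies against the top-degree form $d\nu_G$), not on $G$ for fixed $b$ as you wrote. In \eqref{eq:g} the tangential part of $de_i(\tilde v)$ contributes zero pointwise by a dimension count, not an exact term. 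Finally, ``the cycle does not move to first order'' needs the paper's mechanism: $\dot R_i$ vanishes on the reduced irreducible Chow divisor of $C_{i,b}$, hence is divisible by $R_i$, hence $\dot R_i=\lambda_iR_i$ by compactness of $G$, so $d\chi(v)=0$.
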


\begin{proof}
Work on one analytic neighborhood carrying a resolved presentation.  Let
$p_B,p_G$ denote the projections from $B\times G$, and let
$\mathcal D\subset B\times G$ be the pullback of the universal Chow divisor.
For a local Chow form, Poincar\'e--Lelong takes the form
\begin{equation}\label{eq:PLpush}
  dd^c_B\Psi_{\rm Ch}
  =(p_B)_*\bigl([\mathcal D]\wedge p_G^*d\nu_G\bigr);
\end{equation}
the curvature term of the Hermitian metric on $\mathcal O_G(D)$ has too high a vertical
bidegree after wedging with the top-degree form $d\nu_G$.  For each resolved
component, form the incidence pullback
\[
  I_i=\widetilde Z_i\times_{\PP^n}I
\]
and let
\[
  r_i:I_i\longrightarrow B\times G,
  \qquad
  r_i\bigl(u,(e_i(u),L)\bigr)=(p_i(u),L)
\]
be the induced proper map.  Over the generic incidence locus the birational
isomorphism locus of $e_{i,b}$ shows that $r_i$ has degree one onto the
component Chow divisor; the coefficient $a_i$ records precisely the cycle
multiplicity.  Functoriality of proper pushforward of relative cycles identifies
\[
  [\mathcal D]=\sum_i a_i(r_i)_*[I_i]
\]
as cycles on $B\times G$.  Fubini and \cref{lem:crofton} imply
\[
  \chi^*\cT
  =\sum_i a_i(p_i)_*e_i^*\bigl(a_*b^*d\nu_G\bigr)
  =\sum_i a_i(p_i)_*e_i^*\omega^{m+1}.
\]
Since $V_m=1/m!$,
\[
  \frac1{DV_m(m+1)!}=\frac1{D(m+1)},
\]
which proves \eqref{eq:curvature}.  This calculation also shows that the local
forms \eqref{eq:WP} glue independently of the chosen presentation.

On the locus of full rank of a resolved fiber, split the infinitesimal motion into
tangential and normal parts.  Complex tangent and normal subspaces are
$\omega$-orthogonal, and evaluation of $e_i^*\omega^{m+1}/(m+1)!$ on
$v,Jv$ and vertical vectors equals
$|N_{i,v}|^2(e_{i,b}^*\omega)^m/m!$.  The complement is a proper analytic set
and has zero $dV_{i,b}$-measure, proving \eqref{eq:g}.  A fiberwise Cartan
calculation also gives, for every $\phi\in C^\infty(\PP^n)$,
\begin{equation}\label{eq:firstvariation-B}
  d\!\left(\int\phi\,d\mu_b\right)(v)
  =\frac1{DV_m}\sum_i a_i
    \int_{\widetilde Z_{i,b}}d\phi(N_{i,v})\,dV_{i,b}.
\end{equation}
The tangential terms cancel.  Remove small neighborhoods of the exceptional set,
the set where the rank drops, and the component intersections.  Tubular neighborhoods of
the remaining compact regular images then approximate $N_v$ in $L^2(\mu_b)$ by
ambient gradients.  The approximation places $N_v$ in the Wasserstein tangent
space.  Equation \eqref{eq:firstvariation-B} identifies it as the unique minimal-norm
velocity representing this infinitesimal variation.

Suppose \eqref{eq:g} vanishes.  Then every component normal motion vanishes, so
at a general incidence pair the corresponding component Chow divisor has zero
normal first-order displacement.
If $R_i$ is a local component Chow form, its derivative is divisible by
$R_i$; since both have the same degree on the compact Grassmannian, the quotient
is constant.  Each component Chow point therefore has zero derivative, and so does
the weighted total Chow point.  The inclusion $B\hookrightarrow X\hookrightarrow
\Chow_{m,D}(\PP^n)$ is an immersion, which forces $v=0$.  Smoothness follows directly
from fiber integration in \eqref{eq:WP}.

Let $b(t)$ be absolutely continuous and put
$v(t)=\sqrt{g_B(\dot b(t),\dot b(t))}$.  The fields
$N_t=N_{\dot b(t)}$ are Borel, satisfy the continuity equation by
\eqref{eq:firstvariation-B}, and have $L^2(\mu_t)$ norm $v(t)\in L^1(0,T)$.
Set
\[
  \beta(t)=\int_0^t(1+v(r))\,dr,
  \qquad \tau=\beta^{-1}.
\]
For $\widetilde\mu_s=\mu_{\tau(s)}$, the rescaled minimal velocity has norm
$v(\tau(s))/(1+v(\tau(s)))$, and
\[
 \int_0^{\beta(T)}
 \left(\frac{v(\tau(s))}{1+v(\tau(s))}\right)^2ds
 =\int_0^T\frac{v(t)^2}{1+v(t)}\,dt
 \le\int_0^T v(t)\,dt<\infty.
\]
The $AC^2$ minimal-velocity characterization in Wasserstein space identifies the
metric speed of $\widetilde\mu$ with this norm.  Applying the metric chain rule
to $s=\beta(t)$ returns
$|\dot\mu_t|_{W_2}=v(t)$ almost everywhere, which is \eqref{eq:speed}.
Integration along curves then gives \eqref{eq:lower}.  We use the formulation for
compact Riemannian manifolds of the continuity-equation and minimal-velocity
characterizations in \cite[Theorem~2.4 and Proposition~2.6]{ACL}, together with
the metric framework of \cite[Chapter~8]{AGS}.
\end{proof}

At the Chow boundary, compatible representatives satisfy
$R_{\sum a_iC_i}=\prod_iR_{C_i}^{a_i}$.  The potential \eqref{eq:Chowpotential}
therefore records the cycle multiplicities while retaining a uniform H\"older
modulus.

\begin{corollary}\label{cor:smooth-cycle-specialization}
Suppose, in addition, that every cycle parametrized by an admissible locus $B$
is smooth, reduced, and embedded.  Then the tautological relative cycle has a
canonical smooth projective subscheme representative
\[
  \pi:\cZ\longrightarrow B,
  \qquad \cZ\subset B\times\PP^n,
\]
and \eqref{eq:g} reduces to
\[
  g_{B,b}(v,w)=\frac1{DV_m}
  \int_{Z_b}g_{\mathrm{FS}}(N_v,N_w)\,d\Vol_{Z_b}.
\]
For smooth embedded cycles this is precisely the usual Wasserstein metric.
\end{corollary}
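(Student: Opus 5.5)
To prove \cref{cor:smooth-cycle-specialization}, I would first build the subscheme representative and then specialize \eqref{eq:g}.

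\emph{Construction of $\cZ$.} Every $C_b$ with $b\in B$ is smooth, reduced and embedded. So in any local resolved presentation from \cref{def:admissible}, each multiplicity satisfies $a_i=1$. The images $e_i(\widetilde Z_{i,b})$ are pairwise disjoint: two distinct components of a smooth cycle cannot meet, since a point of intersection would be singular. Each $e_{i,b}$ is birational onto a smooth image, hence a birational morphism between smooth projective curves or varieties that is finite. By Zariski's main theorem it is therefore an isomorphism onto its image. The map
\[
  (p_i,e_i):\widetilde Z_i\to U\times\PP^n
\]
is then injective, proper and fiberwise an immersion. I would check that it is also an immersion in the horizontal directions: $p_i$ is a submersion, and the fiber component is an embedding. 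Hence it is a closed embedding. The union over $i$ gives a closed complex submanifold $\cZ_U\subset U\times\PP^n$ that is smooth and proper over $U$.

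\emph{Gluing and algebraicity.} Any such representative is determined by its set-theoretic support $\bigcup_b\{b\}\times\supp C_b$ with reduced structure. This support is canonical, so the local pieces glue. For algebraicity, I would note that this set is the restriction to $B$ of the support of the universal Chow family, which is Zariski closed in $\Chow\times\PP^n$. Its reduced structure is therefore a projective subscheme over $B$. Smoothness over $B$ follows from the local analytic description.

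\emph{Specialization of \eqref{eq:g}.} With $a_i=1$ and $e_{i,b}$ an embedding, $dV_{i,b}=(e_{i,b}^*\omega)^m/m!$ is exactly $d\Vol_{Z_b}$ on each component, and the rank-drop set is empty. Polarizing \eqref{eq:g}, or equivalently evaluating \eqref{eq:WP} on $(v,Jw)$, gives
\[
  g_{B,b}(v,w)=\frac1{DV_m}\int_{Z_b}g_{\mathrm{FS}}(N_v,N_w)\,d\Vol_{Z_b}.
\]
Here $N_v$ is the normal projection of any lift of $v$ to $T\cZ$, which is well defined modulo tangential vectors.

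\emph{Identification with the Wasserstein metric.} By \eqref{eq:firstvariation-B}, $N_v$ is the minimal-norm velocity field in $L^2(\mu_b)$. Hence $g_B(v,v)$ equals the squared Otto/Wasserstein norm of the tangent vector $\dot\mu_b$, which is \eqref{eq:intro-speed}.

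The main obstacle is the horizontal immersivity of $(p_i,e_i)$ together with the global gluing into an algebraic subscheme. I would handle it via the reduced support argument above, which avoids working with presentations.
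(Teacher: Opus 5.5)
There is a genuine gap in your construction of $\cZ$. You assert that each $e_{i,b}$, being birational onto a smooth image, is finite, and hence an isomorphism onto its image by Zariski's main theorem. Birationality onto a smooth (normal) target only gives connected fibers, not finiteness. Moreover, \cref{def:admissible} does not require the resolutions to be minimal.

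For $m\ge2$ this fails concretely. Let $\mathcal S\to U$ be a smooth family of embedded components and $\sigma$ a holomorphic section. The relative blow-up of $\mathcal S$ along $\sigma(U)$ is again a smooth proper submersion with connected smooth fibers, birational onto the components. Its isomorphism locus, the complement of the exceptional divisor, has fiberwise proper analytic complement. So it is an admissible presentation in which $e_{i,b}$ contracts a $\PP^{m-1}$. For such a presentation $(p_i,e_i)$ is neither injective nor immersive, so your ``local analytic description'' is not available. Your claim that the reduced support is smooth over $B$ rests entirely on it. (For $m=1$ your argument is fine, since a birational morphism of smooth projective curves is an isomorphism.)

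The reduced-support gluing and the algebraicity argument are correct, but they only produce a reduced closed subscheme of $B\times\PP^n$. What is missing is a reason why this subscheme is flat over $B$ with smooth, reduced scheme-theoretic fibers. Exhibiting a presentation with embedded fibers is equivalent to what you are trying to prove, so it cannot be assumed.

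The paper supplies this step differently. Since $B$ is reduced and the relative cycle is multiplicity-free with geometrically normal fibers, Rydh's normality criterion and Hilbert--Chow comparison represent it by a unique flat family of subschemes. The fiberwise smoothness criterion then makes that family smooth, and it is projective as a closed subscheme of $B\times\PP^n$. An analytic substitute would be a local argument: after a linear projection to $\C^m$, each component is a degree-one branched covering near every point, with multiplicity one persisting by continuity of the cycles. It is therefore a graph depending holomorphically on the parameters, since $B$ is smooth.

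Once $\cZ$ exists, the paper does not compute with the originally given presentation. It labels the connected components of the fibers of $\cZ$ locally and uses them as a resolved presentation, with $e_i$ the inclusion and $a_i=1$. By the presentation independence in \cref{prop:curvature}, summing \eqref{eq:g} over the disjoint components then gives the displayed integral. You should route your specialization step the same way. Your polarization and the identification with the Wasserstein metric via \eqref{eq:firstvariation-B} are fine.
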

\begin{proof}
The base is reduced and the relative cycle is multiplicity-free with geometrically
normal fibers.  Rydh's normality criterion and Hilbert--Chow comparison
\cite[Part~IV, Theorem~12.8 and Corollary~12.9]{Rydh08} identify it with a unique normal
flat family of subschemes.  The fiberwise smoothness criterion
\cite[\href{https://stacks.math.columbia.edu/tag/01V8}{Tag~01V8}]{Stacks} makes the family smooth, and the closed
immersion into $B\times\PP^n$ makes it projective.  On smaller analytic
neighborhoods, the connected components of the smooth fibers may be labelled and
used as the embedded components of a resolved presentation.  Reducedness of the
fibers makes all multiplicities equal to one; summing \eqref{eq:g} over the
disjoint smooth components recovers the displayed integral over $Z_b$.
\end{proof}

Denote by $d_B$ the Riemannian length distance of $g_B$, and write
$\widehat B:=\widehat{(B,d_B)}$ for its metric completion.

\begin{theorem}[Metric normalization]\label{thm:main}
For every admissible dense smooth Zariski-open locus $B\subset X_{\rm reg}$, the
identity on $B$ extends to a canonical homeomorphism
\begin{equation}\label{eq:main}
  \Phi:Y\longrightarrow\widehat B.
\end{equation}
Under this identification, set
\[
  \bar d(y,z)=d_{\widehat B}(\Phi(y),\Phi(z)).
\]
Then $(Y,\bar d)$ is compact, complete, and geodesic, and
$\bar d|_{B\times B}=d_B$.  For any sequences $b_j,c_j\in B$ converging to
$y,z\in Y$ in the normalization topology,
\begin{equation}\label{eq:limitmetric}
  \bar d(y,z)=\lim_{j\to\infty}d_B(b_j,c_j).
\end{equation}
For every chordal distance $h_Y$ induced by a projective embedding of $Y$, there
exist $C,\beta>0$ such that
\begin{equation}\label{eq:holdermetric}
  \bar d(y,z)\le C h_Y(y,z)^\beta,
\end{equation}
which implies
\begin{equation}\label{eq:LC}
  \diam_{d_B}\bigl(B\cap B_{h_Y}(y,r)\bigr)
  \le 2Cr^\beta\longrightarrow0
\end{equation}
uniformly in $y$.  The resulting metric on $Y$ is independent of the admissible
locus; see \cref{cor:admissible-locus-independent}.
\end{theorem}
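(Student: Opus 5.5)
The strategy is to build the extension $\Phi$ from two competing estimates. An upper bound makes $d_B$ uniformly continuous with respect to the chordal topology of $Y$. A lower bound, coming from the ambient $W_2$ distance in \eqref{eq:lower}, separates points whose cycle measures differ.

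\textbf{Step 1: upper bound.} Fix a resolution $\pi:\widetilde Y\to Y$, isomorphic over $B$. By \cref{prop:curvature}, on $B$ we have $\Omega_B=\frac1{D(m+1)}dd^c(\Psi_{\rm Ch}\circ\chi)$ locally. By \cref{prop:logholder}, the potential $\varphi=\Psi_{\rm Ch}\circ\chi\circ\nu\circ\pi$ is locally H\"older on $\widetilde Y$ relative to a smooth Kähler reference. So $\pi^*\Omega_B$ extends as a closed positive current with H\"older potential. I would then invoke a GGZ-type estimate: for a Kähler current $\theta+dd^c\varphi$ with $C^{0,\alpha}$ potential and smooth semipositive (big) $\theta$, the length distance of the smooth part satisfies $d\le C\,h_{\widetilde Y}^{\alpha'}$. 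Concretely, I would join two nearby points by a short holomorphic disc or a chain of coordinate segments. On such a path, the energy $\int\Omega_B$ over a tube is controlled by integration by parts against a cutoff, which gives $\mathrm{osc}(\varphi)\lesssim r^{\alpha}$ on a tube of radius $r$. A dyadic chaining argument then yields a H\"older modulus for $d_B$ in the $\widetilde Y$ chordal metric. To pass to $Y$, I would use a \L{}ojasiewicz inequality comparing $h_Y(\pi u,\pi v)$ with the distance between the fibers $\pi^{-1}$. Together these give $d_B(b,c)\le C h_Y(b,c)^\beta$ for $b,c\in B$.

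Consequences of Step 1. Sequences in $B$ that converge in $Y$ are $d_B$-Cauchy, and the same holds for interleaved sequences. So $\Phi$ extends continuously, \eqref{eq:limitmetric} holds, and \eqref{eq:holdermetric} and \eqref{eq:LC} follow. Resolution fibers collapse automatically because the bound is in terms of $h_Y$, not $h_{\widetilde Y}$. In particular, $\Phi(Y)$ is compact.

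\textbf{Step 2: surjectivity.} Let $(b_j)$ be a $d_B$-Cauchy sequence. By compactness of $Y$, it has a subsequence converging in $Y$ to some point $y$. By the uniform continuity of Step 1, the whole sequence has the same limit as $\Phi(y)$. Hence $\Phi$ is surjective.

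\textbf{Step 3: injectivity (the main obstacle).} I take $y\ne z$ in $Y$. If $\nu(y)\ne\nu(z)$, then \eqref{eq:lower}, together with continuity and injectivity of $C\mapsto\mu_C$, gives $\bar d(y,z)\ge W_2(\mu_{\nu y},\mu_{\nu z})>0$. The hard case is distinct branches over a single Chow cycle. Here I would argue by unibranchness of the normal variety $Y$. Choose disjoint small neighborhoods $U_y,U_z$ of $y,z$ in $Y$. A $d_B$-short path in $B$ from near $y$ to near $z$ must leave $U_y$, hence it passes through $\partial U_y$, which is compact. The key claim is that $\nu(\partial U_y)$ avoids $\nu(y)$, which holds if $U_y$ is chosen as a component of $\nu^{-1}$ of a small neighborhood of $\nu(y)$. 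Then $W_2\bigl(\mu_{\nu(\partial U_y)},\mu_{\nu(y)}\bigr)\ge\epsilon>0$. The lower bound \eqref{eq:lower} applied along the path then forces its length to be at least $\epsilon$, so $\bar d(y,z)\ge\epsilon$. Note that $B\cap U_y$ is connected because $Y$ is normal, which is what makes the branch separation well defined.

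\textbf{Step 4: conclusion.} With Step 3, $\Phi$ is a continuous bijection from a compact space to a Hausdorff space, hence a homeomorphism. The completion of a length space is a complete length space, and since it is compact, Hopf–Rinow gives that it is geodesic. Finally, $\bar d|_{B\times B}=d_B$ holds by construction. I expect Step 1's H\"older length estimate near a singular, nonsmooth boundary to be the most delicate technically, with Step 3 being the conceptual core.
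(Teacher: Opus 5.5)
Your Steps 2--4 are sound. Step 3 is essentially the paper's branch-separation argument: the paper removes disjoint neighborhoods of the whole finite fiber $\nu^{-1}(x)$ and uses the compact complement $K_0$, while you use $\partial U_y$ for an open--closed piece $U_y$ of $\nu^{-1}(V)$, which does the same job.

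The genuine gap is in Step 1, at the passage from $\widetilde Y$ to $Y$. The GGZ-type estimate gives you only a continuous map $K:\widetilde Y\to\widehat B$ with $d_{\widehat B}(K(u),K(v))\le C\,h_{\widetilde Y}(u,v)^{\alpha/2}$. The \L{}ojasiewicz inequality that is actually true is $\dist((u,v),Z_\pi)\le C\,h_Y(\pi u,\pi v)^\eta$, where $Z_\pi=\{(u',v'):\pi u'=\pi v'\}$. A comparison of the form $h_{\widetilde Y}(u,v)\lesssim h_Y(\pi u,\pi v)^\eta$ for $u,v\in B$ (fibers over $B$ are singletons) is false as soon as some exceptional fiber $E_y=\pi^{-1}(y)$ is positive-dimensional. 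To see this, let $u_j,v_j\in B$ tend to two distinct points of $E_y$.

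To use the correct inequality you must know that $K(u')=K(v')$ for $u',v'$ in the same fiber, possibly far apart in $\widetilde Y$. The H\"older bound on $\widetilde Y$ says nothing about such pairs. Your sentence ``resolution fibers collapse automatically because the bound is in terms of $h_Y$'' is therefore circular, since the $h_Y$ bound presupposes the collapse. Without it, $\Phi$ is not even shown to be well defined on $Y$; a priori the completion could be $\widetilde Y$ or an intermediate modification.

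The missing idea is weak-gradient rigidity on the fibers. The potential $u=\frac1{D(m+1)}\Psi_{\rm Ch}\circ\chi\circ\nu\circ\pi$ is constant on $E_y$, because $\chi\nu\pi$ is. Moreover, $E_y$ is connected by Zariski's main theorem because $Y$ is normal; this, not connectedness of $B\cap U_y$, is where normality is essential. The paper then proceeds in two steps.
\begin{enumerate}
\item \Cref{lem:weakgradient}: for $p\in B$, the distance function $F_p=d(p,\cdot)$ extends continuously to the resolution, lies in $W^{1,2}_{\rm loc}$ across the SNC exceptional divisor (via logarithmic cutoffs), and satisfies $2i\,\partial F_p\wedge\bar\partial F_p\le T$.
\item \Cref{lem:null}: a continuous $W^{1,2}$ function whose gradient is dominated by $dd^cu$ is constant on every connected compact analytic set on which $u$ is locally constant. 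This is proved by mollifying, pulling back to holomorphic discs, and integrating by parts.
\end{enumerate}
Since the functions $F_p$, $p\in B$, separate points of $\widehat B$, $K$ is constant on the fibers of $\pi$ and descends to $\Phi:Y\to\widehat B$. Only after this does the \L{}ojasiewicz comparison give \eqref{eq:holdermetric} with $\beta=\eta\alpha/2$, and with it your Steps 2--4.
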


\section{Metric normalization}\label{sec:normalization}

This section identifies the metric completion with the normalization.  Local
intrinsic-distance control and weak-gradient rigidity first collapse resolution
fibers, while the ambient Wasserstein lower bound separates distinct branches.
A semialgebraic comparison then yields the global H\"older estimate
\eqref{eq:holdermetric}.  Admissible-locus independence follows later from the
global action identity; see \cref{cor:admissible-locus-independent}.

\subsection{Local intrinsic distance control}
The key analytic input is a local estimate that converts H\"older control of the
potential into intrinsic distance control; compare
\cite[Proposition 1.4 and Section 4.2]{GGZ}.

\begin{lemma}\label{lem:localdistance}
Let $U$ be a Euclidean ball in $\C^N$, $A\subset U$ a proper analytic subset, and
$T=dd^cu\ge0$ with $u\in C^{0,\alpha}(U)$, $\alpha>0$.  Suppose $T$ is a smooth
K\"ahler form on $U\setminus A$, with associated metric $g$.  For sufficiently
nearby $x,y$ in a smaller ball outside $A$, there exists a piecewise smooth curve
in $U\setminus A$ joining them with
\begin{equation}\label{eq:localdistance}
 L_g(\gamma)\le C|x-y|^{\alpha/2}.
\end{equation}
The constant is uniform in $x,y$ in that smaller ball.
\end{lemma}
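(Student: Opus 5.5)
The plan is to reduce to a one-variable problem on the complex line through $x$ and $y$, and then build the curve out of circles and radial segments whose lengths are controlled by the Hölder modulus of $u$. Fix a smaller ball $U'\Subset U$ and points $x,y\in U'\setminus A$ with $r:=|x-y|$ small enough that the relevant discs stay inside $U$. Let $L$ be the complex line through $x$ and $y$. Since $x\notin A$, the set $L\cap A$ is a proper analytic subset of $L\cap U$, so it is discrete, and only finitely many of its points lie in a compact disc. Choose the affine coordinate $z$ on $L$ with $x=0$ and $|y|=r$. The restriction $v:=u|_L$ is subharmonic and $C^{0,\alpha}$, with the same constant as $u$. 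Off $A$ we write $g|_L=\lambda\,|dz|^2$, where $\lambda\ge0$ is smooth. The positive measure $dd^cv$ coincides with $\lambda\,dA$ (up to a fixed normalization constant) on $L\setminus A$. Because $v$ is continuous, $dd^cv$ has no point masses, so it puts no mass on the finite set $L\cap A$.

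The first key estimate is a mass bound from Jensen's formula. For any disc $D(z_0,\rho)\subset L$ with $D(z_0,2\rho)\subset U$,
\[
  \int_{D(z_0,\rho)}\lambda\,dA\le C\,dd^cv\bigl(D(z_0,\rho)\bigr)
  \le \frac{C}{\log 2}\Bigl(\sup_{D(z_0,2\rho)}v-v(z_0)\Bigr)\le C'\rho^{\alpha}.
\]
Next, Cauchy--Schwarz converts this into short circles and short segments. Averaging over circles gives
\[
  \int_{\rho}^{2\rho}\Bigl(\int_{|z|=s}\sqrt\lambda\,|dz|\Bigr)ds
  \le\Bigl(\int_{\rho\le|z|\le2\rho}\lambda\,dA\Bigr)^{1/2}\bigl(3\pi\rho^2\bigr)^{1/2}
  \le C\rho^{1+\alpha/2},
\]
so some $s\in[\rho,2\rho]$ gives a circle of $g$-length at most $C\rho^{\alpha/2}$. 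The analogous average over angles of radial segments $\{te^{i\theta}:\rho\le t\le2\rho\}$ uses the weight $t\simeq\rho$, which is harmless in one annulus. It yields a radial segment of length $\le C\rho^{\alpha/2}$. Both choices are made from a set of positive measure, so we may also require the circle and segment to miss the finite set $L\cap A$.

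Assemble these pieces dyadically around each endpoint. Take $\rho_k=2^{-k}\cdot 2r$ for $k\ge0$. In each annulus around $0$, choose a good circle. Join consecutive circles by a good radial segment crossing the annulus $\{\rho_{k+1}\le|z|\le\rho_k\}$. The total length is bounded by
\[
  \sum_k C\rho_k^{\alpha/2}\le C''r^{\alpha/2},
\]
and this geometric sum converges precisely because $\alpha>0$. To avoid an infinite concatenation, stop at the first $k$ for which $D(x,\rho_k)$ lies in a fixed small ball around $x$ disjoint from $A$. Inside that ball, use a straight segment to $x$ whose length is controlled by $g\le C\,g_{\rm Eucl}$ there. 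This step trades uniformity for smoothness near $x$. If that bound is not uniform enough, I would instead keep the infinite chain, which converges to $x$ with finite length, and then approximate it by a piecewise smooth curve of comparable length. The point $y$ lies on the circle $|z|=r$, which is within the range of circles at the top scale. Repeat the construction around $y$, and connect the outermost good circles around $x$ and $y$ by one more good circle centred at $x$ of radius between $r$ and $2r$. Every piece lies in $(L\cap U)\setminus A$, which gives \eqref{eq:localdistance}. The constants depend only on the Hölder norm of $u$ on $U$ and on $\operatorname{dist}(U',\partial U)$.

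The step I expect to be the main obstacle is the identification of the restricted metric with the restricted current. We need $g|_L=\lambda|dz|^2$ with $\int\lambda\,dA$ bounded by the mass of $dd^c(u|_L)$. This requires that restriction of the smooth form $T$ to $L\setminus A$ commutes with taking $dd^c$ of the restricted potential. It also requires that no mass of $dd^c(u|_L)$ hides on $A\cap L$, which is where continuity of $u$ is used. The related uniformity issue is the stopping radius near $x$, which depends on $\operatorname{dist}(x,A)$. If this causes trouble, I would handle it with the infinite dyadic chain described above rather than with a Euclidean comparison near $x$.
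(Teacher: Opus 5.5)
Your argument is correct, but it takes a different route from the paper's.

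\textbf{The paper's route.} The paper stays in $\C^N$. It bounds the Euclidean trace $e=\tr_{\rm Eucl}g$ by $\int_{B(x,r)}e\le Cr^{2N-2+\alpha}$, using integration by parts against a cutoff. Cauchy--Schwarz on dyadic shells, with weight $t^{1-2N}$, then shows that the spherical average of the full radial integrals $\int_0^r\sqrt{e(x+t\theta)}\,dt$ is $O(r^{\alpha/2})$. Chebyshev on the lens $B(x,2r)\cap B(y,2r)$ produces one point $z$ for which both segments $x\to z$ and $z\to y$ are short. Segments meeting $A$ are discarded by a real-dimension count.

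\textbf{Your route.} You restrict to the complex line $L$ through $x$ and $y$. The integration by parts is replaced by the Jensen--Riesz bound for the subharmonic function $u|_L$, and $A$ is avoided trivially because $L\cap A$ is finite. The step you flag as the main obstacle is routine. Since $dd^cu=T$ is smooth off $A$, elliptic regularity makes $u$ smooth on $U\setminus A$, so $dd^c(u|_L)=T|_L$ there. Moreover, the inequality $\int\lambda\,dA\le C\,dd^cv$ holds regardless of any mass $dd^cv$ might carry on $L\cap A$.

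\textbf{Trade-off.} Your per-scale choice of circles and segments costs you an infinite chain accumulating at $x$ (and at $y$). The paper's radial integrals contain all scales at once, so its curve has only two pieces. You are right that your first stopping rule is not uniform. The remedy is simply to stop deeper. Since $\lambda$ is bounded near $x$, choose $k$ (depending on $x$) so that the final straight segment to $x$ has $g$-length at most $r^{\alpha/2}$. The dyadic part is bounded by $C''r^{\alpha/2}$ independently of $k$, so the constant stays uniform.

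\textbf{Small simplifications and one correction.}
\begin{enumerate}
\item In one complex variable the dyadic Cauchy--Schwarz factor is $(2\pi\log 2)^{1/2}$. Hence the full radial average from $x$ already converges, and the paper's two-segment construction works inside $L$ with no circles at all.
\item The top-scale circles about $x$ and $y$ have radii in $[r,2r]$ and centres at distance $r$, so they automatically intersect; no extra connecting circle is needed.
\item The radial segments joining consecutive good circles must cross $\{\rho_{k+2}\le|z|\le\rho_k\}$, not a single annulus. This does not change the estimate.
\end{enumerate}
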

\begin{proof}
Set $d=2N$ and $e=\tr_{\rm Eucl}g$ off $A$.  Extend $e$ by zero on $A$ for
Lebesgue integration.  A cutoff $\chi$ equal to one on $B(x,r)$ and supported in
$B(x,2r)$ has $|dd^c\chi|\le Cr^{-2}$.  After subtracting $u(x)$, integration by parts implies the trace estimate
\begin{equation}\label{eq:mass}
 \int_{B(x,r)}e\,dz\le C r^{d-2+\alpha}.
\end{equation}
Because the estimate is obtained at the level of the positive current before
restriction to the smooth locus, it also holds for balls centered on $A$.

On each dyadic annulus, Cauchy--Schwarz bounds the radial integral by
\begin{align*}
 &\int_{S^{d-1}}\int_{r/2}^r\sqrt{e(x+t\theta)}\,dt\,d\theta\\
 &\quad\le
 \left(\int_{B(x,r)\setminus B(x,r/2)}e\,dz\right)^{1/2}
 \left(|S^{d-1}|\int_{r/2}^r t^{1-d}\,dt\right)^{1/2}
 \le Cr^{\alpha/2}.
\end{align*}
For $d=2$ the second integral is $\log2$; for $d>2$ it is of order $r^{2-d}$.
Summing the dyadic annuli proves
\begin{equation}\label{eq:radialmean}
 \int_{S^{d-1}}\int_0^r\sqrt{e(x+t\theta)}\,dt\,d\theta\le Cr^{\alpha/2}.
\end{equation}
If $r=|x-y|$, the intersection $B(x,2r)\cap B(y,2r)$ has volume at least
$c r^d$.  For $z=x+\rho\theta$ define
\[
 L_x(z):=\int_0^\rho\sqrt{e(x+t\theta)}\,dt,
\]
using the extension $e=0$ on $A$ fixed above.  This quantity is defined whether
or not the radial segment meets $A$; when the segment stays in $U\setminus A$,
its $g$-length is bounded above by $L_x(z)$, because
$g(\theta,\theta)\le\tr_{\rm Eucl}g=e$.  Integrating \eqref{eq:radialmean} once more over
$0<\rho<2r$, with weight $\rho^{d-1}$, produces the estimate
\[
   \int_{B(x,2r)}L_x(z)\,dz\le C r^{d+\alpha/2},
\]
and the analogous estimate holds with $y$ in place of $x$.  Restricting to
$E:=B(x,2r)\cap B(y,2r)$ and dividing by $|E|\ge c r^d$ shows that
\[
 \frac1{|E|}\int_E\bigl(L_x(z)+L_y(z)\bigr)\,dz
 \le C_0r^{\alpha/2}.
\]
Chebyshev's inequality shows that the set
\[
 E_{\rm good}:=\{z\in E:L_x(z)+L_y(z)\le 2C_0r^{\alpha/2}\}
\]
has measure at least $|E|/2$.  For fixed $x\notin A$, the endpoints for which the
radial segment meets $A$ lie locally in
\[
 \mathcal B_x=\{x+\lambda(a-x):a\in A,\ \lambda\ge1\}.
\]
A proper complex analytic subset has real dimension at most $d-2$.  The locally
relevant part of this image therefore has real dimension at most $d-1$ and zero
Lebesgue measure.  The analogous exceptional set for $y$ is also null, so
$E_{\rm good}\setminus(\mathcal B_x\cup\mathcal B_y)$ is nonempty.  Choosing $z$
there, the broken radial path from $x$ to $z$ to $y$ lies in $U\setminus A$, has
$g$-length at most $2C_0r^{\alpha/2}$, and proves the assertion.
\end{proof}

\subsection{Rigidity on constant-potential fibers}\label{sec:null}
To determine which points of a resolution are identified in the completion, we
need two weak-gradient facts.

For a real-valued function $F\in W^{1,2}_{\rm loc}$, write
\begin{equation}\label{eq:Qdef}
 Q(F):=2i\,\partial F\wedge\bar\partial F.
\end{equation}
With the convention $\Omega(v,Jv)=g(v,v)$, a direct calculation in a unitary
holomorphic frame shows
\begin{equation}\label{eq:Qunit}
 Q(F)\le \Omega\quad\Longleftrightarrow\quad |dF|_g\le1
\end{equation}
for smooth $F$; the same equivalence holds almost everywhere for Sobolev functions.

\begin{lemma}\label{lem:weakgradient}
Let $R$ be a smooth connected compact complex manifold, $A$ an SNC divisor, and let
$T$ be a positive closed $(1,1)$-current with locally H\"older potentials.  Assume
that $T$ is a smooth K\"ahler form on $R\setminus A$, with length distance $d$.
For $p\in R\setminus A$, the distance function
$F_p=d(p,\cdot)$ extends continuously to $R$, belongs to
$W^{1,2}_{\rm loc}(R)$, and satisfies the current inequality with constant one
\begin{equation}\label{eq:gradcurrent}
 Q(F_p)\le T.
\end{equation}
\end{lemma}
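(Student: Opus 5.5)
We prove the three assertions in order: continuous extension, local Lipschitz bounds off $A$, then removal of $A$ using that it has zero capacity.

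\emph{Continuous extension.} By \cref{lem:localdistance}, applied in coordinate balls covering $R$ (with $A$ as the exceptional analytic set and a local H\"older potential $u$ of $T$), any two points of $R\setminus A$ in a small ball $U'$ satisfy $d(x,y)\le C|x-y|^{\alpha/2}$ in local coordinates. Two consequences follow:
\begin{itemize}
\item $d$ is finite on $R\setminus A$, since $R\setminus A$ is connected: the complement of a divisor in a connected manifold is connected.
\item The triangle inequality $|F_p(x)-F_p(y)|\le d(x,y)$ makes $F_p$ uniformly H\"older on $U'\setminus A$.
\end{itemize}
Hence $F_p$ extends uniquely to a continuous function on $R$, H\"older with exponent $\alpha/2$ in charts.

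\emph{Local Lipschitz bound off $A$.} On $R\setminus A$, $F_p$ is $1$-Lipschitz for the smooth Riemannian metric $g$. By Rademacher's theorem it is differentiable a.e. with $|dF_p|_g\le1$. By \eqref{eq:Qunit}, $Q(F_p)\le T$ pointwise a.e. on $R\setminus A$, and $F_p\in W^{1,\infty}_{\rm loc}(R\setminus A)$. The inequality here is a statement between forms with $L^1_{\rm loc}$ coefficients, since $T$ is smooth off $A$.

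\emph{Removing $A$.} This is the main obstacle. First we bound the Euclidean energy near $A$. Wherever $e=\tr_{\rm Eucl}T$ is defined,
\[
|dF_p|^2_{\rm Eucl}\le |dF_p|_g^2\,\Lambda_{\max}(g)\le e,
\]
because $|dF|^2_{\rm Eucl}\le \Lambda_{\max}(g)\,|dF|_g^2$ and the largest eigenvalue is bounded by the trace. The mass estimate \eqref{eq:mass} gives $e\in L^1_{\rm loc}(R)$, since the trace measure of $T$ puts no mass on $A$ beyond its absolutely continuous part. To justify this, note that a positive closed current with continuous potentials does not charge proper analytic sets, by the Bedford--Taylor/Demailly theory. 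Therefore $\int_{K\setminus A}|dF_p|^2<\infty$ on compacts $K$.

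Next we show the distributional gradient of $F_p$ on $R$ equals the a.e. gradient off $A$. Here we use that $A$ has zero $W^{1,2}$-capacity: its real codimension is $2$, so there are cutoffs $\chi_k\to1$ a.e., vanishing near $A$, with $\|\nabla\chi_k\|_{L^2}\to0$ (log cutoffs in the normal directions, which suffice for codimension $2$). For a test vector field $\phi$, we integrate by parts against $\chi_k\phi$. The error term is $\int F_p\,\nabla\chi_k\cdot\phi$, and it tends to $0$ because $F_p$ is bounded and $\|\nabla\chi_k\|_{L^1}\le C\|\nabla\chi_k\|_{L^2}\to0$. This gives $F_p\in W^{1,2}_{\rm loc}(R)$.

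Finally, for a strongly positive test form $\psi$ of bidegree $(N-1,N-1)$,
\[
\int Q(F_p)\wedge\psi=\int_{R\setminus A}Q(F_p)\wedge\psi\le\int_{R\setminus A}T\wedge\psi=\int_R T\wedge\psi.
\]
The first equality holds because $Q(F_p)$ has $L^1$ coefficients and $A$ is null. The last equality holds because $T$ does not charge $A$. This proves \eqref{eq:gradcurrent}.

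The delicate point is this last non-charging claim. If it is not available directly, we would instead derive it from the H\"older potential: \eqref{eq:mass} holds for balls centred on $A$, and covering a compact piece of $A$, which has real dimension $d-2$, by about $\varepsilon^{-(d-2)}$ balls of radius $\varepsilon$ gives total mass $O(\varepsilon^{\alpha})\to0$.
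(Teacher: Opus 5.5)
Your proposal is correct and follows essentially the same route as the paper. Both arguments use the same four steps:
\begin{enumerate}
\item H\"older continuous extension via \cref{lem:localdistance};
\item the a.e.\ bound $|dF_p|_g\le1$ off $A$, turned into a local $L^2$ energy bound through the trace of $T$;
\item logarithmic cutoffs, which work because $A$ has real codimension two, to identify the weak gradient;
\item testing against positive forms.
\end{enumerate}
The step you flag as delicate is unnecessary: positivity of $T$ already gives $\int_{R\setminus A}T\wedge\psi\le\int_R T\wedge\psi$, and $e\in L^1_{\rm loc}$ follows from finiteness of the trace mass of $T$. So no non-charging statement is needed, although your justifications of it are valid.
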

\begin{proof}
If two points of $R\setminus A$ approach the same point of $R$,
\cref{lem:localdistance} forces their mutual $d$-distance to vanish.  The inclusion of $R\setminus A$ into its metric completion extends continuously
across $A$, locally with a H\"older modulus.  The distance function $F_p$ therefore
extends continuously to $R$ and is locally bounded there.

Fix a coordinate ball $U\Subset R$ with a smooth background K\"ahler form
$\omega_0$.  On $U\setminus A$, the distance function is locally Lipschitz for the
smooth metric $T$ and satisfies $|dF_p|_T\le1$ almost everywhere.  By
\eqref{eq:Qunit},
\[
 Q(F_p)\le T\qquad\text{a.e.\ on }U\setminus A.
\]
Taking the $\omega_0$-trace and integrating, we obtain
\[
 \int_{U\setminus A}|\nabla_{\omega_0}F_p|^2\,dV_{\omega_0}
 \le C\int_U T\wedge\omega_0^{N-1}<\infty.
\]
The classical gradient is square-integrable off $A$.  An SNC divisor has zero
Lebesgue measure, so extending the gradient by zero across $A$ defines an
$L^2(U)$ one-form, still denoted by $dF_p$.

A logarithmic cutoff removes the possible Sobolev boundary term along the divisor.
In SNC coordinates choose $\chi_\eps$ which vanish when a normal
coordinate has modulus at most $\eps^2$, equal one once all such moduli are at least
$\eps$, and satisfy
\[
 \int_U|d\chi_\eps|_{\omega_0}^2\,dV_{\omega_0}
 =O(|\log\eps|^{-1})\longrightarrow0.
\]
Products of the one-variable cutoffs handle finitely many crossing components.
Local boundedness of $F_p$ gives
$F_p\chi_\eps\to F_p$ in $L^2$ and
$F_p\,d\chi_\eps\to0$ in $L^2$.  Moreover,
$|(1-\chi_\eps)dF_p|^2\le |dF_p|^2\in L^1(U)$ and
$\chi_\eps\to1$ almost everywhere off $A$, so dominated convergence shows that
$(1-\chi_\eps)dF_p\to0$ in $L^2$.  These cutoff estimates place $F_p$ in
$W^{1,2}(U)$, with weak derivative equal to its classical derivative on
$U\setminus A$.

The weak gradient is square-integrable, so the matrix-valued measure $Q(F_p)$ is
absolutely continuous with respect to Lebesgue measure and assigns no mass to $A$.
Testing the pointwise
inequality on $U\setminus A$ against arbitrary nonnegative compactly supported
$(N-1,N-1)$ forms and passing across $A$ proves \eqref{eq:gradcurrent} with the
same constant one.
\end{proof}

\begin{lemma}\label{lem:null}
Let $M$ be a complex manifold, let $u\in C^0(M)\cap\mathrm{PSH}(M)$, and let
$F\in C^0(M)\cap W^{1,2}_{\rm loc}(M)$ be real-valued.  Suppose that in every
coordinate domain
\begin{equation}\label{eq:null-hyp}
 i\partial F\wedge\bar\partial F\le C\,dd^cu
\end{equation}
as positive $(1,1)$-currents, with a locally uniform constant $C$.  If
$h:\Delta\to M$ is a holomorphic disk and $u\circ h$ is constant, then
$F\circ h$ is constant.  The conclusion extends to every connected compact
analytic subset $Z\subset M$ on which $u|_Z$ is locally constant.
\end{lemma}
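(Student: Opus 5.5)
The plan is to reduce to a slicing argument on embedded disks, using continuity of $u$ and $F$ to pass from nearby generic slices to the given disk. Since $W^{1,2}_{\rm loc}$ functions have no canonical restriction to a complex curve, I will not restrict $F$ to $h(\Delta)$ directly.

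\emph{Reductions.} If $h$ is constant there is nothing to prove. Otherwise $h$ is an immersion and locally injective off a discrete set $S\subset\Delta$. Since $F\circ h$ is continuous, it suffices to show that $F\circ h$ is locally constant on $\Delta\setminus S$, because $\Delta\setminus S$ is connected. Near a point of $\Delta\setminus S$ I choose local holomorphic coordinates $(z_1,w)\in\C\times\C^{N-1}$ on $M$ in which $h(\Delta)$ is locally the slice $D_0=\{w=0\}$. I write $D_w=\{(z_1,w)\}$ for the parallel slices. In these coordinates \eqref{eq:null-hyp} holds with some constant $C$.

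\emph{Slice energy estimate.} Fix $\psi\ge0$ in $C_c^\infty$ of the $z_1$-disk and $\chi_\delta\ge0$ an approximate identity in $w$. I test \eqref{eq:null-hyp} against the positive $(N-1,N-1)$-form $\psi(z_1)\chi_\delta(w)\,\bigwedge_k\tfrac{i}{2}dw_k\wedge d\bar w_k$. By Fubini, the left side becomes $\int\chi_\delta(w)\int_{D_w}\psi\,|\partial_{z_1}F|^2\,dA\,dV(w)$, up to a normalizing constant. The right side becomes $C\int\chi_\delta(w)\int_{D_w}u\,dd^c\psi\,dV(w)$; this step uses integration by parts in $z_1$, which is legitimate because $u$ is continuous. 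As $\delta\to0$, continuity of $u$ makes the inner integral converge uniformly to $\int_{D_0}u\,dd^c\psi$. That limit vanishes because $u|_{D_0}$ is constant and $\int dd^c\psi=0$. Hence the averaged slice energies tend to zero. For a.e.\ $w$, $F|_{D_w}\in W^{1,2}$ with weak derivative given by the restriction (Fubini for Sobolev functions). So I can choose $w_k\to0$ with $\int_{D_{w_k}}\psi|\partial_{z_1}F|^2\to0$.

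\emph{Passing to the limit.} By continuity of $F$, $F|_{D_{w_k}}\to F|_{D_0}$ locally uniformly. Their Dirichlet energies on $\{\psi\ge1\}$ tend to zero. Weak lower semicontinuity of the Dirichlet energy then gives $F|_{D_0}\in W^{1,2}$ with zero gradient there, so $F|_{D_0}$ is locally constant. This settles the disk case. The step I expect to be most delicate is justifying the slicing identity for the right-hand side, namely that the slices of $dd^cu$ are $dd^c(u|_{D_w})$ uniformly in $w$. For continuous plurisubharmonic $u$ this follows from the integration by parts above, which is why I keep $dd^c$ on the test function throughout.

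\emph{Extension to $Z$.} Let $Z$ be a connected compact analytic subset with $u|_Z$ locally constant. Two points of one irreducible component of $Z$ can be joined by a finite chain of irreducible complex curves in that component. I take the normalizations of these curves and cover them by holomorphic disks $h$. On each such disk $u\circ h$ is constant, since it is continuous and locally constant on a connected set, so $F\circ h$ is constant by the disk case. Hence $F$ is constant on each irreducible component. Connectedness of $Z$ links the finitely many components through common points, so $F|_Z$ is constant.
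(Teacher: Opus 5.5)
Your proof is correct, but it transfers the inequality to the disk by a different mechanism than the paper.

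\textbf{The paper's route.} It never straightens the disk. It mollifies $F$ and $u$ in ambient coordinates and uses the matrix Jensen inequality to keep $i\partial F_\eps\wedge\bar\partial F_\eps\le C\,dd^cu_\eps$ for the smooth approximants. It then pulls this back by the arbitrary holomorphic map $h$. The Dirichlet energy of $F_\eps\circ h$ on a smaller disk is bounded by $C\int(u_\eps\circ h)\,dd^c\zeta\to0$, and Poincar\'e's inequality together with uniform convergence finishes the argument. This costs nothing at critical or self-intersection points of $h$. The same mollify-then-pull-back mechanism is reused in \cref{lem:trace} for arbitrary holomorphic maps (the stratum lifts), which a slicing argument would not supply.

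\textbf{Your route.} You pay for a reduction to embedded pieces of $h$, using the discrete critical set and connectedness of $\Delta\setminus S$. In exchange you avoid mollifiers and matrix Jensen altogether. You only pair the hypothesis with the decomposable positive form $\psi(z_1)\chi_\delta(w)\bigwedge_k\tfrac{i}{2}dw_k\wedge d\bar w_k$. You then use Fubini and the ACL property of $W^{1,2}$ to pick slices $w_k\to0$ with vanishing energy, which needs a one-line Chebyshev selection. The conclusion comes from uniform convergence and lower semicontinuity.

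The step you flag as delicate is actually immediate. The identity $\langle dd^cu,\Theta\rangle=\langle u,dd^c\Theta\rangle$ is the definition of $dd^cu$. Because $\Theta$ has top degree in $w$, $dd^c\Theta=\chi_\delta\,dd^c_{z_1}\psi\wedge\bigwedge_k\tfrac{i}{2}dw_k\wedge d\bar w_k$. So no slicing theory for currents is involved, and continuity of $u$ is needed only for the limit $\delta\to0$.

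\textbf{The extension to $Z$.} One point should be tightened. Suppose your ``finite chain of irreducible complex curves'' means compact curves in $Z$. Then the claim can fail for non-algebraic $M$: a general two-dimensional complex torus contains no curves at all, and the lemma is stated for arbitrary complex manifolds. There are two fixes. You can allow local curves, which makes the claim true but requires justification. Alternatively, argue as the paper does: the regular locus of each positive-dimensional irreducible component is a connected manifold locally covered by holomorphic disks. Your disk case then makes $F$ locally constant, hence constant, there. Density of the regular locus and continuity of $F$ extend this to the whole component. In the paper's application $Z=E_y$ is projective, so your version would work there as stated.
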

\begin{proof}
First work locally on the disk.  Around a point of $\Delta$, choose
$\Delta_0\Subset\Delta$ whose image lies in a coordinate domain $U\Subset M$, and
regularize in the ambient coordinates with a standard nonnegative real mollifier
$\rho_\eps$:
\[
 F_\eps=F*\rho_\eps,\qquad u_\eps=u*\rho_\eps.
\]
On compact subsets, $F_\eps\to F$ uniformly and in $W^{1,2}_{\rm loc}$, while
$u_\eps\to u$ uniformly and $dd^cu_\eps=(dd^cu)*\rho_\eps$.  Interpreting
$i\partial F\wedge\bar\partial F$ as the positive Hermitian matrix-valued
$L^1$ measure associated with the weak complex gradient, the matrix
Cauchy--Schwarz/Jensen inequality implies
\begin{equation}\label{eq:jensen}
 i\partial F_\eps\wedge\bar\partial F_\eps
 \le (i\partial F\wedge\bar\partial F)*\rho_\eps
 \le C\,dd^cu_\eps.
\end{equation}
After regularization both sides are smooth, so pulling back by $h$ is legitimate.

Let $\zeta\ge0$ be smooth and compactly supported in $\Delta_0$, equal to one on a
smaller concentric disk.  From \eqref{eq:jensen},
\[
 \int_{\Delta_0} \zeta\,
 i\partial(F_\eps\circ h)\wedge\bar\partial(F_\eps\circ h)
 \le C\int_{\Delta_0}\zeta\,dd^c(u_\eps\circ h).
\]
Integration by parts rewrites the right-hand side as
\[
 \int_{\Delta_0}\zeta\,dd^c(u_\eps\circ h)
 =\int_{\Delta_0}(u_\eps\circ h)\,dd^c\zeta\longrightarrow0,
\]
because $u_\eps\circ h$ converges uniformly to a constant and
$\int_{\Delta_0} dd^c\zeta=0$.  The Dirichlet energy of $F_\eps\circ h$ tends to zero on every smaller disk.
Poincar\'e's inequality and uniform convergence then force $F\circ h$ to be
constant there.  The constants agree on overlaps; connectedness of $\Delta$ then makes $F\circ h$
constant on the whole disk.

For a connected compact analytic set $Z\subset M$, the regular locus of each
positive-dimensional irreducible component is connected and locally covered by
holomorphic disks, so the disk argument makes $F$ constant on that regular locus
and continuity extends the constancy to the entire component.  There are only
finitely many irreducible components.  Since $Z$ is connected, their intersection graph is
connected as well, and the constants agree from component to component.  The
zero-dimensional case is immediate.
\end{proof}

\subsection{Fiber collapse and branch separation}
We construct the completion map on a resolution, show that it is constant on
resolution fibers, and then separate distinct normalization branches.  This last
point is essential: branches above the same Chow cycle have
the same limiting measure, so their separation must come from the positive cost
of paths that leave a fixed neighborhood of the full normalization fiber.

Take a projective resolution
\[
 \rho:R\longrightarrow Y
\]
which is an isomorphism over $B$ and makes $A=R\setminus B$ SNC\@.  Set
\[
 f=\chi\circ\nu\circ\rho:R\longrightarrow\PP(V_{\rm Ch}),\qquad
 T_R=\frac1{D(m+1)}f^*\cT.
\]
The local H\"older potentials from \cref{prop:logholder} define this pullback.
Over $B$, \cref{prop:curvature} identifies $T_R$ with $\Omega_B$.
Local holomorphic coefficient representatives composed with $f$ remain Lipschitz
on compact coordinate sets, so these potentials remain H\"older on $R$.

Fix a chordal distance $h_R$ coming from a projective embedding of $R$.  We first
extend the completion map across the resolution.  For $r\in R$, choose any sequence $b_j\in B$
converging to $r$ in the analytic topology.  In a coordinate neighborhood of $r$,
\cref{lem:localdistance}, together with equivalence of the local Euclidean and
projective distances, implies
\[
 d_B(b_j,b_k)\le C_r h_R(b_j,b_k)^{\theta_r}\longrightarrow0
\]
for some $C_r<\infty$ and $\theta_r>0$, so $(b_j)$ is $d_B$-Cauchy.
Interlacing two sequences approaching the same $r$ shows that their completion
limits agree.  Set
\begin{equation}\label{eq:K}
 K(r):=\lim_{j\to\infty}b_j\in\widehat B.
\end{equation}
The preceding estimate also gives continuity of $K$; on $B$ it is the identity.
Fix any exponent $\alpha$ allowed by \cref{prop:logholder} for the compact Chow
family.  The coefficient maps on a finite coordinate cover of $R$ are uniformly
Lipschitz after enlarging the constants, so the same exponent $\alpha$ applies on
every chart.  After enlarging the constant, the finite cover establishes
\begin{equation}\label{eq:Kholder}
 d_{\widehat B}(K(r),K(s))\le C h_R(r,s)^{\alpha/2}.
\end{equation}
For pairs outside the corresponding local neighborhoods, compactness absorbs the
estimate, so the exponent $\alpha/2$ holds globally on $R$.  To prove surjectivity,
let $\xi\in\widehat B$ be represented by a $d_B$-Cauchy sequence $(b_j)\subset B$.  Compactness of $R$ allows us to extract a subsequence
converging analytically to some $r\in R$.  The construction shows that this subsequence converges in $\widehat B$ to $K(r)$.  Uniqueness of the
Cauchy limit forces $\xi=K(r)$, so $K:R\to\widehat B$ is onto.

Fix $y\in Y$ and let $E_y=\rho^{-1}(y)$.  The fiber $E_y$ is connected because
$Y$ is normal and $\rho$ is proper birational,
by Stein factorization \cite[\href{https://stacks.math.columbia.edu/tag/03H0}{Tag~03H0}]{Stacks}.  Choose a coefficient chart
at $\chi\nu(y)$ and its normalized holomorphic section representative.  The potential
\[
 u=\frac1{D(m+1)}\Psi_{\rm Ch}\circ f
\]
is constant on $E_y$. For every $p\in B$, \cref{lem:weakgradient,lem:null} imply that
$r\mapsto d_{\widehat B}(p,K(r))$ is constant on $E_y$.  Since distance functions
to the dense subset $B$ separate points of $\widehat B$,
\begin{equation}\label{eq:fibercollapse}
 K(r)=K(s)\quad\text{whenever }\rho(r)=\rho(s).
\end{equation}
Properness makes $\rho$ a quotient map, so $K$ factors through a continuous
surjection $\Phi:Y\to\widehat B$.

To prove injectivity, first suppose $\nu(y_1)\ne\nu(y_2)$.  The lower bound
\eqref{eq:lower} and injectivity of $C\mapsto\mu_C$ separate their completion
points.  Suppose instead that $y_1\ne y_2$ lie above the same $x\in X$.
The finite normalization fiber is $\nu^{-1}(x)=\{y_1,\ldots,y_a\}$.  Since
$\dim Y>0$, choose pairwise disjoint neighborhoods $U_i$ of the $y_i$ whose union
does not cover $Y$, and set $K_0=Y\setminus\bigcup_iU_i$.  Then $K_0$ is
nonempty and compact, while $\nu(K_0)$ avoids $x$.  Compactness and separation ensure
\[
 \delta=\inf_{z\in\nu(K_0)}W_2(\mu_z,\mu_x)>0.
\]
Both endpoint measures converge to $\mu_x$; separation therefore comes from the
positive cost of any connecting path that leaves a fixed neighborhood of the full normalization fiber.  Let $b_j\in B\cap U_1$ and $c_j\in B\cap U_2$ converge to $y_1$ and $y_2$,
respectively, and let $\Gamma_j\subset B$ be any piecewise smooth curve joining
$b_j$ to $c_j$; write $L_{g_B}(\Gamma_j)$ for its $g_B$-length.  Because the $U_i$ are pairwise disjoint, $\Gamma_j$ must leave their union; choose
$z_j\in\Gamma_j\cap K_0$.  Using \eqref{eq:lower} and the triangle
inequality in $W_2$,
\[
  \begin{aligned}
  L_{g_B}(\Gamma_j)
  &\ge d_B(b_j,z_j)
   \ge W_2(\mu_{b_j},\mu_{\nu(z_j)})\\
  &\ge W_2(\mu_x,\mu_{\nu(z_j)})-W_2(\mu_{b_j},\mu_x)
   \ge \delta-W_2(\mu_{b_j},\mu_x).
  \end{aligned}
\]
The continuity of $C\mapsto\mu_C$ in the Chow topology implies
$W_2(\mu_{b_j},\mu_x)\to0$.  Taking the infimum over all $\Gamma_j$ and then the
lower limit, we obtain
\[
  \liminf_{j\to\infty}d_B(b_j,c_j)\ge\delta>0.
\]
The two branches are therefore separated by a positive distance, which proves that
$\Phi$ is injective.

The compactness of $Y$ and metrizability of $\widehat B$ make $\Phi$ a
homeomorphism.  It is the unique continuous extension of the identity on $B$,
which also makes the construction independent of the chosen resolution.  The
completion of a length space is again a length space, and compactness then implies
geodesicity.

\subsection{H\"older control on the normalization}
To compare $\bar d$ with the projective geometry of $Y$, choose chordal distances
coming from projective embeddings of $R$ and $Y$.  On the compact real algebraic set $R\times R$, define
\[
 Z_\rho=\{(r,s):\rho(r)=\rho(s)\}.
\]
A compact semialgebraic \L{}ojasiewicz inequality gives constants
$C,\eta>0$ such that
\begin{equation}\label{eq:loja}
 \dist_{R\times R}((r,s),Z_\rho)
 \le C h_Y(\rho(r),\rho(s))^\eta.
\end{equation}
Indeed, the two nonnegative continuous semialgebraic functions
\[
 a(r,s):=\dist_{R\times R}((r,s),Z_\rho),
 \qquad
 b(r,s):=h_Y(\rho(r),\rho(s))
\]
on the compact set $R\times R$ have the same zero set $Z_\rho$; the usual
compact \L{}ojasiewicz comparison therefore yields $a^N\le Cb$ for some
integer $N\ge1$, which is equivalent to \eqref{eq:loja}.  Compare also the
related distance comparison in \cite[Proposition~4.6]{GGZ}.

Choose a nearest $(r',s')\in Z_\rho$.  As $K(r')=K(s')$,
\eqref{eq:Kholder} gives
\begin{align*}
 \bar d(\rho(r),\rho(s))
 &\le d_{\widehat B}(K(r),K(r'))+d_{\widehat B}(K(s'),K(s))\\
 &\le C\dist_{R\times R}((r,s),Z_\rho)^{\alpha/2}\\
 &\le C' h_Y(\rho(r),\rho(s))^{\eta\alpha/2}.
\end{align*}
Surjectivity of $\rho$ yields \eqref{eq:holdermetric} with
$\beta=\eta\alpha/2>0$.  The diameter estimate \eqref{eq:LC} follows by connecting
both endpoints through $y$ in the completed metric and using
$\bar d|_{B\times B}=d_B$.  This proves the metric normalization and H\"older
assertions of \cref{thm:main}, and hence \cref{thm:intro-main}.

\begin{corollary}
\label{cor:chow-inner-compactness}
For $x_0,x_1\in X$, define
\begin{equation}\label{eq:chow-inner-action}
  D_X(x_0,x_1)^2
  :=
  \inf_{\substack{\gamma\in C([0,1],X),\ \gamma(0)=x_0,\ \gamma(1)=x_1\\
                     t\mapsto\mu_{\gamma(t)}\in AC^2(W_2)}}
  \int_0^1
  \left|\frac d{dt}\mu_{\gamma(t)}\right|_{W_2}^2\,dt.
\end{equation}
Then $D_X$ is a finite metric on $X$, induces the usual Chow topology, and makes
$X$ compact.  For every $y_0,y_1\in Y=X^\nu$,
\begin{equation}\label{eq:chow-inner-upper}
  D_X\bigl(\nu(y_0),\nu(y_1)\bigr)
  \le \bar d(y_0,y_1).
\end{equation}
When $X$ is nonnormal, $D_X$ forgets which normalization branch is followed,
whereas $\bar d$ retains that information.
\end{corollary}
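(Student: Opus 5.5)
The plan is to compare $D_X$ with $\bar d$ via paths in $Y$, and to get the metric axioms and the topology from the ambient lower bound $W_2\le D_X$.

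\textbf{Upper bound.} Fix $y_0,y_1\in Y$. Since $(Y,\bar d)$ is compact and geodesic by \cref{thm:main}, choose a constant-speed $\bar d$-geodesic $\gamma:[0,1]\to Y$ from $y_0$ to $y_1$. Then $\nu\circ\gamma\in C([0,1],X)$, and $t\mapsto\mu_{\nu\gamma(t)}=\mathcal M(\gamma(t))$ is an admissible competitor in \eqref{eq:chow-inner-action}. To bound its action I would invoke the ambient action identity (\cref{thm:intro-action}, proved later in \cref{thm:full-action}). A geodesic is Lipschitz, hence $AC^2$, so $\mu_\cdot\in AC^2(W_2)$ and $|\dot\mu_t|_{W_2}=|\dot\gamma(t)|_{\bar d}=\bar d(y_0,y_1)$ for a.e. $t$. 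This gives \eqref{eq:chow-inner-upper}, and finiteness of $D_X$ follows.

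If I want to avoid the forward reference, I would proceed more elementarily. First, $W_2(\mathcal M(y),\mathcal M(z))\le\bar d(y,z)$, obtained by passing \eqref{eq:lower} to limits via \eqref{eq:limitmetric} and continuity of $C\mapsto\mu_C$. Hence $\mathcal M\circ\gamma$ is $\bar d(y_0,y_1)$-Lipschitz into $W_2$, so its metric speed is at most $\bar d(y_0,y_1)$ a.e. This inequality suffices, and I would use this version.

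\textbf{Metric axioms and topology.} Symmetry and the triangle inequality follow from reversing and concatenating paths; for the triangle inequality I would reparametrize to constant speed, using that the infimum of the action equals the squared infimum of $W_2$-length. For nondegeneracy: any admissible path satisfies
\[
W_2(\mu_{x_0},\mu_{x_1})\le\int_0^1|\dot\mu_t|\,dt\le\Bigl(\int_0^1|\dot\mu_t|^2\,dt\Bigr)^{1/2},
\]
so $D_X\ge W_2(\mu_{x_0},\mu_{x_1})$. Injectivity of $C\mapsto\mu_C$ then gives $D_X(x_0,x_1)>0$ for $x_0\ne x_1$.

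For the topology, I would show that the identity $X\to(X,D_X)$ is continuous. Given $x_j\to x$ in the Chow topology, surjectivity and finiteness of $\nu$ let me pick $y_j\in\nu^{-1}(x_j)$. A subsequence converges to some $y\in\nu^{-1}(x)$, and then $D_X(x_j,x)\le\bar d(y_j,y)\to0$ by \eqref{eq:holdermetric}. Because every subsequence has such a further subsequence, the full sequence converges. Thus the identity from the compact Chow topology to the Hausdorff topology of $D_X$ is a continuous bijection from a compact space, hence a homeomorphism, and $(X,D_X)$ is compact.

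\textbf{Branch information.} I would note that if $y_1\ne y_2$ lie over the same $x$, then $D_X(\nu y_1,\nu y_2)=0$ while $\bar d(y_1,y_2)>0$ by the branch separation established in the proof of \cref{thm:main}. Such pairs exist exactly when $\nu$ fails to be injective, which is the typical nonnormal situation, for instance at nodal branches. The main obstacle is justifying that the $W_2$-Lipschitz bound on $\mathcal M$ holds on all of $Y$; the limiting argument through \eqref{eq:limitmetric} handles this cleanly.
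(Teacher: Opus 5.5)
Your proof is correct and follows the paper's argument: the bound $W_2(\mathcal M(y),\mathcal M(z))\le\bar d(y,z)$, obtained from \eqref{eq:lower} by passing to the completion, makes the measure image of a constant-speed $\bar d$-geodesic $\bar d(y_0,y_1)$-Lipschitz, which gives \eqref{eq:chow-inner-upper} and finiteness, while $W_2\le D_X$ together with injectivity of $C\mapsto\mu_C$ gives nondegeneracy. The only difference is in the topology step: you prove continuity of the identity from the Chow topology to $D_X$ by a subsequence argument and conclude with the compact-to-Hausdorff principle, whereas the paper obtains compactness as the continuous $\nu$-image of $(Y,\bar d)$ and gets the reverse comparison from the chain $D_X$-convergence $\Rightarrow$ $W_2$-convergence $\Rightarrow$ Chow convergence.
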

\begin{proof}
The inequality $W_2(\mu_{\nu(y)},\mu_{\nu(z)})\le\bar d(y,z)$ follows from
\eqref{eq:lower} by passage to the metric completion.  Let $\eta$ be a
constant-speed $\bar d$-geodesic from $y_0$ to $y_1$.  Its measure image is
$\bar d(y_0,y_1)$-Lipschitz in $W_2$ and belongs to $AC^2$, while the
projected curve $\nu\circ\eta$ is admissible in \eqref{eq:chow-inner-action}.
Its action is at most $\bar d(y_0,y_1)^2$, proving
\eqref{eq:chow-inner-upper} and finiteness of $D_X$.

Symmetry follows by time reversal, and the triangle inequality by the usual
constant-speed reparametrization and concatenation after rescaling time.
Moreover,
\[
  W_2(\mu_x,\mu_{x'})\le D_X(x,x'),
\]
so injectivity of the cycle measure map at fixed degree shows that $D_X$ separates
points.  Equation \eqref{eq:chow-inner-upper} makes
$\nu:(Y,\bar d)\to(X,D_X)$ a continuous surjection, so compactness of
$(Y,\bar d)$ gives compactness of $(X,D_X)$.  Conversely, $D_X$-convergence
implies $W_2$-convergence of the cycle measures.  The continuous injection
$x\mapsto\mu_x$ from compact $X$ into the Hausdorff space
$\mathcal P_2(\PP^n)$ is a homeomorphism onto its image.
These two implications identify the $D_X$-topology with the original Chow
topology.
\end{proof}

\begin{corollary}\label{thm:effective-holder}
For the Chow family set
\begin{equation}\label{eq:cch}
 c_{\rm Ch}(X):=\inf_{([s],L)\in\chi(X)\times G}c_L(s)>0,
\end{equation}
where $s$ denotes any nonzero representative of the projective Chow form $[s]$;
the exponent $c_L(s)$ is unchanged by rescaling.  Let $\lambda_\rho>0$ satisfy
\begin{equation}\label{eq:lambda-rho}
 \dist_{R\times R}((r,s),Z_\rho)
 \le C h_Y(\rho(r),\rho(s))^{\lambda_\rho}.
\end{equation}
Then every
\begin{equation}\label{eq:beta-effective}
 0<\beta<\lambda_\rho\min\left\{c_{\rm Ch}(X),\frac12\right\}
\end{equation}
is admissible in \eqref{eq:holdermetric}, after changing the multiplicative
constant.  If $c_{\rm Ch}(X)>1/2$, the endpoint
$\beta=\lambda_\rho/2$ is admissible as well.
\end{corollary}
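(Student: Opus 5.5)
The plan is to rerun the argument of the H\"older subsection with the exponents tracked explicitly. The only change is that the generic exponent $\alpha$ of \cref{prop:logholder}, applied to the whole linear system $V_{\rm Ch}$, is replaced by one controlled by $c_{\rm Ch}(X)$. The first step is positivity of \eqref{eq:cch}. The set $\chi(X)\times G$ is compact, and $c_L(s)$ is lower semicontinuous in $([s],L)$ by Demailly--Koll\'ar. Since each $c_L(s)$ is positive by \cref{lem:negative}, the infimum is attained and is positive.

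The second step is a version of \cref{prop:logholder} restricted to the family $\chi(X)$. The proof of \cref{prop:logholder} only used the uniform integrability \eqref{eq:uniform-cse-integral} on the compact coefficient set in question, not on all of $\PP(V)$. So I would take $K$ to be the normalized representatives over the cone on $\chi(X)$ in a finite chart cover. The Demailly--Koll\'ar semicontinuity theorem, applied near each $([s],L)\in\chi(X)\times G$, then gives $\sup_{s\in K}\int_G\|s\|^{-2c}\,d\nu_G<\infty$ for every $c<c_{\rm Ch}(X)$; compactness reduces this to finitely many neighborhoods. One subtlety must be handled here. The difference estimate integrates \eqref{eq:log-difference} for pairs $s,t$, and both $s$ and $t$ must lie in the controlled set. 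Since we only compare $\Psi_{\rm Ch}\circ f$ at points of $R$, both representatives lie over $\chi(X)$, so this is fine. The argument then yields $|\Psi(s)-\Psi(t)|\le C_\alpha\|s-t\|^\alpha$ on $\chi(X)$ for every $\alpha\le1$ with $\alpha<2c_{\rm Ch}(X)$, and also for $\alpha=1$ when $c_{\rm Ch}(X)>1/2$.

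The third step transfers this to $R$. The local potential $u=\frac1{D(m+1)}\Psi_{\rm Ch}\circ f$ is then $C^{0,\alpha}$ on coordinate charts of $R$, because the coefficient maps are Lipschitz. I need \cref{lem:localdistance} to use only H\"older continuity of $u$ at points of $U$, which is what the mass estimate \eqref{eq:mass} uses (it subtracts $u(x)$ and tests against a cutoff). This gives \eqref{eq:Kholder} with exponent $\alpha/2$. Finally, the chain of inequalities following \eqref{eq:loja}, run with $\lambda_\rho$ in place of $\eta$, yields $\beta=\lambda_\rho\alpha/2$. Letting $\alpha$ range over $(0,\min\{1,2c_{\rm Ch}(X)\})$ gives every $\beta<\lambda_\rho\min\{c_{\rm Ch}(X),1/2\}$. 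The case $\alpha=1$ gives the endpoint $\lambda_\rho/2$.

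I expect the main obstacle to be the second step. There I must make sure that the semicontinuity theorem is applied uniformly along the possibly non-open family $\chi(X)$, and that the pointwise bound $\sup_M|\|s\|-\|t\|\|\le C\|s-t\|$ combined with the negative moments for both $s$ and $t$ suffices without leaving $\chi(X)$. The remaining steps reuse estimates already proved.
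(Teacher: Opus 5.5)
Your proposal is correct and follows essentially the same route as the paper's proof. Demailly--Koll\'ar gives uniform negative moments of order $2c<2c_{\rm Ch}(X)$ on the compact Chow family; the logarithmic difference estimate of \cref{prop:logholder} then gives $C^{0,\alpha}$ potentials for $\alpha\le1$, $\alpha<2c_{\rm Ch}(X)$ (with $\alpha=1$ when $c_{\rm Ch}(X)>1/2$); \cref{lem:localdistance} plus a finite cover yields exponent $\alpha/2$ for $K$; and the \L{}ojasiewicz comparison produces $\beta=\lambda_\rho\alpha/2$. Your remark that both sections in every compared pair lie over $\chi(X)$ is exactly what the paper leaves implicit when it restricts the moment bound to the Chow family.
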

\begin{proof}
For every $0<c<c_{\rm Ch}(X)$, Demailly--Koll\'ar semicontinuity provides a
uniform negative moment of order $2c$ on the compact Chow family.  The direct
logarithmic estimate in \cref{prop:logholder} applies to every local exponent
\[
 0<\alpha\le1,\qquad \alpha<2c_{\rm Ch}(X),
\]
with $\alpha=1$ allowed when $c_{\rm Ch}(X)>1/2$.  Repeating the local-distance argument with this exponent and then passing to a
finite cover gives
\[
 d_{\widehat B}(K(r),K(s))\le C h_R(r,s)^{\alpha/2}.
\]
Together with \eqref{eq:lambda-rho}, this shows that every
$\beta<\lambda_\rho\min\{c_{\rm Ch}(X),1/2\}$ is admissible in
\eqref{eq:holdermetric}.  If
$c_{\rm Ch}(X)>1/2$, taking $\alpha=1$ also allows the endpoint
$\beta=\lambda_\rho/2$.
\end{proof}

\begin{corollary}\label{cor:smooth-normalization-holder}
Assume that $Y=X^\nu$ is smooth.  Then for every chordal distance $h_Y$ induced
by a projective embedding of $Y$ and every
\[
  0<\beta<\min\left\{c_{\rm Ch}(X),\frac12\right\}
\]
there is $C_\beta<\infty$ such that
\begin{equation}\label{eq:smooth-normalization-holder}
  \bar d(y,z)\le C_\beta h_Y(y,z)^\beta
  \qquad (y,z\in Y).
\end{equation}
If $c_{\rm Ch}(X)>1/2$, the endpoint $\beta=1/2$ is admissible.
\end{corollary}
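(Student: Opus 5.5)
The plan is to derive \cref{cor:smooth-normalization-holder} from \cref{thm:effective-holder}. Since $Y$ is smooth, no resolution is needed. We may take $R=Y$ and $\rho=\id$ in the construction of \cref{sec:normalization}, provided the boundary $A=Y\setminus B$ can be made SNC. If it cannot, we instead choose an embedded resolution $\rho:R\to Y$ that is an isomorphism over $B$ and makes $A$ SNC. Then we exploit the smoothness of $Y$ directly, not through a \L{}ojasiewicz exponent $\lambda_\rho$ that might be smaller than one.

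The cleanest route avoids $\rho$ altogether. Weak-gradient rigidity (\cref{lem:weakgradient,lem:null}) is used only to collapse fibers of $\rho$, and the fibers are trivial when $\rho=\id$. What remains is the local H\"older estimate for $K$, and that needs only \cref{lem:localdistance}. That lemma requires $A$ to be a proper analytic subset of a Euclidean ball, not an SNC divisor. So the steps are as follows.

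First, cover the compact complex manifold $Y$ by finitely many coordinate balls. On each ball the current $T_Y=\frac1{D(m+1)}(\chi\circ\nu)^*\cT$ has potential $u=\frac1{D(m+1)}\Psi_{\rm Ch}\circ(\chi\circ\nu)$. This potential is $C^{0,\alpha}$ for every $0<\alpha\le1$ with $\alpha<2c_{\rm Ch}(X)$. The reason is that \cref{prop:logholder} applies on the compact Chow family with uniform exponent $c_{\rm Ch}(X)$, and the holomorphic coefficient maps are Lipschitz on compact coordinate sets.

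Second, by \cref{prop:curvature}, $T_Y$ is the smooth K\"ahler form $\Omega_B$ on $B$, and $Y\setminus B$ is a proper analytic subset of $Y$. \cref{lem:localdistance} then gives, for nearby $b,c\in B$ in a shrunken ball,
\[
 d_B(b,c)\le C|b-c|^{\alpha/2}.
\]

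Third, pass to the completion. By \eqref{eq:limitmetric}, $\bar d(y,z)=\lim d_B(b_j,c_j)$ for any $b_j\to y$ and $c_j\to z$ in $B$, and $B$ is dense. Hence $\bar d(y,z)\le C|y-z|^{\alpha/2}$ locally. Local Euclidean distance is comparable to $h_Y$ on each chart, and a Lebesgue-number argument covers pairs too close to lie in different charts. Pairs farther apart than the Lebesgue number are absorbed using boundedness of $\bar d$, which holds because $(Y,\bar d)$ is compact, together with $h_Y\ge$ const for such pairs.

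Setting $\beta=\alpha/2$ gives every $\beta<\min\{c_{\rm Ch}(X),1/2\}$. When $c_{\rm Ch}(X)>1/2$, \cref{prop:logholder} allows $\alpha=1$, which yields the endpoint $\beta=1/2$.

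The main obstacle I expect is the step where \cref{lem:localdistance} is applied to nearby $b,c$. The lemma only promises a curve for ``sufficiently nearby'' points in a smaller ball, with uniform constants. I need that uniformity across the finite cover, and I need the balls to be chosen so that the H\"older constant of $u$ is uniform. I would handle this exactly as in the derivation of \eqref{eq:Kholder}, now with $R=Y$.

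Alternatively, I could keep a resolution $\rho$ and note that $\lambda_\rho=1$ when $Y$ is smooth. That version requires a Lipschitz comparison $\dist((r,s),Z_\rho)\lesssim h_Y(\rho(r),\rho(s))$, which is not obvious. So I prefer the direct argument on $Y$.
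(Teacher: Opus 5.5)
Your direct argument is correct and is essentially the paper's proof. Like the paper, you apply \cref{lem:localdistance} in coordinate balls of the smooth space $Y$, with singular set $A=Y\setminus B$ and the $C^{0,\alpha}$ Chow potential for $0<\alpha\le1$, $\alpha<2c_{\rm Ch}(X)$ (with $\alpha=1$ allowed when $c_{\rm Ch}(X)>1/2$), pass to a finite cover and to the completion, and take $\beta=\alpha/2$. Your extra details (density plus \eqref{eq:limitmetric}, and the Lebesgue-number and boundedness argument for distant pairs) spell out what the paper compresses into ``passing to a finite cover.''
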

\begin{proof}
Set $A:=Y\setminus B$.  Because $Y$ is smooth and $B$ is Zariski open, $A$ is a
proper analytic subset.  The current
\[
  T_Y:=\frac1{D(m+1)}(\chi\nu)^*\cT
\]
has locally H\"older potentials on all of $Y$ and, by
\cref{prop:curvature}, is the smooth K\"ahler form $\Omega_B$ on $B$.  For every
$0<\alpha\le1$ with $\alpha<2c_{\rm Ch}(X)$, the Chow potential is locally
$C^{0,\alpha}$; when $c_{\rm Ch}(X)>1/2$ one may take $\alpha=1$.  Applying \cref{lem:localdistance} in coordinate balls of $Y$ with singular set
$A$ and then passing to a finite cover, we obtain
\[
  \bar d(y,z)\le C h_Y(y,z)^{\alpha/2}
\]
directly on $Y$.  Taking $\beta=\alpha/2$ proves the stated range and endpoint.
\end{proof}

With the metric normalization and H\"older conclusions established, we record
two applications before returning in \cref{sec:singular-strata} to the general
cycle family.

\subsection{Inner Wasserstein geometry of hypersurfaces}

For hypersurfaces the Chow-form identification is
\[
  P_{n,d}:=\PP H^0(\PP^n,\mathcal O_{\PP^n}(d))
  \simeq \Chow_{n-1,d}(\PP^n).
\]
The parameter space $P_{n,d}$ is smooth and normal.  Over the smooth locus
$B=P_{n,d}\setminus\Delta_{n,d}$, the universal hypersurface
\[
  \mathcal Z_B=\{(p,z)\in B\times\PP^n:p(z)=0\}\longrightarrow B
\]
is smooth and proper because the derivative in the $\PP^n$ direction never
vanishes along a fiber.  Its connected components can be labelled locally, so
$B$ is admissible.

\begin{lemma}
\label{lem:hyp-critical-potential}
Fix a Hermitian norm on $H^0(\PP^n,\mathcal O(d))$, and let
$d\lambda=\omega_0^n$ be the probability Fubini--Study volume measure for the
normalization with $\int_{\PP^1}\omega_0=1$.  There are constants $C,r_0>0$ such that every
normalized nonzero degree-$d$ polynomial $p$ satisfies
\begin{equation}\label{eq:critical-sublevel}
  \lambda\bigl\{z:\|p(z)\|_{\mathcal O(d)}<r\bigr\}
  \le C r^{2/d},
  \qquad 0<r<r_0.
\end{equation}
Moreover,
\begin{equation}\label{eq:hypersurface-cch}
  c_{\rm Ch}(P_{n,d})=\frac1d.
\end{equation}
If $d\ge2$, the hypersurface Chow potential is locally $C^{0,2/d}$ on
$P_{n,d}$; in particular, it is locally Lipschitz when $d=2$.
\end{lemma}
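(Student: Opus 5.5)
The plan is to make everything explicit by exploiting the fact that for hypersurfaces the Chow form \emph{is} the defining polynomial.  When $m=n-1$ we have $G=\Gr(1,\C^{n+1})=\PP^n$, $\mathcal O_G(D)=\mathcal O_{\PP^n}(d)$, and $R_C=p$ up to scalar.  Hence $c_{\rm Ch}(P_{n,d})$ is the infimum over nonzero degree-$d$ polynomials $p$ and points $x$ of the complex singularity exponent $c_x(p)$.

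\emph{The value $c_{\rm Ch}=1/d$.}  For the lower bound I will use Skoda's estimate $c_x(p)\ge1/\operatorname{ord}_x(p)$ together with $\operatorname{ord}_x(p)\le d$.  For the upper bound I take $p=\ell^d$ with $\ell$ linear: near a point of $\{\ell=0\}$, $|\ell|^{-2c d}$ is integrable exactly when $cd<1$, so $c_x(\ell^d)=1/d$.  This proves \eqref{eq:hypersurface-cch}.

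\emph{The sublevel estimate \eqref{eq:critical-sublevel}.}  I reduce to one complex variable by Fubini.
\begin{itemize}
\item Choose finitely many points $e_1,\dots,e_K\in\PP^n$ not lying on any common degree-$d$ hypersurface.  By compactness of the unit sphere of normalized polynomials, $\max_k|p(e_k)|\ge c_0>0$ uniformly in $p$.
\item For each $k$, cover $\PP^n$ by finitely many affine charts in which the lines through $e_k$ become parallel lines in direction $e_k$.  On each such line, $p$ restricts to a one-variable polynomial of degree $d$ whose leading coefficient is $p(e_k)$, with $|p(e_k)|\ge c_0$ for the good index $k$.
\item On relatively compact chart pieces the Hermitian weight is comparable to $|p|$.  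Cartan's lemma then gives $\operatorname{Area}\{z_1:|p|<r\}\le C(r/c_0)^{2/d}$ on each line.
\item Integrating over the transverse variables and summing over the finitely many pieces yields \eqref{eq:critical-sublevel}, uniformly in $p$.
\end{itemize}

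\emph{Hölder regularity for $d\ge3$.}  I will not use \eqref{eq:log-difference} at the endpoint, since $\int\|s\|^{-2/d}$ may diverge there.  Instead, for normalized $s,t$ set $\eps=\|s-t\|$ and split $\PP^n$ into two regions.
\begin{itemize}
\item On $\{\|s\|<2C\eps\}\cup\{\|t\|<2C\eps\}$, the tail bound \eqref{eq:tail} with $\eta=2/d$, applied to each of $s$ and $t$, contributes $O(\eps^{2/d})$.
\item On the complement, $\|t\|$ and $\|s\|$ are comparable, so $|\log\|s\|-\log\|t\||\le C\eps/\|s\|$.  By the layer-cake formula and \eqref{eq:critical-sublevel},
\[
\eps\int_{\|s\|\ge\eps}\|s\|^{-1}\,d\lambda\le C\eps\Bigl(1+\int_\eps^{1}r^{2/d-2}\,dr\Bigr)\le C\eps^{2/d}.
\]
\end{itemize}
Together these give $C^{0,2/d}$ for $d\ge3$.

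\emph{The case $d=2$ (main obstacle).}  Here the same argument only yields $\eps\log(1/\eps)$; indeed $\{|z_1z_2|<r\}$ has measure of order $r\log(1/r)$, so the sublevel estimate alone cannot give Lipschitz control.  I plan to use the symmetric-root structure instead.
\begin{itemize}
\item On each chart line as above, write $p=a(z-\zeta_1)(z-\zeta_2)$.  The line contribution to $\Psi$ is then $\log|a|+U(\zeta_1)+U(\zeta_2)$ plus smooth weight terms, where $U(\zeta)=\int_{|z|<r}\log|z-\zeta|\,dA$ is a logarithmic potential with bounded Laplacian, hence $C^{1,1}$.
\item The pair $\{\zeta_1,\zeta_2\}$ depends smoothly on the symmetric coordinates $(\sigma,w)$, where $\zeta_{1,2}=\sigma\pm\sqrt w$ and $\sigma,w$ are polynomial in the coefficients.
\item Taylor expansion gives $U(\sigma+\sqrt w)+U(\sigma-\sqrt w)=2U(\sigma)+O(|w|)$.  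The odd term cancels, and the $C^{1,1}$ bound controls the remainder even though the roots are only Hölder-$1/2$ in the coefficients.
\end{itemize}
Making this cancellation precise yields a Lipschitz bound in $(\sigma,w)$, and hence in the coefficients, uniformly in the transverse parameters.  Integrating over lines proves that the potential is locally Lipschitz for $d=2$.
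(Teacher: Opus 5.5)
Most of your plan is sound, and for the sublevel estimate you take a different route from the paper. The paper applies parameter-dependent Weierstrass preparation near each coefficient--point pair $(p_0,x_0)$, obtains $r^{2/a}$ with $a\le d$ the local vanishing order, and concludes by compactness. You instead use a unisolvent set $e_1,\dots,e_K$ to get $\max_k|p(e_k)|\ge c_0$, and restrict to the pencil of lines through a good $e_k$. There $p$ becomes an honest degree-$d$ polynomial with leading coefficient $p(e_k)$, so covering by root disks gives $r^{2/d}$ directly. This is more elementary, and it also supplies a global normal form for $d=2$: always exactly two roots, no Weierstrass units, no degree-one factors.

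Two small repairs are needed there. First, a chart in which the lines through $e_k$ are parallel has $e_k$ at infinity, so such charts never cover $\PP^n$. Handle a fixed ball around $e_k$ separately: there $\|p\|\ge c_0/2$ uniformly, so it contributes nothing for small $r$. Second, in the regularity estimates fix $k$ from $p_0$, so that nearby $p,q$ use the same decomposition. Your Skoda lower bound for $c_{\rm Ch}$ (which also follows from the sublevel estimate by layer cake) replaces the paper's negative-moment argument, and your $d\ge3$ splitting is essentially the paper's.

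The genuine gap is in the $d=2$ step, exactly where the endpoint is decided.

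(i) The quantity to control is not the unweighted disk potential $U$. On a chart piece the root contribution is $G_w(\zeta)=\int g(w,z)\log|z-\zeta|\,dA(z)$, where $g$ is the cutoff times the Fubini--Study density. This density multiplies the logarithm; it is not an additive ``weight term''.

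(ii) ``Bounded Laplacian, hence $C^{1,1}$'' is false: $(x_1^2-x_2^2)\log|x|$ has bounded Laplacian and unbounded Hessian. A bounded Laplacian only gives $C^{1,\alpha}$ for $\alpha<1$, and with that your symmetric-root argument yields exponent $(1+\alpha)/2$, not $1$. The paper's fix is to take $g\in C_c^\infty$ (a smooth partition of unity independent of $p$) and move derivatives onto it, $D^\alpha G_w(\zeta)=\int\log|z-\zeta|\,D_z^\alpha g(w,z)\,dA(z)$. This gives uniform $C^2$ bounds.

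(iii) Taylor expansion at $\sigma$ only gives $H(\sigma,w)=H(\sigma,0)+O(|w|)$, i.e.\ Lipschitz control relative to $w=0$. For nearby $w,w'$ with $|w-w'|\ll|w|$ you need $|E_\sigma(s)-E_\sigma(t)|\le M_2(|s|+|t|)|s-t|$ for $E_\sigma(v)=G(\sigma+v)+G(\sigma-v)$. Combine this with square roots $s^2=w$, $t^2=w'$ chosen so that $\operatorname{Re}(s\bar t)\ge0$; then $(|s|+|t|)|s-t|\le\sqrt2\,|s+t||s-t|=\sqrt2\,|w-w'|$, and varying the center costs $2M_1|\sigma-\sigma'|$. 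Note that the root pair $\sigma\pm\sqrt w$ is not smooth at $w=0$; only this symmetric combination is regular.

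With (i)--(iii) supplied, your line-by-line argument does prove local Lipschitz continuity for $d=2$, along the same lines as the paper.
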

\begin{proof}
Work first near a fixed normalized coefficient--point pair $(p_0,x_0)$.  If
$p_0(x_0)\ne0$, then after shrinking the coefficient--point neighborhood the
section norm is uniformly bounded away from zero, so
\eqref{eq:critical-sublevel} is trivial there for sufficiently small $r$.  We may
therefore assume $p_0(x_0)=0$.  Choose an affine chart centered at $x_0$ and a
complex affine-linear spatial change of coordinates so that the restriction of
$p_0$ to the $z_1$-axis is not identically zero.  Its vanishing order there is
some $1\le a\le d$.  The parameter-dependent Weierstrass preparation theorem,
as in \cref{lem:negative}, then gives a factorization uniform in the parameters,
\[
  p(z)=u(p,z)\prod_{j=1}^a(z_1-\zeta_j),
  \qquad 1\le a\le d,
\]
with $|u|\ge c_0>0$ and all roots in a fixed bounded set.  If $|p(z)|<r$, then
at least one factor satisfies
\[
  |z_1-\zeta_j|<(r/c_0)^{1/a}.
\]
For each fixed $z'$, the corresponding one-variable sublevel set is
contained in the union of at most $a$ disks and has area at most
$Cr^{2/a}\le Cr^{2/d}$ for $0<r<1$.  Integrating in $z'$ and covering the compact product of the normalized coefficient
sphere with $\PP^n$ by finitely many such charts proves
\eqref{eq:critical-sublevel}.

For every $c<1/d$, the same Weierstrass chart has
$a(2c)\le2dc<2$, so the negative moment argument of \cref{lem:negative} is
uniform and proves $c_{\rm Ch}(P_{n,d})\ge1/d$.  Conversely, for $p=L^d$ at a
regular point of the hyperplane $L=0$ one has
$|p|^{-2c}\asymp|z_1|^{-2dc}$, which is locally integrable precisely when
$c<1/d$.  The two inequalities give \eqref{eq:hypersurface-cch}.

Suppose first that $d\ge3$.  Put $\kappa=2/d<1$ and work on a fixed coefficient
chart, where
\[
  \Psi(p)=\int_{\PP^n}\log\|p(z)\|\,d\lambda(z).
\]
For $\varepsilon>0$ define the truncated potential
\[
  \Psi_\varepsilon(p)
  :=\int_{\PP^n}\log\max\{\|p(z)\|,\varepsilon\}\,d\lambda(z).
\]
The layer-cake formula and \eqref{eq:critical-sublevel} imply the uniform estimate
\begin{equation}\label{eq:critical-log-tail}
  0\le\Psi_\varepsilon(p)-\Psi(p)
  =\int_0^\varepsilon
    \lambda\{\|p\|<r\}\,\frac{dr}{r}
  \le C\varepsilon^\kappa.
\end{equation}
If $p,q$ lie in a fixed compact coefficient chart and
$\delta=\|p-q\|$, finite dimensionality implies
$\sup_z|\|p(z)\|-\|q(z)\||\le A\delta$.  Set
$\varepsilon=A\delta$ and assume $\delta$ is small.  On the set where either
norm is below $2\varepsilon$, the two truncated logarithms differ by a bounded
constant and \eqref{eq:critical-sublevel} bounds this contribution by
$O(\varepsilon^\kappa)$.  On the complementary set the mean-value theorem implies
\[
  |\log\|p\|-\log\|q\||
  \le C\varepsilon\bigl(\|p\|^{-1}+\|q\|^{-1}\bigr).
\]
For either $f=\|p\|$ or $f=\|q\|$, layer cake applied to $1/f$ on
$\{f\ge2\varepsilon\}$ and the sublevel estimate imply
\[
  \int_{\{f\ge2\varepsilon\}}\frac{d\lambda}{f}
  \le C\left(1+\int_{2\varepsilon}^{r_0}r^{\kappa-2}\,dr\right)
  \le C\varepsilon^{\kappa-1}.
\]
Combining this with \eqref{eq:critical-log-tail}, we find
\[
  |\Psi(p)-\Psi(q)|\le C\delta^{2/d}.
\]
Thus the potential is locally $C^{0,2/d}$ for $d\ge3$.

The endpoint $d=2$ requires a different argument because the critical negative
moment need not be finite.  Fix $p_0\ne0$ and a relatively compact coefficient
neighborhood of $p_0$.  Using the same parameter-dependent preparation as
above, compactness of $\PP^n$ gives finitely many spatial preparation charts,
together with nonvanishing charts, and a smooth partition of unity independent
of the coefficient $p$.  In one preparation chart write the spatial variables
as $(z,\zeta)\in\C\times\C^{n-1}$, with the $\zeta$ variable absent when $n=1$.
Absorbing the partition function and smooth volume density leaves a weighted
local integral with a fixed function $g(z,\zeta)\in C_c^\infty$.
The logarithm of the Weierstrass unit is smooth in all parameters and the
Hermitian frame factor is independent of $p$, so only the monic factor needs
attention.

For fixed $\zeta$ define
\[
  G_\zeta(a):=\int_{\C}g(z,\zeta)\log|z-a|\,dA(z).
\]
The functions $G_\zeta$ have uniform $C^2$ bounds for $a$ and $\zeta$ in the
relevant compact sets.  Indeed, $\log|\cdot|$ is locally integrable and, for
every real multi-index $|\alpha|\le2$, distributional integration by parts gives
\[
  D_a^\alpha G_\zeta(a)
  =\int_{\C}\log|z-a|\,D_z^\alpha g(z,\zeta)\,dA(z).
\]
The $z$-supports lie in one compact set and the derivatives $D_z^\alpha g$ are
uniformly bounded in $\zeta$; uniform local integrability of translates of
$\log|\cdot|$ therefore yields the required $C^2$ bounds.  A degree-one
Weierstrass factor $z-r_p(\zeta)$ consequently contributes a locally Lipschitz
function of $p$.

For a degree-two factor write
\[
  z^2+b_p(\zeta)z+d_p(\zeta)
  =(z-c_p(\zeta))^2-w_p(\zeta),
  \qquad
  c_p=-\frac{b_p}{2},\quad w_p=\frac{b_p^2}{4}-d_p.
\]
The functions $c_p,w_p$ depend holomorphically on the coefficient parameters
and are uniformly Lipschitz there.  We use the following elementary symmetric
root estimate.  If $G\in C^2(\C)$ and
\[
  H(c,w)=G(c+s)+G(c-s),\qquad s^2=w,
\]
then $H$ is locally Lipschitz even at $w=0$.  More precisely, if $M_1,M_2$
bound $\|DG\|$ and $\|D^2G\|$ on the relevant compact set, then
\begin{equation}\label{eq:symmetric-root-estimate}
  |H(c,w)-H(c',w')|
  \le 2M_1|c-c'|+\sqrt2\,M_2|w-w'|.
\end{equation}
To see this, choose $s^2=w$ and $t^2=w'$ and change the sign of $t$ so that
$\operatorname{Re}(s\overline t)\ge0$.  Then
$|s+t|\ge(|s|+|t|)/\sqrt2$.  For
$E_c(v)=G(c+v)+G(c-v)$ one has
$\|DE_c(v)\|\le2M_2|v|$, and integration along the segment from $t$ to $s$
gives
\[
 |E_c(s)-E_c(t)|
 \le M_2(|s|+|t|)|s-t|
 \le\sqrt2 M_2|w-w'|.
\]
Varying the center contributes at most $2M_1|c-c'|$, proving
\eqref{eq:symmetric-root-estimate}.

Applying this estimate to the uniformly $C^2$ family $G_\zeta$, the two-root
contribution
\[
  G_\zeta(c_p(\zeta)+s_p(\zeta))
  +G_\zeta(c_p(\zeta)-s_p(\zeta)),
  \qquad s_p(\zeta)^2=w_p(\zeta),
\]
is locally Lipschitz in $p$, uniformly in $\zeta$; no continuous labeling of
the two roots is required.  Integrating in $\zeta$ and summing the fixed finite
partition shows
\[
  |\Psi(p)-\Psi(q)|\le C\|p-q\|
\]
near $p_0$.  The quadratic Chow potential is therefore locally Lipschitz.  This
argument does not assert finiteness of the critical moment
$\int\|p\|^{-1}\,d\lambda$, which indeed diverges at a double hyperplane; the
gain comes from summing the two roots after convolution with the smooth spatial
density.
\end{proof}

The preceding potential estimate now yields the sharp inner-distance bounds.

\begin{theorem}[Sharp inner $W_q$ geometry]
\label{thm:acl-compactness}
Let $n,d\ge1$ and $1\le q\le2$, and let $W_{q,\mathrm{ACL}}^{\rm in}$ be the
inner $q$-Wasserstein distance introduced in \cite{ACL}, with the Fubini--Study
normalization used there.  Then $W_{q,\mathrm{ACL}}^{\rm in}$ induces the usual
projective topology on $P_{n,d}$; in particular
$(P_{n,d},W_{q,\mathrm{ACL}}^{\rm in})$ is compact.  Fix any chordal distance $h$
on $P_{n,d}$.  The constants below may depend on $n,d,q$, and $h$.  The
quantitative bounds are:
\begin{enumerate}
\item if $d=1$, then there is $C_q<\infty$ such that
\begin{equation}\label{eq:acl-holder-linear}
  W_{q,\mathrm{ACL}}^{\rm in}(p,p')\le C_q h(p,p');
\end{equation}
\item if $d\ge2$, then there is $C_q<\infty$ such that
\begin{equation}\label{eq:acl-holder-endpoint}
  W_{q,\mathrm{ACL}}^{\rm in}(p,p')\le C_q h(p,p')^{1/d},
\end{equation}
and the exponent $1/d$ is optimal for each fixed $q\in[1,2]$: for every
$\beta>1/d$ no estimate
$W_{q,\mathrm{ACL}}^{\rm in}(p,p')\le C h(p,p')^\beta$ holds on a neighborhood
of the multiple hyperplane point $[z_0^d]$.
\end{enumerate}
\end{theorem}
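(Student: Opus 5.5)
We reduce the statement to the metric normalization machinery for $X=P_{n,d}$, and then prove sharpness by an explicit lower bound near $[z_0^d]$.

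\textbf{Step 1: identification with $\bar d$.} First we identify $W_{2,\mathrm{ACL}}^{\rm in}$ on $B=P_{n,d}\setminus\Delta_{n,d}$ with $d_B$, up to the Fubini--Study normalization constant. Both are length distances of the same Riemannian metric: by \cref{cor:smooth-cycle-specialization} and \eqref{eq:speed}, $g_B$ is the squared $W_2$-speed of $\mu_{b(t)}$. Since $P_{n,d}$ is smooth, it is normal, so $Y=X=P_{n,d}$. \Cref{thm:main} then gives $\widehat B\cong P_{n,d}$, compact, with $\bar d$ inducing the projective topology.

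\textbf{Step 2: upper bounds.} For the upper bounds we combine \cref{cor:smooth-normalization-holder} with the sharp potential regularity of \cref{lem:hyp-critical-potential}. We apply \cref{lem:localdistance} directly with $\alpha=2/d$ for $d\ge3$, and with $\alpha=1$ for $d=2$ (Lipschitz potential). This yields $\bar d\le Ch^{1/d}$ for $d\ge3$ and $\bar d\le Ch^{1/2}$ for $d=2$.

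For $d=1$ the discriminant is empty ($P_{n,1}$ parametrizes hyperplanes). Then $g_B$ is the smooth $U(n+1)$-invariant metric on $\PP^{n*}$, so $\bar d$ is Lipschitz to $h$.

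Next we pass from $q=2$ to $1\le q\le 2$. The inner $W_q$ length of a curve is at most its inner $W_2$ length, since $W_q\le W_2$ on probability measures and the metric speeds compare likewise. We must still check that ACL's inner $W_q$ distance on the full space is realized by curves through $B$, or by curves in $P_{n,d}$ whose measure curve is $W_q$-absolutely continuous. For this we take a $\bar d$-geodesic and push it forward by $\mathcal M$; this gives $W^{\rm in}_q\le W^{\rm in}_2\le C\bar d$.

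\textbf{Step 3: topology and compactness for $q<2$.} We need a lower comparison: $W^{\rm in}_q\ge W_q\ge W_1$. Since $p\mapsto\mu_p$ is a continuous injection from compact $P_{n,d}$ into $(\mathcal P(\PP^n),W_1)$, $W^{\rm in}_q$-convergence forces projective convergence. Together with the upper bound, the topologies agree, exactly as in \cref{cor:chow-inner-compactness}.

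\textbf{Step 4: sharpness at $[z_0^d]$ (the main obstacle).} Take $p_\eps=z_0^{d}+\eps z_1^d$, or better $p_\eps=z_0^d-\eps z_1^d$, whose zero set is the $d$ hyperplanes $z_0=\eps^{1/d}\zeta z_1$, $\zeta^d=1$. Then $h(p_\eps,[z_0^d])\asymp\eps$.

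The measure $\mu_{p_\eps}$ is the average of the volume measures of $d$ distinct hyperplanes at mutual distance $\asymp\eps^{1/d}$ near the line $\{z_0=z_1=0\}^\perp$ direction. Meanwhile $\mu_{[z_0^d]}$ is the single hyperplane $\{z_0=0\}$. We lower-bound $W^{\rm in}_q\ge W_1(\mu_{p_\eps},\mu_{z_0^d})$ via Kantorovich duality with the $1$-Lipschitz test function $\phi=\dist(\cdot,\{z_0=0\})$. Its integral against $\mu_{z_0^d}$ is $0$. Against $\mu_{p_\eps}$ it is $\gtrsim\eps^{1/d}$, because each rotated hyperplane $z_0=\eps^{1/d}\zeta z_1$ lies at distance $\asymp\eps^{1/d}|z_1|/|z|$ on a set of definite measure.

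Hence $W^{\rm in}_q\ge c\,h^{1/d}$, and any bound $Ch^\beta$ with $\beta>1/d$ fails as $\eps\to0$. The delicate points are the uniform-in-$\eps$ estimate of $\int\phi\,d\mu_{p_\eps}$ (a direct Fubini--Study computation) and confirming that the ACL normalization differs from ours only by a constant.
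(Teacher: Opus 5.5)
Your proposal is correct and follows the paper's proof essentially step for step. The upper bound $W_{q,\mathrm{ACL}}^{\rm in}\le W_{2,\mathrm{ACL}}^{\rm in}\le\sqrt{\pi}\,\bar d$ comes from pushing forward a $\bar d$-geodesic, as in \cref{cor:chow-inner-compactness}. The endpoint H\"older exponent comes from feeding \cref{lem:hyp-critical-potential} into \cref{lem:localdistance}. The topology follows from the lower bound $W_{q,\mathrm{ACL}}^{\rm in}\ge W_{q,\mathrm{ACL}}\ge W_{1,\mathrm{ACL}}$, and sharpness from the split family $z_0^d-\eps z_1^d$. Your Kantorovich--Rubinstein test function $\dist(\cdot,H_0)$ is just the $q=1$ case of the paper's coupling bound by $\int\dist(\cdot,H_0)^q\,d\mu_{p_t}$.

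The only blemish is Step~1. Equality of $W_{2,\mathrm{ACL}}^{\rm in}$ with $d_B$ on $B$ does not follow merely from both being ``length distances of the same Riemannian metric'', because the inner distance admits paths through $\Delta_{n,d}$. That equality is really \cref{cor:acl-identification} and needs the global action theorem. You never use it, though: only the inequality direction from the geodesic pushforward enters your argument.
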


\begin{proof}
Write $g_{\mathrm{FS}}^{(0)},\omega_0$ for the ambient Fubini--Study structure used
in this paper, so that $\int_{\PP^1}\omega_0=1$, and let $W_{2,0}$ be the
corresponding Wasserstein distance.  The Hopf-quotient Fubini--Study metric used
in \cite{ACL} has $\PP^1$-area $\pi$: in an affine coordinate $z$ on $\PP^1$,
\[
  ds_{\rm ACL}^2=\frac{|dz|^2}{(1+|z|^2)^2},
  \qquad
  \operatorname{Area}_{\rm ACL}(\PP^1)
  =\int_{\C}\frac{dx\,dy}{(1+|z|^2)^2}=\pi.
\]
The two Fubini--Study metrics are $U(n+1)$-invariant and compatible with the same
complex structure, so
\begin{equation}\label{eq:acl-fs-scale}
  g_{\mathrm{FS}}^{\rm ACL}=\pi g_{\mathrm{FS}}^{(0)},
  \qquad
  W_{2,\mathrm{ACL}}=\sqrt{\pi}\,W_{2,0}.
\end{equation}
The normalized hypersurface measures are the same in both conventions: on the
smooth locus the constant rescaling cancels between volume and total mass, and
continuity extends the equality across the discriminant.  Write
\[
  \mathcal M_{\rm hyp}:P_{n,d}\longrightarrow\mathcal P_2(\PP^n),
  \qquad
  \mathcal M_{\rm hyp}(p)=\mu_p.
\]
The admissible path classes agree, and the corresponding constrained action
distances satisfy
\begin{equation}\label{eq:acl-inner-scale-pre}
  W_{2,\mathrm{ACL}}^{\rm in}=\sqrt{\pi}\,W_{2,0}^{\rm in},
\end{equation}
where
\begin{equation}\label{eq:w20-inner-definition}
 \bigl(W_{2,0}^{\rm in}(p,q)\bigr)^2:=
 \inf_{\substack{\gamma\in C([0,1],P_{n,d}),\ \gamma(0)=p,\ \gamma(1)=q\\
                  \mathcal M_{\rm hyp}\circ\gamma\in AC^2([0,1],W_{2,0})}}
 \int_0^1
 \left|\frac d{dt}\mathcal M_{\rm hyp}(\gamma(t))\right|_{W_{2,0}}^2\,dt.
\end{equation}

Specializing \cref{cor:chow-inner-compactness} to $X=P_{n,d}$, whose
normalization is the identity, identifies \eqref{eq:w20-inner-definition} with
$D_X$.  It follows that $W_{2,0}^{\rm in}$ is a finite metric inducing the
projective topology, $(P_{n,d},W_{2,0}^{\rm in})$ is compact, and
\begin{equation}\label{eq:w20-bar-d}
  W_{2,0}^{\rm in}(p,p')\le \bar d(p,p').
\end{equation}

If $d=1$, the discriminant is empty and $B=P_{n,1}$.  The smooth positive
definite metric $g_B$ is uniformly comparable on the compact space $P_{n,1}$
with the standard Fubini--Study metric, so
\begin{equation}\label{eq:smooth-hypersurface-linear}
  \bar d(p,p')=d_B(p,p')\le C h(p,p').
\end{equation}
If $d\ge2$, \cref{lem:hyp-critical-potential} gives local
$C^{0,2/d}$ potentials on the smooth normal space $P_{n,d}$.  Applying
\cref{lem:localdistance} on the smooth locus
$B=P_{n,d}\setminus\Delta_{n,d}$, as in the proof of
\cref{cor:smooth-normalization-holder}, and then using a finite coordinate cover
gives
\[
  \bar d(a,b)=d_B(a,b)\le C h(a,b)^{1/d},
  \qquad a,b\in B.
\]
For arbitrary $p,p'\in P_{n,d}$, choose $a_j,b_j\in B$ with
$a_j\to p$ and $b_j\to p'$.  Passing to the completion by
\eqref{eq:limitmetric}, and using continuity of the chordal distance, yields
\begin{equation}\label{eq:hypersurface-bar-d-endpoint}
  \bar d(p,p')\le C h(p,p')^{1/d}
  \qquad (p,p'\in P_{n,d}).
\end{equation}
Combining these bounds with \eqref{eq:w20-bar-d} and
\eqref{eq:acl-inner-scale-pre} proves the stated estimates for $q=2$.

Fix now $1\le q\le2$.  If $W_{q,0}$ denotes ambient $q$-Wasserstein distance
for our normalization, then \eqref{eq:acl-fs-scale} gives
$W_{q,\mathrm{ACL}}=\sqrt\pi\,W_{q,0}$.  Moreover,
$W_{q,\mathrm{ACL}}\le W_{2,\mathrm{ACL}}$ on probability measures, every
$W_2$-absolutely continuous measure curve is $W_q$-absolutely continuous, and
$|\dot\mu|_q\le|\dot\mu|_2$ almost everywhere.  Since the parameter interval has
unit length, $\|v\|_{L^q}\le\|v\|_{L^2}$ for $q\le2$; taking infima gives
\begin{equation}\label{eq:wq-less-w2-inner}
  W_{q,\mathrm{ACL}}^{\rm in}(p,p')
  \le W_{2,\mathrm{ACL}}^{\rm in}(p,p').
\end{equation}
Therefore projective convergence implies convergence in the inner $W_q$ distance.
Conversely,
\[
  W_{q,\mathrm{ACL}}(\mu_p,\mu_{p'})
  \le W_{q,\mathrm{ACL}}^{\rm in}(p,p').
\]
Compactness of $\PP^n$ makes ambient $W_q$ convergence equivalent to weak
convergence, and the cycle measure map is
a topological embedding of compact $P_{n,d}$ into the probability measures.
Together, these implications show that $W_{q,\mathrm{ACL}}^{\rm in}$ induces the
projective topology and is compact for every $q\in[1,2]$.

For every $d\ge2$, sharpness follows from the family below, with
$q\in[1,2]$ fixed:
\[
  p_t=z_0^d-tz_1^d,
  \qquad p_0=z_0^d,
  \qquad a=t^{1/d},
  \qquad t>0.
\]
For any fixed chordal metric on the coefficient projective space,
$h([p_t],[p_0])\asymp t$ as $t\downarrow0$.  For $t>0$ the hypersurface cycle
splits as
\[
  Z(p_t)=\sum_{k=1}^d H_k,
  \qquad
  H_k=\{z_0=a\zeta_k z_1\},
  \qquad \zeta_k^d=1,
\]
whereas $p_0$ represents the hyperplane $H_0=\{z_0=0\}$ with multiplicity
$d$.  The associated measures are
\[
  \mu_{p_t}=\frac1d\sum_{k=1}^d\mu_{H_k},
  \qquad
  \mu_{p_0}=\mu_{H_0}.
\]
Every coupling with second marginal supported on $H_0$ pays at least the
$q$th power of the distance to $H_0$.  In the Fubini--Study metric normalized
by $\int_{\PP^1}\omega=1$,
\[
  \operatorname{dist}_0([z],H_0)
  =\frac1{\sqrt\pi}\arcsin\frac{|z_0|}{\|z\|}
  \ge\frac1{\sqrt\pi}\frac{|z_0|}{\|z\|}.
\]
Choose the orthonormal basis of the linear subspace underlying $H_k$ whose
first vector is
\[
  e'_1=\frac{a\zeta_k e_0+e_1}{\sqrt{1+a^2}}
\]
and whose remaining vectors are $e_2,\ldots,e_n$.  In the corresponding
homogeneous coordinates $[w_1:\cdots:w_n]$, put
$u=|w_1|^2/(|w_1|^2+\cdots+|w_n|^2)$.  Then
\[
  \frac{|z_0|}{\|z\|}=\frac{a}{\sqrt{1+a^2}}\sqrt u.
\]
Unitary invariance on $H_k\simeq\PP^{n-1}$ implies
$\int u\,d\mu_{H_k}=1/n$; since $q/2\le1$, one has $u^{q/2}\ge u$.  Therefore
\[
 \int_{H_k}\operatorname{dist}_0(x,H_0)^q\,d\mu_{H_k}(x)
 \ge \frac{a^q}{\pi^{q/2}n(1+a^2)^{q/2}}.
\]
Averaging over the $d$ components and using
$W_{q,\mathrm{ACL}}=\sqrt\pi\,W_{q,0}$ gives
\[
  W_{q,\mathrm{ACL}}^{\rm in}(p_t,p_0)
  \ge W_{q,\mathrm{ACL}}(\mu_{p_t},\mu_{p_0})
  \ge \frac{a}{n^{1/q}\sqrt{1+a^2}}
  \asymp t^{1/d}.
\]
Any upper estimate with exponent $\beta>1/d$ would imply
$t^{1/d}\lesssim t^\beta$ as $t\downarrow0$, which is impossible.  Hence
$1/d$ is the optimal uniform exponent for every fixed $q\in[1,2]$ and every
$d\ge2$.
\end{proof}

\subsection{Complete intersections}

\begin{corollary}\label{cor:complete-intersections}
Fix a multidegree $\mathbf d=(d_1,\ldots,d_k)$ with $1\le k<n$, let
$m=n-k$ and $D=\prod_i d_i$, and let $B=\cC_{\mathbf d}^{\rm sm}$ be the smooth
complete intersection cycle locus.  If
$X=\overline B^{\,\Chow}_{\rm red}$, then all conclusions of
\cref{thm:main} hold for $X$.
\end{corollary}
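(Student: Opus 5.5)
The plan is to reduce everything to \cref{thm:main} by verifying its hypotheses for $X=\overline B^{\,\Chow}_{\rm red}$. Concretely, we must show three things. First, $X$ is an irreducible reduced projective subvariety of $\Chow_{m,D}(\PP^n)$. Second, $B$ (or a dense Zariski-open subset of it contained in $X_{\rm reg}$) is an admissible locus in the sense of \cref{def:admissible}. Third, the metric from \cref{cor:smooth-cycle-specialization} on this locus is the usual Wasserstein metric of smooth complete intersections.

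For irreducibility, the smooth complete intersections of multidegree $\mathbf d$ are parametrized by a dense open subset $\mathcal U$ of the product $\prod_i\PP H^0(\mathcal O(d_i))$, or more intrinsically of a Grassmannian-type bundle of ideal generators. This parameter space is irreducible. The Chow map $\mathcal U\to\Chow_{m,D}(\PP^n)$, $(f_1,\dots,f_k)\mapsto[Z(f_1,\dots,f_k)]$, is a morphism: the universal complete intersection over $\mathcal U$ is flat with smooth fibers, so the Hilbert--Chow morphism applies. Hence $B$ is the image of an irreducible variety, it is constructible, and its Zariski closure with reduced structure is an irreducible reduced projective subvariety. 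Degree $D=\prod d_i$ follows from B\'ezout, and $m=n-k\ge1$ since $k<n$.

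For admissibility, I would show that $B$ contains a dense Zariski-open subset of $X$. By Chevalley, $B$ is constructible and dense in $X$, so it contains a dense open $B_0\subset X$. Intersect $B_0$ with $X_{\rm reg}$, and with the open set supplied by \cref{lem:generic-resolved} applied to $W=X$. Over that set, every cycle is a smooth complete intersection, hence smooth, reduced and embedded. Its universal family is the tautological smooth projective family of \cref{cor:smooth-cycle-specialization}, so locally labelled connected components give the resolved presentation with all $a_i=1$. (The fibers are in fact connected for $m\ge1$, by Lefschetz-type connectedness of positive-dimensional complete intersections.)

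The main subtlety is that the locus in the statement is all of $B$, not just an open subset of it. I would handle this in one of two ways. One option is to argue that all of $B\cap X_{\rm reg}$ is admissible: Rydh's criterion as used in \cref{cor:smooth-cycle-specialization} gives a smooth flat family over the reduced normal locus, hence local resolved presentations over every point where $X$ is smooth. The other option is to invoke the locus-independence in \cref{thm:main}, proved in \cref{cor:admissible-locus-independent}, so that the completion and all metric conclusions are unchanged when passing between admissible loci. I expect this identification, making sure the metric on $B$ agrees with the one on the chosen admissible subset and that $B$ lies in the regular locus where needed, to be the only nonformal step. Once it is in place, all conclusions of \cref{thm:main} hold for $X$: the homeomorphism $X^\nu\cong\widehat B$, compactness and geodesicity, and the H\"older bound.
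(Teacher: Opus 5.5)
Your irreducibility argument and your construction of an admissible open set $B_0\cap X_{\rm reg}\cap U\subset B$ are correct. However, that part uses nothing about complete intersections: \cref{lem:generic-resolved} already supplies an admissible locus for every irreducible reduced $X$. The real content of the corollary, and the way it is used later (e.g.\ in \cref{cor:quartic-metric}), is that $B=\cC_{\mathbf d}^{\rm sm}$ \emph{itself} is admissible, so that the completion of its own Wasserstein metric is $X^\nu$. That requires $B$ to be smooth and Zariski open in $X$, hence contained in $X_{\rm reg}$.

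You name this as the only nonformal step but do not prove it, and neither of your options closes it.
\begin{itemize}
\item Option (a) treats only $B\cap X_{\rm reg}$. Even that set is open in $X$ only if $B$ is locally closed in the Chow variety, and Chevalley does not give this: constructibility yields only a dense open subset of $B$. Option (a) also says nothing about points of $B$ that might be singular points of $X$.
\item Option (b) is circular. \cref{cor:admissible-locus-independent} compares loci already known to be admissible. Without smoothness of $B$, the metric $g_B$, and hence $d_B$ and $\widehat{(B,d_B)}$, is not even defined on all of $B$.
\end{itemize}

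The missing ingredient is the deformation theory of complete intersections combined with Rydh's Hilbert--Chow comparison, which is exactly what the paper's proof supplies.
\begin{itemize}
\item Since $m=n-k\ge1$, a smooth complete intersection $Z$ is arithmetically Cohen--Macaulay. The Koszul resolution then gives surjectivity of $H^0(\mathcal O_{\PP^n}(d_j))\to H^0(\mathcal O_Z(d_j))$.
\item Hence every section of $N_{Z/\PP^n}\simeq\bigoplus_j\mathcal O_Z(d_j)$ comes from deforming the equations. So the map from the open set of regular sequences to the Hilbert scheme has surjective differential, and the Hilbert scheme is smooth along the irreducible locus $H^{\rm sm}_{\mathbf d}$, with $T_{[Z]}=H^0(Z,N_{Z/\PP^n})$.
\item Rydh's theorem says that in characteristic zero the Hilbert--Chow morphism is an isomorphism over normal equidimensional subschemes. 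This gives $H^{\rm sm}_{\mathbf d}\xrightarrow{\sim}B$, so $B$ is a smooth irreducible locally closed subvariety of $\Chow_{m,D}(\PP^n)$.
\item Being locally closed, $B$ is open in its closure $X$, and being smooth, it lies in $X_{\rm reg}$.
\end{itemize}
Only after this does \cref{cor:smooth-cycle-specialization} give resolved presentations over every point of $B$, making $B$ itself admissible so that \cref{thm:main} applies to it.
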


\begin{proof}
Let $\mathcal U_{\mathbf d}^{\rm sm}$ be the Zariski-open locus in the product
of the spaces of homogeneous forms of degrees $d_1,\ldots,d_k$, consisting
of regular sequences whose common zero scheme is smooth of codimension $k$.
Its image in the Hilbert scheme is precisely the locally closed locus
$H_{\mathbf d}^{\rm sm}$ of smooth complete intersections of this multidegree.
For $Z\in H_{\mathbf d}^{\rm sm}$, the conormal sequence identifies
\[
 I_Z/I_Z^2\simeq\bigoplus_{j=1}^k\mathcal O_Z(-d_j),
 \qquad
 N_{Z/\PP^n}\simeq\bigoplus_{j=1}^k\mathcal O_Z(d_j).
\]
Standard deformation theory for complete intersections shows that the Hilbert
scheme is smooth along this locus and that
$T_ZH_{\mathbf d}^{\rm sm}=H^0(Z,N_{Z/\PP^n})$.  Equivalently, local embedded deformations come from deforming the regular sequence,
modulo changes of generators.
The parameter space $\mathcal U_{\mathbf d}^{\rm sm}$ is a nonempty open
subset of an irreducible affine space, hence is itself irreducible.  Its image
$H_{\mathbf d}^{\rm sm}$ in the Hilbert scheme is irreducible as well and is
therefore connected.  See \cite[Section~2.2]{DL} for this parameter space and
Hilbert scheme description.

Every member of $H_{\mathbf d}^{\rm sm}$ is smooth and hence normal.  In
characteristic zero the Hilbert--Chow morphism is an isomorphism over the locus
parametrizing normal equidimensional subschemes; see
\cite[Part~IV, Corollary~12.9 and Section~17]{Rydh08}.  Since the relative Chow functor used
there agrees with the classical Chow variety on reduced bases, the Hilbert--Chow
morphism restricts to an isomorphism
\[
 H_{\mathbf d}^{\rm sm}\xrightarrow{\sim} B
\]
onto its image.  The Hilbert--Chow isomorphism identifies $B$ as a smooth irreducible locally closed subset of the Chow variety.
Its reduced closure $X=\overline B^{\,\Chow}_{\rm red}$ is irreducible, and
$B$ is Zariski open in $X$.  The locus $B$ is admissible in the sense of \cref{def:admissible}, and the hypotheses of \cref{thm:main} are satisfied.
\end{proof}
\section{Wasserstein geometry on singular strata}\label{sec:singular-strata}

Metric normalization identifies the boundary points of the completion, but the
global action theorem also requires an infinitesimal description along singular
loci.  We therefore stratify $Y$ so that the resolved component families vary
smoothly.  On each stratum their normal motion again gives the exact Wasserstein
speed, providing the local dynamical input for \cref{sec:recovery}.

\subsection{Resolved families on algebraic strata}

\begin{lemma}\label{lem:strata}
There is a finite partition of $Y$ into smooth locally closed complex algebraic
strata $S$ such that, on every positive-dimensional stratum and locally in the
analytic topology:
\begin{enumerate}
\item $\chi\nu|_S$ is an immersion;
\item there are smooth proper holomorphic submersions
      $p_i:\widetilde Z_i\to S$ with connected smooth $m$-dimensional fibers,
      holomorphic maps $e_i:\widetilde Z_i\to\PP^n$, and fixed integers
      $a_i>0$, such that
\begin{equation}\label{eq:resolved-cycle}
 \nu(s)=\sum_i a_i(e_i)_*[\widetilde Z_{i,s}]
\end{equation}
as effective cycles for every $s\in S$.  The images are the distinct irreducible
reduced components, and each $e_{i,s}$ is birational onto its image and is an
isomorphism over a dense regular open subset.
\end{enumerate}
The dense isomorphism loci in (2) may be chosen as relative open subsets of the
resolved families, with fiberwise proper analytic complements.  The presentation
may first be constructed after a finite \'etale cover
of $S$; on sufficiently small analytic neighborhoods, inverse branches of the
cover recover the stated local data on $S$ itself.
\end{lemma}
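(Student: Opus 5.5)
The plan is a Noetherian induction on closed subsets of $Y$, applying \cref{lem:generic-resolved} repeatedly. Start with $W_0=Y$ and the finite map $\kappa_0=\chi\nu$. This map is finite onto its image because $\nu$ is finite and $\chi$ is a closed embedding. \Cref{lem:generic-resolved} then gives a nonempty smooth Zariski-open $U_0\subset W_0$ with two properties: $\kappa_0|_{U_0}$ is immersive, and the cycle family admits resolved presentations \eqref{eq:resolved-cycle} locally in the analytic topology. The proof of that lemma shows more: the presentation is built on a finite \'etale cover $W_0'\to W_0$ over $U_0$ and descends along inverse branches on small simply connected neighborhoods. It also yields the dense relative isomorphism loci with fiberwise proper analytic complements. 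This is exactly the last clause of the present statement, so I record it at each step.

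Next, let $Y_1=Y\setminus U_0$, a proper closed subset of lower dimension, and decompose it into irreducible components with their reduced structures. For each component $W$, the restriction of $\chi\nu$ is still finite onto its image, since a restriction of a finite map to a closed subvariety is finite. Applying \cref{lem:generic-resolved} to each $W$ gives open sets $U_W\subset W$. Two adjustments are needed. First, I remove from each $U_W$ its intersections with the other components of $Y_1$, so that the pieces are pairwise disjoint and locally closed in $Y$. Second, I remove the singular locus, so that each piece is smooth. Iterating this on the complement terminates by Noetherianity and dimension drop, producing a finite partition.

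On each positive-dimensional stratum, property (1) holds because the lemma gives immersivity. Note this is immersivity of $\chi\nu|_S$ as a map from $S$, not of the ambient map, which is all the statement requires. Property (2) holds by the lemma's resolved presentation. Zero-dimensional strata need nothing. The statement requires \eqref{eq:resolved-cycle} to hold for every $s\in S$, not just generically; this is guaranteed because the lemma establishes the equality on its whole open set, via separatedness of the Chow variety. The requirement that the images be the distinct irreducible components is arranged inside the lemma by removing the locus where generic components collide.

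The main point needing care is that \cref{lem:generic-resolved} is applied to a subvariety $W$, possibly non-normal, embedded in $Y$. I will therefore check that its hypotheses only use irreducibility, reducedness, and finiteness of $\kappa$ onto its image; these hold for each component of each $Y_k$. I will also check that the resulting stratum is smooth as a locally closed subvariety of $Y$, which follows because it is smooth as an open subset of the reduced $W$. A second subtlety is the dense regular isomorphism locus: after restricting to a stratum, the locus must be dense in every fiber, not merely generically. The lemma's final shrinking arranges this, and the property is inherited by further Zariski shrinking.
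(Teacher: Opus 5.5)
Your proposal is correct and takes essentially the same approach as the paper: Noetherian induction on the successive complements. At each step you apply \cref{lem:generic-resolved} to every irreducible component $W$ with $\kappa=\chi\nu|_W$, inherit immersivity, the resolved presentations and the finite \'etale descent from that lemma, and take leftover points as zero-dimensional strata. Removing the other components' intersections to get disjoint, smooth, locally closed pieces is a concrete version of the paper's ``refining overlaps by locally closed differences.''
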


\begin{proof}
Apply \cref{lem:generic-resolved} to the finite morphism
$\chi\nu:Y\to\chi(X)\subset\PP(V_{\rm Ch})$.  The lemma supplies the required
data on a dense smooth open subset of each
positive-dimensional irreducible component, where $\chi\nu$ is immersive.
Removing these pieces lowers the dimension of every remaining irreducible
component.  Repeating the construction on the finitely many components of the
remainder terminates by Noetherian induction.  Refining
overlaps by locally closed differences produces a finite partition by smooth locally
closed strata.  The local resolved presentations and finite \'etale descent are
those of
\cref{lem:generic-resolved}; the remaining zero-dimensional pieces are taken as
individual strata.
\end{proof}

\begin{lemma}\label{lem:stratum-lift}
Let $\rho:R\to Y$ be the projective resolution fixed in
\cref{sec:normalization}.  After refining the stratification in
\cref{lem:strata}, every positive-dimensional stratum $S$ admits, locally in the
analytic topology, a holomorphic map
\[
 h:S\longrightarrow R,\qquad \rho\circ h=\id_S.
\]
\end{lemma}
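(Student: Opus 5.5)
We refine the stratification of \cref{lem:strata} by Noetherian induction, at each step working generically on an irreducible stratum. Fix a positive-dimensional irreducible stratum $S$ with closure $\overline S\subset Y$.

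\emph{Step 1: a dominating subvariety of $R$.} The preimage $\rho^{-1}(\overline S)$ is a closed algebraic subset of $R$ and surjects onto $\overline S$, since $\rho$ is proper and surjective. Choose an irreducible component $W\subset\rho^{-1}(\overline S)$ that dominates $\overline S$. By generic smoothness in characteristic zero, there is a dense Zariski-open $S^\circ\subset S$ over which both $\rho|_{W_{\rm reg}}:W_{\rm reg}\to\overline S$ is smooth and $W_{\rm reg}\cap\rho^{-1}(S^\circ)$ surjects onto $S^\circ$. The second condition holds because the image of $W\setminus W_{\rm reg}$ together with the critical values forms a proper constructible subset, which we remove.

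\emph{Step 2: local sections.} A smooth surjective morphism of complex manifolds admits local holomorphic sections. Near any $s\in S^\circ$, pick $w\in W_{\rm reg}$ over $s$; the differential of $\rho|_{W_{\rm reg}}$ at $w$ is surjective onto $T_sS^\circ$. Choose a complex submanifold $\Sigma\subset W_{\rm reg}$ through $w$ that is transverse to the fiber, of dimension $\dim S$. By the inverse function theorem, $\rho|_\Sigma$ is a biholomorphism onto a neighbourhood of $s$, and its inverse composed with $\Sigma\hookrightarrow R$ gives the desired $h$ with $\rho\circ h=\id$. No slice algebraicity is needed, since only analytic-local maps are asserted.

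\emph{Step 3: refinement.} Replace $S$ by $S^\circ$ and the complement $S\setminus S^\circ$ by its finitely many lower-dimensional locally closed smooth pieces. On these new pieces we must re-apply \cref{lem:generic-resolved}, intersecting with the strata it produces so that properties (1)--(2) of \cref{lem:strata} persist; restriction to a locally closed smooth subset preserves immersivity and resolved presentations. Iterating terminates by dimension, and zero-dimensional strata are excluded from the claim anyway.

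The main obstacle is Step 1. We need to ensure that generic smoothness is applied to a reduced irreducible $W$, and that its smooth locus still surjects generically onto $S$. Taking the reduced structure and restricting to a dominating component handles this. A subtlety is that $S^\circ$ must remain smooth and locally closed in $Y$; this holds because $S^\circ$ is Zariski open in the smooth stratum $S$.
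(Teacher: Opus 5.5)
Your proof is correct, but it takes a different route from the paper's.

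\textbf{How the routes differ.} The paper works on the generic fiber of $R\times_Y\overline S\to\overline S$ over $K=\C(\overline S)$. It picks a closed point with finite residue extension $K'/K$ and normalizes $\overline S$ in $K'$. It spreads the $K'$-point to a section over the cover, shrinks until the cover is finite \'etale, and obtains holomorphic lifts from inverse branches over simply connected neighbourhoods. In effect it builds an algebraic \'etale multisection of relative dimension zero, reusing the descent mechanism of \cref{lem:generic-resolved}. You instead keep a dominating component $W$ of positive relative dimension. Generic smoothness gives a submersion point over every point of a Zariski-open $S^\circ$, and you cut down analytically with a transverse slice and the inverse function theorem. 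Your route is more elementary: it needs no field extension, no normalization in $K'$, and no \'etale descent. Its sections are only analytic, which is all the lemma asserts. The paper's sections are algebraic on a finite \'etale cover, which is more than is used later.

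\textbf{One justification in Step 1 is wrong.} You claim that the image of $W\setminus W_{\rm reg}$ is a proper constructible subset. This fails in general: when $\dim W>\dim\overline S$, the singular locus of $W$ can dominate $\overline S$. For example, in a family of cones the vertex locus is a section, so removing its image would remove everything. What you actually need is weaker, namely that $\rho(W_{\rm reg})$ contains a dense Zariski-open subset of $S$. This follows from Chevalley's theorem, since $W_{\rm reg}$ is dense in $W$ and $W$ dominates $\overline S$. Alternatively, the smooth morphism $W_{\rm reg}\cap\rho^{-1}(V)\to V$ supplied by generic smoothness is open, and its image is nonempty. With this correction, every point of $S^\circ$ has a preimage in $W_{\rm reg}$ at which the map to $S^\circ$ is submersive, and Step 2 goes through.

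\textbf{Step 3 is simpler than you make it.} There is no need to re-apply \cref{lem:generic-resolved} on the new pieces. As you note yourself, restricting to smooth locally closed subsets preserves properties (1)--(2) of \cref{lem:strata}, so Noetherian induction on the section construction alone suffices.
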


\begin{proof}
Fix an integral stratum closure $W$.  The generic fiber of
$R\times_YW\to W$ is a nonempty projective scheme over $K=\C(W)$.  Choose a
closed point of that generic fiber, with finite residue extension $K'/K$, and let
$g:W'\to W$ be the normalization in $K'$.  The induced $K'$-rational point spreads
to a section of $R\times_YW'\to W'$ over a dense open subset.  Using the
saturated descent from \cref{lem:strata}, shrink $W$ to a nonempty open $U$ whose
full inverse image $U'=g^{-1}(U)$ lies in the domain of the section.  As in \cref{lem:generic-resolved}, the map $g$ is finite.  Separability of
$K'/K$, together with the generic smoothness criterion
\cite[\href{https://stacks.math.columbia.edu/tag/07ND}{Tag~07ND}]{Stacks}, then
allows a further saturated shrinking for which $U'\to U$ is finite \'etale.  On a
sufficiently small simply connected analytic neighborhood in $U$, an inverse branch
of this cover pulls the section back to the required holomorphic lift.  Refining by
the lower-dimensional complements and applying Noetherian induction produces a finite
stratification compatible with \cref{lem:strata}.
\end{proof}

\subsection{Wasserstein geometry on a stratum}

Work with one local presentation from \cref{lem:strata}, and set
\[
 \mathsf V_D:=DV_m=\frac{D}{m!},\qquad
 \omega_i=e_i^*\omega,\qquad
 dV_{i,s}=\frac{\omega_{i,s}^m}{m!}.
\]
\begin{lemma}\label{lem:resolved-volume}
For every $i$ and $s\in S$,
\begin{equation}\label{eq:pushforward-volume}
 (e_{i,s})_*\!\left(\frac{(e_{i,s}^*\omega)^m}{m!}\right)
 =\frac{\omega^m|_{C_{i,s}^{\rm reg}}}{m!}
\end{equation}
as Radon measures, where $C_{i,s}$ is the reduced irreducible image cycle.
Equation \eqref{eq:resolved-measure} agrees with the normalized cycle measure
\eqref{eq:measure}.
\end{lemma}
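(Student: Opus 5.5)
The plan is to reduce \eqref{eq:pushforward-volume} to the change-of-variables formula on the locus where $e_{i,s}$ is biholomorphic, and then to show that both sides put no mass on the complement.

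By \cref{lem:strata}, there is a relative open set $\widetilde Z_{i,s}^\circ\subset\widetilde Z_{i,s}$ with proper analytic complement $A_{i,s}$. On this set $e_{i,s}$ is an isomorphism onto a dense open subset $C_{i,s}^\circ$ of $C_{i,s}^{\rm reg}$. Shrinking $C_{i,s}^\circ$ by the proper analytic set $e_{i,s}(A_{i,s})\cap C_{i,s}^{\rm reg}$, we may assume $e_{i,s}^{-1}(C_{i,s}^\circ)=\widetilde Z_{i,s}^\circ$ up to a null set. There, the pullback of the Kähler form $\omega|_{C_{i,s}^{\rm reg}}$ by the biholomorphism is $e_{i,s}^*\omega$. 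Hence the top powers divided by $m!$ correspond, and for every bounded Borel $\phi$ supported in $C_{i,s}^\circ$,
\[
\int_{\widetilde Z_{i,s}^\circ}\phi\circ e_{i,s}\,\frac{(e_{i,s}^*\omega)^m}{m!}
=\int_{C_{i,s}^\circ}\phi\,\frac{\omega^m}{m!}.
\]

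It then remains to check that the exceptional sets are null. On the source side, $A_{i,s}$ is a proper analytic subset of the connected $m$-dimensional manifold $\widetilde Z_{i,s}$. The smooth form $(e_{i,s}^*\omega)^m$ is absolutely continuous with respect to Lebesgue measure in charts, so it gives $A_{i,s}$ measure zero. On the target side, $C_{i,s}^{\rm reg}\setminus C_{i,s}^\circ$ is contained in a proper analytic subset of the connected manifold $C_{i,s}^{\rm reg}$, so it is $\omega^m$-null. The pushforward also assigns no mass to $e_{i,s}(\widetilde Z_{i,s})\setminus C_{i,s}^{\rm reg}=\Sing C_{i,s}$, because its preimage lies in $A_{i,s}$. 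Putting these together gives equality of the two Radon measures on $\PP^n$, where the right side is understood as the measure on $\PP^n$ carried by $C_{i,s}^{\rm reg}$. As a consistency check, both total masses equal $\deg C_{i,s}/m!$: the left one by the projection formula $\int e_{i,s}^*\omega^m=\deg C_{i,s}$, which holds because $e_{i,s}$ is birational.

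For the second assertion, I sum \eqref{eq:pushforward-volume} with the weights $a_i/\mathsf V_D$. The cycle identity \eqref{eq:resolved-cycle} says the images $C_{i,s}$ are exactly the distinct reduced irreducible components of $\nu(s)$, with multiplicities $a_i$. The weighted sum is therefore precisely $\frac1{DV_m}\sum_ia_i\,d\Vol_{C_{i,s}^{\rm reg}}$, which is \eqref{eq:measure}.

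The only step needing care is the null-set bookkeeping, namely that the isomorphism locus is dense in every fiber. That density is exactly what \cref{lem:strata} guarantees.
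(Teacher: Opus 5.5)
Your proposal is correct and follows the paper's own argument: change of variables on the locus where $e_{i,s}$ is biholomorphic, followed by the observation that the proper analytic exceptional sets on the source and on the target carry no mass. Your additional steps make explicit what the paper leaves implicit. These are the shrinking of $C_{i,s}^\circ$ by $e_{i,s}(A_{i,s})$, the check that the preimage of $\Sing C_{i,s}$ lies in $A_{i,s}$, and the weighted summation with the volume identification for the second assertion.
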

\begin{proof}
There are dense Zariski-open sets
$U_i\subset\widetilde Z_{i,s}$ and
$V_i\subset C_{i,s}^{\rm reg}$ on which $e_{i,s}:U_i\to V_i$ is a biholomorphism.
The complements are proper complex analytic subsets and are negligible for real
$2m$-dimensional measure.  On $U_i$ the two volume forms agree by change of
variables.  Neither the exceptional set on the source nor the singular complement
on the target contributes mass, so the Radon measures coincide.
\end{proof}

With this notation, \cref{lem:resolved-volume} identifies the cycle measure as
\begin{equation}\label{eq:resolved-measure}
 \mu_s=\mathsf V_D^{-1}\sum_i a_i(e_{i,s})_*(dV_{i,s}).
\end{equation}
On the regular portion where $e_{i,s}$ is an embedding, let $N_{i,v}$ be the normal
component of $de_i(H_v)$ for a lift $H_v$ of $v\in T_sS$.  Define
\begin{equation}\label{eq:stratum-current}
 \Omega_S=
 \mathsf V_D^{-1}\sum_i a_i(p_i)_*\frac{\omega_i^{m+1}}{(m+1)!},
 \qquad
 g_S(v,w)=\Omega_S(v,Jw).
\end{equation}

A curve in a stratum is called \emph{absolutely continuous} when it is absolutely
continuous for one, equivalently any, smooth auxiliary Riemannian metric.  Its
$g_S$-energy is $\int g_S(\dot s,\dot s)$.

\begin{proposition}\label{prop:resolved-speed}
The form $\Omega_S$ is smooth, closed and nonnegative.  For every real
$v\in T_sS$,
\begin{equation}\label{eq:stratum-energy}
 g_S(v,v)
 =
 \mathsf V_D^{-1}\sum_i a_i\int_{\widetilde Z_{i,s}}
 |N_{i,v}|^2\,dV_{i,s}.
\end{equation}
For every $\phi\in C^\infty(\PP^n)$,
\begin{equation}\label{eq:firstvariation}
 d\!\left(\int\phi\,d\mu_s\right)(v)
 =
 \mathsf V_D^{-1}\sum_i a_i
 \int_{\widetilde Z_{i,s}}d\phi(N_{i,v})\,dV_{i,s}.
\end{equation}
For every absolutely continuous curve $s:[0,T]\to S$, set $\mu_t=\mu_{s(t)}$.
Then $\mu_\cdot\in AC^1([0,T],W_2)$ and
\begin{equation}\label{eq:singular-speed}
 |\dot\mu_t|_{W_2}^2
 =g_S(\dot s(t),\dot s(t))
 \quad\text{for a.e. }t.
\end{equation}
If $s$ has finite $g_S$-energy, then $\mu_\cdot\in AC^2([0,T],W_2)$.
\end{proposition}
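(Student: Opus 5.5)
The plan is to transplant the proof of \cref{prop:curvature} to the stratum, replacing the open locus $B$ by $S$ and the presentation of \cref{def:admissible} by the local data of \cref{lem:strata}. The one real change is that $g_S$ is allowed to degenerate, so $g_S$ is no longer a metric. The proof therefore cannot lean on Riemannian length, and must work directly with continuity equations.

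\emph{Step 1: regularity of $\Omega_S$.} Each $p_i$ is a smooth proper submersion and $\omega_i^{m+1}$ is a smooth closed nonnegative $(m+1,m+1)$-form. Hence the fiber integral $(p_i)_*$ is smooth, closed (it commutes with $d$ because the fibers have no boundary), and nonnegative, since fiber integration preserves weak positivity of $(1,1)$-forms.

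\emph{Step 2: the two pointwise formulas.} The energy formula \eqref{eq:stratum-energy} is the same linear-algebra evaluation as in \cref{prop:curvature}. On the full-rank locus of $e_{i,s}$, contract $\omega_i^{m+1}/(m+1)!$ with $H_v$ and $JH_v$. Because complex tangent and normal spaces are $\omega$-orthogonal, only the normal component survives, which gives $|N_{i,v}|^2\,dV_{i,s}$. The rank-drop set is a proper analytic subset and carries no mass.

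For \eqref{eq:firstvariation}, write $\int\phi\,d\mu_s=\mathsf V_D^{-1}\sum_ia_i(p_i)_*(e_i^*\phi\,\omega_i^m/m!)$ using \eqref{eq:resolved-measure} and \cref{lem:resolved-volume}. Differentiate under the fiber integral via a Cartan/Lie derivative along $H_v$. The tangential part of $de_i(H_v)$ produces an exact fiberwise term that integrates to zero on the compact fiber. Only $d\phi(N_{i,v})$ remains, exactly as in \eqref{eq:firstvariation-B}. Independence of the lift $H_v$ follows because vertical changes of $H_v$ alter only tangential components.

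\emph{Step 3: the speed identity.} This is the main point, and I would carry it out in four sub-steps.
\begin{enumerate}
\item For an AC curve $s(t)$, set $N_t=N_{\dot s(t)}$, pushed forward to $\PP^n$ and averaged over the components. By \eqref{eq:firstvariation} it is a Borel velocity field solving the continuity equation for $\mu_t$.
\item Its $L^2(\mu_t)$ norm is $v(t)=g_S(\dot s,\dot s)^{1/2}\le C|\dot s|_{\rm aux}$. This uses the smoothness and local boundedness of $g_S$, so $v\in L^1$.
\item Next, $N_t$ is $L^2(\mu_t)$-approximable by gradients: remove small neighborhoods of the exceptional sets, rank-drop sets and component intersections, then extend in tubular neighborhoods of the compact regular images. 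This places $N_t$ in the Wasserstein tangent space, so it is the minimal velocity.
\item Finally, the reparametrization trick from the proof of \cref{prop:curvature} with $\beta(t)=\int_0^t(1+v)$ upgrades the $AC^2$ characterization of \cite[Theorem~2.4, Proposition~2.6]{ACL} and \cite[Chapter~8]{AGS}. This yields $\mu\in AC^1$ and $|\dot\mu_t|=v(t)$ a.e. If $v\in L^2$, the same identity gives $AC^2$.
\end{enumerate}

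The expected obstacle is the tangent-space step (sub-step 3) in the presence of multiplicities and overlapping images. When components share image points, the contributions must be combined into a single vector field in $L^2(\mu_t)$. I would argue that distinct reduced components meet in a $\mu$-null analytic set, so that the components can be treated separately off that set. When $g_S$ degenerates, $N_t=0$ and the identity still holds trivially.
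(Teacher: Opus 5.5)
Your proposal is correct and follows essentially the same route as the paper. The paper likewise uses fiber integration along the $p_i$ for $\Omega_S$, the $\omega$-orthogonality evaluation for \eqref{eq:stratum-energy}, and a Cartan computation for \eqref{eq:firstvariation}. It then approximates $N_v$ by ambient gradients via cutoffs and tubular neighborhoods to identify the minimal velocity, and applies the time change $\beta(t)=\int_0^t(1+v)$ to invoke the $AC^2$ characterization.

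One small point in the first-variation step: in the paper the exact term discarded on the compact fiber is $d\iota_{H_v}$ of the full integrand. The tangential cancellation is then the pointwise identity $d\phi(T_{i,v}+N_{i,v})-d\phi(T_{i,v})=d\phi(N_{i,v})$. This ordering avoids working with a vertical lift of $T_{i,v}$, which can blow up near rank-drop points, so it is the cleaner way to make your ``tangential part is exact'' step rigorous.
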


\begin{proof}
Smoothness and closedness of \eqref{eq:stratum-current} follow by fiber integration
along the smooth proper submersions $p_i$.  On the locus of full rank, split $de_i(H_v)$ into tangent and normal parts.  The complex tangent and normal subspaces are $\omega$-orthogonal, so evaluation of
$\omega_i^{m+1}/(m+1)!$ on $H_v,JH_v$ and vertical vectors equals
$|N_{i,v}|^2\omega_{i,s}^m/m!$, proving \eqref{eq:stratum-energy}.

For the first variation, differentiate along the compact resolved fiber.  Cartan's
formula and $d\omega_i=0$ reduce the derivative to
\begin{align*}
d\!\left((p_i)_*((\phi\circ e_i)\omega_i^m/m!)\right)(v)
&=
\int_{\widetilde Z_{i,s}}
\iota_{H_v}\left(e_i^*d\phi\wedge\omega_i^m/m!\right).
\end{align*}
On the immersive locus write
$de_i(H_v)=T_{i,v}+N_{i,v}$ with $T_{i,v}$ tangent to the component image.  Since
$\iota_{H_v}\omega_i$ restricts to $\iota_{T_{i,v}}\omega$ on the fiber, the
contraction identity reads
\[
 \left.\iota_{H_v}\!\left(e_i^*d\phi\wedge\frac{\omega_i^m}{m!}\right)
 \right|_{\widetilde Z_{i,s}}
 =\bigl[d\phi(T_{i,v}+N_{i,v})-d\phi(T_{i,v})\bigr]dV_{i,s}
 =d\phi(N_{i,v})\,dV_{i,s}.
\]
The tangential contribution cancels.  The locus where the rank drops is a proper analytic
subset of the smooth resolved fiber and has zero smooth volume; we set
$N_{i,v}=0$ there.  Removing this null set leaves the integral unchanged.  Summing over the components proves \eqref{eq:firstvariation}.

Distinct component images meet only in lower complex dimension.  Their normal
fields piece together almost everywhere into a unique $L^2(\mu_s)$
field.  Let $\Sigma_i$ be the union in the resolved fiber of the locus where the rank drops, the
exceptional locus, and the preimages of intersections with the other component
images.  Each $\Sigma_i$ is contained
in a proper analytic subset and is $dV_{i,s}$-null.  Since
$|N_{i,v}|^2\,dV_{i,s}$ is a finite measure by \eqref{eq:stratum-energy}, choose
shrinking neighborhoods $O_{i,k}\supset\Sigma_i$ such that
\[
  \int_{O_{i,k}}|N_{i,v}|^2\,dV_{i,s}\longrightarrow0,
\]
and smooth cutoffs $0\le\chi_{i,k}\le1$ which vanish near $\Sigma_i$ and equal
one off $O_{i,k}$.  Because $\chi_{i,k}$ vanishes on a neighborhood of
$\Sigma_i$, its support is a compact subset of the relative isomorphism locus
from \cref{lem:strata}.  Its image is therefore contained in a relatively
compact embedded open subset of $C_{i,s}^{\rm reg}$; for fixed $k$, these
embedded neighborhoods may be chosen disjoint for distinct components.  Transfer
$\chi_{i,k}N_{i,v}$ through this identification.  In a tubular neighborhood of
such an embedded open subset, with normal coordinate $\xi$, choose a
normal cutoff $\zeta_{i,k}$ and set
\[
 \varphi_{i,k}(z,\xi)
 =\chi_{i,k}(z)\,g_{\mathrm{FS}}(N_{i,v}(z),\xi)\,\zeta_{i,k}(z,\xi),
\]
where $z$ now denotes the corresponding point of the regular component.
Because $\varphi_{i,k}$ vanishes along the zero section, its tangential differential
there is zero, while its normal differential is $\chi_{i,k}N_{i,v}$.  By construction,
$\nabla\varphi_{i,k}|_{C_{i,s}^{\rm reg}}=\chi_{i,k}N_{i,v}$.  The tubular
neighborhoods may be chosen disjoint, so these ambient gradients can be summed.
By the preceding choice of $O_{i,k}$,
\[
 \sum_i a_i\int |(1-\chi_{i,k})N_{i,v}|^2\,dV_{i,s}\longrightarrow0.
\]
The approximation places $N_v$ in the $L^2(\mu_s)$-closure of ambient gradients,
which is the Wasserstein tangent space $T_{\mu_s}\mathcal P_2$.
For fixed $(s,v)$, \eqref{eq:firstvariation} determines the corresponding
functional associated with the continuity equation.  If $V$ is any other $L^2(\mu_s)$ field inducing
the same functional, then for every smooth $\phi$
\[
  \int_{\PP^n}\langle\nabla\phi,V-N_v\rangle\,d\mu_s=0.
\]
The last identity makes $V-N_v$ orthogonal to $T_{\mu_s}\mathcal P_2$, whereas
$N_v\in T_{\mu_s}\mathcal P_2$.  The orthogonal projection of any representative of the same functional onto the tangent space is $N_v$, which is the unique minimal-norm representative.

Let $s:[0,T]\to S$ be absolutely continuous.
Cover its compact image by finitely many charts carrying resolved presentations
and partition the parameter interval accordingly; the argument may be carried out
on each piece separately.
Fix a smooth auxiliary Riemannian metric $h$ there.
On the relevant compact subset, smoothness of $g_S$ implies
\[
  \sqrt{g_S(\dot s(t),\dot s(t))}\le C|\dot s(t)|_h
  \quad\text{for a.e. }t.
\]
In a local smooth trivialization of the resolved families,
$(s,v,z)\mapsto N_{i,v}(z)$ is smooth on the locus of full rank and linear in $v$;
extend it by zero on the analytic locus where the rank drops.  The construction in
\cref{lem:strata} supplies a relative open isomorphism locus on which
$(p_i,e_i)$ has a holomorphic inverse onto its image.  Transfer the field through
this inverse and set it equal to zero on the complementary $\mu_s$-null set.  The
resulting fields $N_t:=N_{\dot s(t)}$ form a Borel velocity field along $\mu_t$.  By \eqref{eq:stratum-energy},
\[
  \int_0^T\|N_t\|_{L^2(\mu_t)}\,dt
  =\int_0^T\sqrt{g_S(\dot s(t),\dot s(t))}\,dt<\infty.
\]
For every $\psi\in C_c^\infty((0,T)\times\PP^n)$, integrating the chain rule
identity from \eqref{eq:firstvariation} over time produces
\[
  \int_0^T\!\int_{\PP^n}
  \bigl(\partial_t\psi+\langle\nabla_x\psi,N_t\rangle\bigr)\,d\mu_t\,dt=0.
\]
The integrated identity is precisely the distributional continuity equation for
$(\mu_t,N_t)$.
For $0\le s\le t\le T$, the standard estimate for such solutions reads
\[
  W_2(\mu_s,\mu_t)
  \le \int_s^t\|N_r\|_{L^2(\mu_r)}\,dr.
\]
Put
\[
  v(t):=\|N_t\|_{L^2(\mu_t)},
  \qquad
  \beta(t):=\int_0^t(1+v(r))\,dr,
  \qquad
  \tau:=\beta^{-1}.
\]
The map $\beta$ is strictly increasing and absolutely continuous.  Define
$\widetilde\mu_s:=\mu_{\tau(s)}$ and rescale the velocity by
\[
  \widetilde N_s
  :=\frac{N_{\tau(s)}}{1+v(\tau(s))}.
\]
The time change formula shows that $(\widetilde\mu_s,\widetilde N_s)$ satisfies the distributional
continuity equation, while
\[
  \int_0^{\beta(T)}\|\widetilde N_s\|_{L^2(\widetilde\mu_s)}^2\,ds
  =\int_0^T\frac{v(t)^2}{1+v(t)}\,dt
  \le\int_0^T v(t)\,dt<\infty.
\]
Scalar rescaling preserves the pointwise minimality established above.  The
$AC^2$ minimal-velocity characterization in Wasserstein space therefore gives
\[
  |\dot{\widetilde\mu}_s|_{W_2}
  =\|\widetilde N_s\|_{L^2(\widetilde\mu_s)}
  \quad\text{for a.e. }s.
\]
Because $\mu_t=\widetilde\mu_{\beta(t)}$, the metric chain rule gives
\[
  |\dot\mu_t|_{W_2}=v(t)=\|N_t\|_{L^2(\mu_t)}
  \quad\text{for a.e. }t.
\]
The metric estimate places $\mu_\cdot$ in $AC^1([0,T],W_2)$, and
\eqref{eq:stratum-energy} becomes \eqref{eq:singular-speed}.  If $s$ has finite
$g_S$-energy, then $v\in L^2(0,T)$ and the same continuity-equation
characterization places $\mu_\cdot$ in $AC^2([0,T],W_2)$.  We use here the
corresponding formulation for compact Riemannian manifolds in
\cite[Theorem~2.4 and Proposition~2.6]{ACL}, together with the metric framework
of \cite[Chapter~8]{AGS}.
\end{proof}

\begin{proposition}\label{prop:restriction-curvature}
On every stratum,
\begin{equation}\label{eq:curvature-on-S}
 \Omega_S=
 \frac1{D(m+1)}(\chi\nu|_S)^*\cT.
\end{equation}
On every positive-dimensional stratum, $g_S$ is positive definite.
\end{proposition}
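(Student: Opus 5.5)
The plan is to repeat the argument of \cref{prop:curvature} with $B$ replaced by a stratum $S$, using the local resolved presentation from \cref{lem:strata}. I will check the two claims in turn: first the curvature identity \eqref{eq:curvature-on-S}, then positive definiteness.

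\emph{Curvature identity.} Fix a local presentation over an analytic neighborhood $U\subset S$. Let $\Psi_{\rm Ch}$ be a local Chow potential on a coefficient chart at $\chi\nu(U)$. Since $\chi\nu|_S$ is holomorphic and $\Psi_{\rm Ch}$ is continuous and plurisubharmonic by \cref{prop:logholder}, the pullback $(\chi\nu|_S)^*\cT=dd^c_S(\Psi_{\rm Ch}\circ\chi\nu)$ is well defined. Applying Poincar\'e--Lelong fiberwise over $U\times G$, exactly as in \eqref{eq:PLpush}, gives
\[
dd^c_S\Psi_{\rm Ch}=(p_S)_*\bigl([\mathcal D_S]\wedge p_G^*d\nu_G\bigr),
\]
where $\mathcal D_S\subset U\times G$ is the pulled-back universal Chow divisor.

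Next I form the incidence pullbacks $I_i=\widetilde Z_i\times_{\PP^n}I$ with maps $r_i:I_i\to U\times G$. Three facts from \eqref{eq:resolved-cycle} combine here: $e_{i,s}$ is birational onto its image, the images are distinct components, and the Chow form is multiplicative, $R_{\sum a_iC_i}=\prod R_{C_i}^{a_i}$. Together they identify $[\mathcal D_S]=\sum_ia_i(r_i)_*[I_i]$ as relative divisors. One then uses Fubini together with \cref{lem:crofton} to obtain
\[
(\chi\nu|_S)^*\cT=\sum_ia_i(p_i)_*e_i^*\omega^{m+1}.
\]
The constant $\mathsf V_D(m+1)!=D(m+1)$ then yields \eqref{eq:curvature-on-S}.

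The main subtlety is the divisor identity. On $B$ it held because the cycles vary in a flat family. On $S$ it still holds, because \eqref{eq:resolved-cycle} is an identity of effective cycles for every $s$. It therefore suffices to compare the divisors on an open dense set of $U\times G$ (generic incidence pairs, where $r_i$ has degree one) and conclude by equality of effective divisors.

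\emph{Positive definiteness.} Suppose $g_S(v,v)=0$ for a real $v\in T_sS$. By \eqref{eq:stratum-energy}, every $N_{i,v}$ vanishes $dV_{i,s}$-almost everywhere, and hence on the full-rank locus by continuity. Then the first-order displacement of each component Chow divisor at generic incidence pairs is zero. So the derivative of a local component Chow form $R_i$ along $v$ vanishes on $\{R_i=0\}$, i.e.\ it is divisible by $R_i$. Since both sections have the same degree on the compact Grassmannian $G$, the quotient is a constant. Consequently each component Chow point, and hence the weighted product $\chi\nu(s)$, has zero derivative along $v$. Because $\chi\nu|_S$ is an immersion by \cref{lem:strata}(1), this forces $v=0$.

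Applying the argument to $Jv$ as well handles complex versus real directions: $g_S$ is $J$-invariant because $\Omega_S$ is a $(1,1)$-form, so nothing further is needed there.
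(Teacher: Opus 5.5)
Your proposal is correct and follows essentially the same route as the paper. For the curvature formula it uses the Poincar\'e--Lelong pushforward, the incidence identity $[\mathcal D_S]=\sum_i a_i(r_i)_*[I_i]$, and \cref{lem:crofton}; for positive definiteness it uses vanishing of $\dot R_i$ on the component Chow divisor, divisibility by $R_i$, constancy of the quotient on $G$, and immersivity of $\chi\nu|_S$.

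The one step you pass over silently is that ``vanishes on $\{R_i=0\}$'' implies ``divisible by $R_i$'' only because the Chow divisor of a reduced irreducible component is itself reduced and irreducible. The paper states this explicitly, and it follows from the same generic-incidence degree-one argument you already use for the divisor identity.
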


\begin{proof}
Work on one analytic neighborhood carrying the resolved presentation.  Let
$p_S,p_G$ denote the projections from $S\times G$, and let
$\mathcal D_S\subset S\times G$ be the pullback of the universal Chow divisor
under $\chi\nu|_S$.  For each $i$, form the incidence pullback
\[
 I_i=\widetilde Z_i\times_{\PP^n} I
\]
using $e_i:\widetilde Z_i\to\PP^n$ and $a:I\to\PP^n$, and let
$r_i:I_i\to S\times G$ be the induced proper map.  By functoriality of relative cycles under proper pushforward, the relative
incidence divisor associated with the total cycle
$\sum_i a_i(e_i)_*[\widetilde Z_{i,s}]$ is the sum of the proper pushforwards of the component incidence cycles, with precisely these multiplicities.  The resulting identity, as cycles (or
equivalently as integration currents), is
\begin{equation}\label{eq:total-incidence-stratum}
 [\mathcal D_S]=\sum_i a_i(r_i)_*[I_i].
\end{equation}

Apply the Poincar\'e--Lelong pushforward from \cref{prop:curvature} to the local
Chow potential pulled back to $S$.  Combining
\eqref{eq:total-incidence-stratum} with Fubini and \cref{lem:crofton} identifies the pullback as
\[
 (\chi\nu|_S)^*\cT
 =(p_S)_*\bigl([\mathcal D_S]\wedge p_G^*d\nu_G\bigr)
 =\sum_i a_i(p_i)_*e_i^*\bigl(a_*b^*d\nu_G\bigr)
 =\sum_i a_i(p_i)_*e_i^*\omega^{m+1}.
\]
Since
$\mathsf V_D^{-1}(m+1)!^{-1}=1/[D(m+1)]$, this is
\eqref{eq:curvature-on-S}.

Suppose $g_S(v,v)=0$.  By \eqref{eq:stratum-energy}, every normal deformation
$N_{i,v}$ vanishes almost everywhere on the locus of full rank, and smoothness makes
this vanishing pointwise.  Fix a component and choose a local holomorphic
representative $R_i(s)$ of its Chow form, with derivative
$\dot R_i=dR_i(s)[v]$.  At a general incidence pair $(x,L)$, the point $x$ lies
in $C_{i,s}^{\rm reg}$, the intersection with $L$ is unique, and
$T_xC_{i,s}\cap T_xL=\{0\}$.  The corresponding incidence map is then locally
biholomorphic onto the smooth locus of the component Chow divisor.  In the local
graph description of the family, the vanishing of $N_{i,v}$ makes the first-order
graph deformation zero.  Transporting this infinitesimal deformation through the
local incidence isomorphism shows that the normal first-order displacement of the
component Chow divisor vanishes, so
$\dot R_i$ vanishes on a dense regular open subset of that divisor, and then on the
whole divisor by holomorphicity.

The generic incidence argument from \cref{prop:curvature} shows that the Chow
divisor of a reduced irreducible component is reduced and irreducible.  Hence
$\dot R_i$ is divisible by $R_i$; because both are sections of the same line
bundle $\mathcal O_G(D_i)$, the quotient is a global holomorphic function on the
compact connected Grassmannian $G$ and must be constant.  Writing
$\dot R_i=\lambda_iR_i$ means precisely
\[
  d\kappa_i(v)=0.
\]
Applying the argument to every component and using the local identity between the
weighted total cycle morphism $\sum_i a_i\kappa_i$ and $\chi\nu|_S$ shows that
$d(\chi\nu|_S)(v)=0$.  Immersivity of $\chi\nu|_S$ then forces $v=0$, so
$g_S$ is positive definite.
\end{proof}

\subsection{Distance gradients on strata}

Recall the quadratic form $Q(F)$ from \eqref{eq:Qdef}.  By
\eqref{eq:Qunit}, with our convention $\Omega(v,Jv)=g(v,v)$,
\begin{equation}\label{eq:sharp-factor}
 Q(F)\le\Omega
 \quad\Longleftrightarrow\quad
 |dF|_g\le1.
\end{equation}

\begin{lemma}\label{lem:trace}
Let $u$ be continuous psh on a coordinate domain $U$ and let
$F\in C^0(U)\cap W_{\rm loc}^{1,2}(U)$ be real-valued with
\[
 Q(F)\le dd^cu.
\]
For every holomorphic map $h:W\to U$ from a complex manifold $W$ one has
\[
 F\circ h\in W_{\rm loc}^{1,2}(W),
 \qquad
 Q(F\circ h)\le dd^c(u\circ h)
\]
as positive currents on $W$.  If $dd^c(u\circ h)$ is represented by a smooth
positive definite $(1,1)$-form $\Omega_h$, then $F\circ h$ is locally
$1$-Lipschitz for the Riemannian length metric of $\Omega_h$.
\end{lemma}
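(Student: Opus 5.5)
The plan is to regularize, pull back the smooth inequality along $h$, and pass to the limit, reusing the mollification scheme of \cref{lem:null}. Work locally: fix $w_0\in W$ and a relatively compact neighborhood $W_0\Subset W$ whose image lies in a ball $U_0\Subset U$. Mollify in the ambient coordinates, setting $F_\eps=F*\rho_\eps$ and $u_\eps=u*\rho_\eps$. As in \eqref{eq:jensen}, the matrix Jensen inequality gives
\[
Q(F_\eps)\le Q(F)*\rho_\eps\le dd^cu_\eps
\]
on $U_0$ for small $\eps$. Both sides are smooth, and pullback of smooth positive $(1,1)$-forms by a holomorphic map preserves the inequality. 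Hence
\[
Q(F_\eps\circ h)=h^*Q(F_\eps)\le dd^c(u_\eps\circ h)
\]
pointwise on $W_0$.

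Next I would obtain uniform Sobolev bounds. Test against $\zeta\,\omega_W^{k-1}$ with $\zeta\ge0$ a cutoff and $\omega_W$ a smooth Kähler form on $W_0$, and integrate by parts on the right. This gives
\[
\int\zeta\,|d(F_\eps\circ h)|^2\,dV_W\le C\int (u_\eps\circ h)\,dd^c\zeta\wedge\omega_W^{k-1},
\]
which is bounded uniformly because $u_\eps\to u$ uniformly on compacts. Since $F_\eps\circ h\to F\circ h$ uniformly, a weakly convergent subsequence in $W^{1,2}_{\rm loc}(W_0)$ has limit $F\circ h$, so $F\circ h\in W^{1,2}_{\rm loc}(W)$. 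For the inequality, I pass to the limit in $\int \Theta\wedge Q(F_\eps\circ h)\le\int\Theta\wedge dd^c(u_\eps\circ h)$ for smooth nonnegative test forms $\Theta$. The right side converges to $\int (u\circ h)\,dd^c\Theta$ by uniform convergence. On the left, $\Theta\wedge Q(\cdot)$ is a nonnegative quadratic form in the gradient, hence convex and weakly lower semicontinuous in $L^2$. This yields $Q(F\circ h)\le dd^c(u\circ h)$ as currents.

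The step I expect to be the main obstacle is the final Lipschitz claim. Assume $dd^c(u\circ h)=\Omega_h$ is smooth and positive definite. By \eqref{eq:Qunit} applied almost everywhere, $|d(F\circ h)|_{g_h}\le1$ a.e., where $g_h$ is the associated Riemannian metric. A continuous $W^{1,2}$ function with bounded weak gradient is in $W^{1,\infty}_{\rm loc}$, so it is locally Lipschitz. To get the sharp constant $1$ for the $g_h$-length metric, I would mollify $F\circ h$ in local coordinates on $W$ and use the Jensen step again together with the smoothness of $g_h$. Since $g_h$ is continuous, the regularized gradients satisfy $|d(F\circ h)_\delta|_{g_h}\le1+o(1)$ locally uniformly. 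Integrating along any piecewise smooth curve then gives
\[
|(F\circ h)_\delta(a)-(F\circ h)_\delta(b)|\le(1+o(1))\,L_{g_h}(\gamma),
\]
and letting $\delta\to0$ yields the $1$-Lipschitz bound on small geodesically convex balls.
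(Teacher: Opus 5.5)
Your proposal is correct and follows essentially the same route as the paper. Both arguments use ambient mollification with the matrix Jensen inequality, holomorphic pullback of the smooth inequality, a uniform local $W^{1,2}$ bound by integrating by parts against a cutoff times $\omega^{k-1}$, and weak lower semicontinuity of the tested quadratic form to pass to the limit. The only difference is that you prove the sharp $1$-Lipschitz step by a second mollification on $W$ using continuity of $g_h$, whereas the paper simply invokes the Sobolev-to-Lipschitz property for a smooth Riemannian metric.
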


\begin{proof}
Regularize $F$ and $u$ in the ambient coordinates, on slightly smaller domains:
$F_\eps=F*\rho_\eps$ and $u_\eps=u*\rho_\eps$.  Matrix Jensen's inequality implies
\[
 Q(F_\eps)\le Q(F)*\rho_\eps\le dd^cu_\eps.
\]
Holomorphic pullback preserves this inequality, so
\[
 Q(F_\eps\circ h)\le dd^c(u_\eps\circ h).
\]
Let
$W'\Subset W_0\Subset W$ be coordinate balls, let $\omega_0$ be the Euclidean
K\"ahler form on $W_0$, and choose $\chi\in C_c^\infty(W_0)$ with
$0\le\chi\le1$ and $\chi\equiv1$ on $W'$.  For all sufficiently small $\eps$,
\begin{align*}
 \int_{W'} |d(F_\eps\circ h)|_{\omega_0}^2\,dV_{\omega_0}
 &\le C\int_{W_0}\chi\,dd^c(u_\eps\circ h)\wedge\omega_0^{k-1}\\
 &= C\int_{W_0}(u_\eps\circ h)\,dd^c\chi\wedge\omega_0^{k-1}
 \le C_{W',W_0},
\end{align*}
where $k=\dim_\C W$.  The last bound is uniform because
$u_\eps\circ h\to u\circ h$ uniformly on $\overline{W_0}$.  Since
$F_\eps\circ h\to F\circ h$ uniformly as well, this ensures a uniform local
$W^{1,2}$ bound.  Passing to a weakly convergent subsequence, fix a smooth strongly
positive $(k-1,k-1)$-test form $\eta$.  The functional obtained by pairing
$Q(\cdot)$ with $\eta$ is a nonnegative convex quadratic $L^2$-functional of the
weak gradient, so it is weakly lower semicontinuous.  Since
$dd^c(u_\eps\circ h)\to dd^c(u\circ h)$ distributionally, the tested inequalities
pass to the limit for every such $\eta$, giving
\[
 Q(F\circ h)\le dd^c(u\circ h).
\]
The limit is independent of the subsequence because of uniform convergence.
If $dd^c(u\circ h)$ is represented by a smooth positive definite form
$\Omega_h$, \eqref{eq:sharp-factor} and the Sobolev-to-Lipschitz property for a
smooth Riemannian metric prove the final assertion.
\end{proof}

For $p\in B$, consider the completion distance function pulled back to $R$,
$F_p(r)=\bar d(p,\rho(r))$.  The current $T_R$ is positive and closed on $R$,
and it is a smooth K\"ahler form on $B$.  By \cref{lem:weakgradient},
\begin{equation}\label{eq:sharp-distance-current}
 Q(F_p)\le T_R.
\end{equation}

\begin{proposition}\label{prop:stratum-distance}
For every absolutely continuous curve $s:[0,T]\to S$, set $\mu_t=\mu_{s(t)}$.  Then
\begin{equation}\label{eq:all-stratum-speeds}
 |\dot s(t)|_{\bar d}
 =|\dot\mu_t|_{W_2}
 =\sqrt{g_S(\dot s(t),\dot s(t))}
 \quad\text{for a.e. }t.
\end{equation}
\end{proposition}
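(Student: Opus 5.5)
The second equality in \eqref{eq:all-stratum-speeds} is \eqref{eq:singular-speed} from \cref{prop:resolved-speed}, so only $|\dot s(t)|_{\bar d}=|\dot\mu_t|_{W_2}$ remains. I will prove it through two inequalities.

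\emph{Lower bound.} Passing \eqref{eq:lower} to the completion gives $W_2(\mu_{\nu(y)},\mu_{\nu(z)})\le\bar d(y,z)$ on all of $Y$. Hence, whenever $s$ is $\bar d$-absolutely continuous, we get $|\dot\mu_t|_{W_2}\le|\dot s(t)|_{\bar d}$ almost everywhere.

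\emph{Upper bound.} I will show $\bar d(s(t_1),s(t_2))\le\int_{t_1}^{t_2}\sqrt{g_S(\dot s,\dot s)}\,dt$. This yields both $\bar d$-absolute continuity, since $\sqrt{g_S}\le C|\dot s|_h$ on compact pieces, and $|\dot s|_{\bar d}\le\sqrt{g_S(\dot s,\dot s)}$ almost everywhere. Since $\bar d$ is a length metric, it suffices to work locally. Cover the image of $s$ by finitely many neighborhoods carrying a holomorphic lift $h:S\to R$ with $\rho\circ h=\id_S$, as in \cref{lem:stratum-lift}. For $p\in B$, apply \cref{lem:trace} to $F_p=\bar d(p,\rho(\cdot))$ on coordinate domains of $R$, with $u$ the local potential of $T_R$ (continuous psh, H\"older), using \eqref{eq:sharp-distance-current}. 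Pulling back by $h$ gives
\[
Q(F_p\circ h)\le dd^c(u\circ h)=\tfrac1{D(m+1)}(\chi\nu|_S)^*\cT=\Omega_S
\]
by \cref{prop:restriction-curvature}. The pullback of the potential is the potential of the pulled-back current, because $f\circ h=\chi\nu|_S$. Since $g_S$ is smooth and positive definite on positive-dimensional strata, the last clause of \cref{lem:trace} makes $y\mapsto\bar d(p,y)$ locally $1$-Lipschitz for the $g_S$ length metric on $S$. Along an absolutely continuous curve this gives
\[
|\bar d(p,s(t_2))-\bar d(p,s(t_1))|\le\int_{t_1}^{t_2}\sqrt{g_S(\dot s,\dot s)}\,dt.
\]
Now take $p=b_j\in B$ with $b_j\to s(t_1)$ in $\bar d$; this is possible because $B$ is dense in the completion. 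The left side tends to $\bar d(s(t_1),s(t_2))$, and the right side does not depend on $p$. Summing over a partition subordinate to the finite cover gives the global bound. Zero-dimensional strata are trivial, since there $s$ is constant.

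\emph{Combining.} Chaining the two bounds with \eqref{eq:singular-speed} gives
\[
|\dot\mu_t|_{W_2}\le|\dot s(t)|_{\bar d}\le\sqrt{g_S(\dot s,\dot s)}=|\dot\mu_t|_{W_2}
\]
almost everywhere, which is \eqref{eq:all-stratum-speeds}.

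\emph{Main obstacle.} The delicate step is justifying the pullback $Q(F_p\circ h)\le\Omega_S$. The lift $h(S)$ may lie entirely inside the SNC divisor $A$, where $F_p$ is only $W^{1,2}$ and $T_R$ is singular, so naive restriction is meaningless. This is why I route the argument through \cref{lem:trace}, which regularizes in ambient coordinates before pulling back. It requires $F_p$ continuous and $W^{1,2}_{\rm loc}$ on $R$, which \cref{lem:weakgradient} supplies, and $u$ continuous, which the H\"older potentials supply. I also need to check that the potential of $T_R$ composed with $h$ is exactly a potential for $\Omega_S$, which follows from the identity $f\circ h=\chi\nu|_S$ and \eqref{eq:curvature-on-S}.
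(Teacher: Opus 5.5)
Your proposal is correct and follows essentially the same route as the paper: the local lift $h$ from \cref{lem:stratum-lift}, \cref{lem:trace} applied to $F_p$ with $h^*T_R=\Omega_S$ to make $\bar d(p,\cdot)$ locally $1$-Lipschitz for $g_S$, and then $W_2\le\bar d$ together with \eqref{eq:singular-speed} to close the chain of inequalities. The only cosmetic difference is that you recover $\bar d(s(t_1),s(t_2))$ by letting $p\to s(t_1)$ within $B$, whereas the paper takes the supremum of $|\bar d(p,s_0)-\bar d(p,s_1)|$ over a countable dense $D_0\subset B$; the two amount to the same thing.
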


\begin{proof}
Use the local holomorphic lift $h:S\to R$ from \cref{lem:stratum-lift}.  By
\cref{prop:restriction-curvature}, $h^*T_R=\Omega_S$.  Apply
\cref{lem:trace} to $F_p$ and a local potential of $T_R$.  The function
$s\mapsto\bar d(p,s)$ is $1$-Lipschitz for the $g_S$ length metric.  Let
$D_0\subset B$ be countable and dense in $(Y,\bar d)$.  Then, for nearby points
$s_0,s_1$ in the same stratum chart,
\[
 \bar d(s_0,s_1)
 =\sup_{p\in D_0}\bigl|\bar d(p,s_0)-\bar d(p,s_1)\bigr|
 \le d_{g_S}(s_0,s_1).
\]
We therefore have $|\dot s(t)|_{\bar d}\le |\dot s(t)|_{g_S}$ almost everywhere.  The global inequality
$W_2\le\bar d$ implies
$|\dot\mu_t|_{W_2}\le |\dot s(t)|_{\bar d}$, while
\cref{prop:resolved-speed} identifies the ambient speed with $|\dot s(t)|_{g_S}$.
\end{proof}

\section{Global action formula}\label{sec:recovery}

We now pass from the stratumwise speed identity to arbitrary continuous paths in
the normalization.  Set
\[
  \mathcal M:Y\longrightarrow\mathcal P_2(\PP^n),
  \qquad
  \mathcal M(y)=\mu_{\nu(y)}.
\]
The map $\mathcal M$ is $1$-Lipschitz because $W_2\le\bar d$.

\begin{theorem}[Ambient action identity]\label{thm:full-action}
Let $\gamma:[0,T]\to Y$ be continuous and set $\mu_t=\mathcal M(\gamma(t))$.  For every
$1\le p\le\infty$,
\begin{equation}\label{eq:ACequiv}
 \gamma\in AC^p([0,T],\bar d)
 \quad\Longleftrightarrow\quad
 \mu_\cdot\in AC^p([0,T],W_2).
\end{equation}
In either case,
\begin{equation}\label{eq:full-action-speed}
 |\dot\gamma(t)|_{\bar d}
 =
 |\dot\mu_t|_{W_2}
 \quad\text{for a.e. }t.
\end{equation}
\end{theorem}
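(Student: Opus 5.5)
The direction ($\Rightarrow$) is immediate. Since $\mathcal M$ is $1$-Lipschitz from $(Y,\bar d)$ to $(\mathcal P_2,W_2)$, every $\gamma\in AC^p([0,T],\bar d)$ gives $\mu_\cdot\in AC^p([0,T],W_2)$ with $|\dot\mu_t|_{W_2}\le|\dot\gamma(t)|_{\bar d}$ a.e. The whole content is therefore the converse together with the reverse inequality. I plan to prove the integrated estimate
\[
 \bar d(\gamma(a),\gamma(b))\le\int_a^b|\dot\mu_t|_{W_2}\,dt\qquad(0\le a\le b\le T)
\]
for every continuous $\gamma$ with $\mu_\cdot\in AC^1([0,T],W_2)$. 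This estimate puts $\gamma$ in $AC^1$ with $|\dot\gamma|_{\bar d}\le|\dot\mu|_{W_2}$, and the $AC^p$ equivalence then follows from the equality of the speeds.

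First, I localize to strata. Take the finite stratification of \cref{lem:strata}, refined as in \cref{lem:stratum-lift}. Set $E_S=\gamma^{-1}(S)$; these are Borel sets partitioning $[0,T]$. On a positive-dimensional stratum I introduce \emph{moment coordinates}. Because $\chi\nu|_S$ is an immersion and $g_S$ is positive definite (\cref{prop:restriction-curvature}), the first-variation formula \eqref{eq:firstvariation} shows that near each point of $S$ finitely many moments $s\mapsto\int\phi_k\,d\mu_s$, with $\phi_k\in C^\infty(\PP^n)$, give a smooth embedding of a chart of $S$ into $\R^K$. Along $\mu_\cdot$ each moment $t\mapsto\int\phi_k\,d\mu_t$ is absolutely continuous, with derivative bounded by $\|\nabla\phi_k\|_\infty|\dot\mu_t|_{W_2}$. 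At a Lebesgue density point $t$ of $E_S$ at which all moments are differentiable, the chart coordinates of $\gamma$ restricted to $E_S$ are therefore differentiable along $E_S$. This yields a tangent vector $v(t)\in T_{\gamma(t)}S$, and the continuity equation identifies $|\dot\mu_t|_{W_2}^2=g_S(v,v)$, as in \cref{prop:resolved-speed}.

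Second, I pass from the stratum to $\bar d$. \Cref{prop:stratum-distance} shows that $\bar d\le d_{g_S}$ locally on a stratum chart. Combining this with the previous step gives
\[
 \limsup_{h\to0,\ t+h\in E_S}\frac{\bar d(\gamma(t),\gamma(t+h))}{|h|}\le|\dot\mu_t|_{W_2}
\]
at a.e. $t\in E_S$. Zero-dimensional strata are finitely many points: there $\gamma$ is constant on $E_S$, and the one-sided estimate is trivial at density points of $E_S$.

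The main obstacle is the stitching across strata: times $t+h$ outside $E_S$ may accumulate at $t$ on highly fragmented sets, so the one-sided estimate along $E_S$ does not by itself control the full metric derivative. I would handle this by a countable covering argument. For fixed $a<b$ and $\varepsilon>0$, the Vitali covering lemma applied to the density points of each of the finitely many sets $E_S$ gives countably many disjoint intervals $[t_j,t_j+h_j]$, each with endpoints in a common $E_S$, on which $\bar d(\gamma(t_j),\gamma(t_j+h_j))\le\int_{t_j}^{t_j+h_j}(|\dot\mu|_{W_2}+\varepsilon)$, and which cover $[a,b]$ up to a set of measure at most $\varepsilon$. The uncovered gaps must be estimated using only the H\"older modulus \eqref{eq:holdermetric} and continuity. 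To make this work I would first prove a Lusin-(N)-type property: $\gamma$ maps null time sets to $\mathcal H^1_{\bar d}$-null sets. I would try to derive it by induction on stratum dimension, noting that the same estimate holds at density points of the lower strata, so the exceptional set is handled stratum by stratum. With that property, the triangle inequality over the chained intervals, letting $\varepsilon\to0$, gives the integrated estimate and hence \eqref{eq:full-action-speed}.
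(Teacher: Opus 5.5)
Your forward direction and your analysis on a single stratum are essentially fine. The a.e. identification $|\dot\mu_t|_{W_2}=|v(t)|_{g_S}$ at density points of $E_S$ can be justified as you suggest: test the continuity equation against a countable dense family of $\phi$'s and use minimality of the velocity field. The paper obtains it differently, via the comparison curve $\sigma_j=\Theta_j^{-1}\circ m_j$ and \cref{lem:coincidence}.

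The gap is the stitching step, which is the real content of the theorem, and the mechanism you propose for it does not close.
\begin{itemize}
\item After a Vitali selection of finitely many intervals, the leftover gaps have small but \emph{positive} measure. Nothing you have controls $\sum\bar d$ across them. The bound \eqref{eq:holdermetric} compares $\bar d$ with $h_Y$, not with time. Continuity gives no summable bound either, which is exactly compatible with a curve moving along a Cantor-type set of times.
\item A Lusin (N) property does not help with these gaps, since it only concerns null sets.
\item If you instead take countably many intervals covering almost everything, there is no finite chain for the triangle inequality. Bounds at the endpoints of the intervals also say nothing about $\mathcal H^1$ of the image of each interval.
\item The (N) property itself is left unproved. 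Estimates at density points of lower strata cannot produce it, because (N) concerns precisely the exceptional null set where those estimates are unavailable.
\end{itemize}

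Two ideas are missing.

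First, (N) follows from a uniform, everywhere comparison rather than from an induction on strata. On compact cores $K_j$ of moment charts, \cref{lem:moments} gives $\bar d(s,t)\le C|s-t|\le C'W_2(\mu_s,\mu_t)$ for all $s,t\in K_j$. Now reparametrize by $W_2$-arclength $\ell$. This is possible because $\mathcal M$ has finite fibers and $\gamma$ is continuous, so $\gamma$ is constant on level sets of $\ell$. The resulting curve $\widetilde\gamma$ is then Lipschitz on each piece of a countable Borel partition of the time interval into preimages of chart cores and of zero-dimensional strata.

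Second, the pieces must be glued on the image side, not by chaining in time. Take $p$ in a countable dense subset of $B$. The scalar function $f=\bar d(p,\widetilde\gamma(\cdot))$ is continuous. On each piece $E_j$ it coincides with a McShane extension $f_j$ satisfying $|f_j'|\le1$ a.e. on $E_j$. Since $f([s,t])$ is an interval and the $E_j$ partition $[s,t]$, the area formula gives $|f(t)-f(s)|\le\mathcal L^1(f([s,t]))\le\sum_j\int_{E_j\cap[s,t]}|f_j'|\le t-s$; this is \cref{lem:stitch}. Taking the supremum over $p$ then gives the $1$-Lipschitz bound for $\widetilde\gamma$.

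Equivalently, your own ingredients would suffice if you replaced the chaining by three things: the covering estimate $\mathcal H^1(\gamma(A))\le M\,\mathcal L^1(A)$ for sets $A$ on which the restricted Lipschitz bound $M$ holds at \emph{every} point (applied on level sets of $|\dot\mu|$), the (N) property from the first idea, and $\bar d(\gamma(a),\gamma(b))\le\mathcal H^1(\gamma([a,b]))$.
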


\begin{remark}\label{rem:continuity-essential}
Continuity is what keeps track of the normalization branch.  A jump between two
distinct points in one fiber of $\mathcal M$ has constant measure image, but it
is not $\bar d$-absolutely continuous.
\end{remark}

\Cref{prop:stratum-distance} already gives the speed identity along every
individual stratum.  The remaining difficulty is that a continuous path may meet
different strata on highly fragmented sets of times.  The moment-coordinate and
stitching lemmas below control these transitions; they are assembled in
\cref{sec:action-proof} to prove \cref{thm:full-action}.

\subsection{Moment coordinates and stitching}

\begin{lemma}\label{lem:moments}
For every point $s_0$ of a positive-dimensional stratum there are smooth real
functions $\phi_1,\ldots,\phi_d$ on $\PP^n$, $d=\dim_\R S$, such that
\[
 \Theta(s)=
 \left(\int\phi_1\,d\mu_s,\ldots,\int\phi_d\,d\mu_s\right)
\]
is a smooth real coordinate map near $s_0$.  On a smaller neighborhood,
\begin{equation}\label{eq:moment-comparison}
 |s-t|\le C W_2(\mu_s,\mu_t),
 \qquad
 \bar d(s,t)\le C|s-t|.
\end{equation}
\end{lemma}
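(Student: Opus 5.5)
Fix $s_0$ in a positive-dimensional stratum $S$, $d=\dim_\R S$, and work in a chart carrying the resolved presentation of \cref{lem:strata}. The plan is to choose the test functions so that $d\Theta$ has full rank at $s_0$, and then to obtain \eqref{eq:moment-comparison} from the inverse function theorem together with the stratumwise metric identity.

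\textbf{Choice of moments.} By \eqref{eq:firstvariation}, for $\phi\in C^\infty(\PP^n)$ and $v\in T_{s_0}S$,
\[
 d\Bigl(\int\phi\,d\mu_s\Bigr)(v)=\int\langle\nabla\phi,N_v\rangle\,d\mu_{s_0},
\]
so the linear map $\ell:v\mapsto\bigl(\phi\mapsto\int\langle\nabla\phi,N_v\rangle d\mu_{s_0}\bigr)$ is real linear on $T_{s_0}S$. It is injective: if $\ell(v)=0$, then $N_v$ is $L^2(\mu_{s_0})$-orthogonal to all ambient gradients. The proof of \cref{prop:resolved-speed} places $N_v$ in the closure of those gradients, so $N_v=0$ and hence $g_S(v,v)=0$. \Cref{prop:restriction-curvature} then gives $v=0$. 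Thus the finite-dimensional space $\ell(T_{s_0}S)$ has dimension $d$, and finitely many evaluations separate it. Concretely, choose $\phi_1,\dots,\phi_d$ by induction, each time picking $\phi_k$ whose functional is not in the span of the previous ones restricted to $\ell(T_{s_0}S)$. The differential $d\Theta(s_0)$ is then invertible. By smoothness of $\Theta$ (fiber integration of smooth data over the proper submersions $p_i$) and the inverse function theorem, $\Theta$ is a smooth coordinate map near $s_0$.

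\textbf{Comparisons.} For the first inequality, observe that each $\phi_k$ is smooth on compact $\PP^n$, hence Lipschitz. By Kantorovich--Rubinstein duality,
\[
 \Bigl|\int\phi_k\,d\mu_s-\int\phi_k\,d\mu_t\Bigr|\le \operatorname{Lip}(\phi_k)\,W_1(\mu_s,\mu_t)\le \operatorname{Lip}(\phi_k)\,W_2(\mu_s,\mu_t).
\]
Hence $|\Theta(s)-\Theta(t)|\le CW_2$. Since $\Theta^{-1}$ is Lipschitz on a small convex coordinate neighborhood, this yields $|s-t|\le CW_2(\mu_s,\mu_t)$. For the second inequality, use \cref{prop:stratum-distance}, which gives $\bar d(s_0',s_1')\le d_{g_S}(s_0',s_1')$ for nearby points in the same stratum chart. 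Then bound $d_{g_S}$ by the length of the straight coordinate segment, which is at most $C|s-t|$ because $g_S$ is smooth and hence bounded on a compact neighborhood.

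\textbf{Main obstacle.} The delicate step is the injectivity of $\ell$, i.e.\ that the moment differential detects every tangent direction. This rests on the identification of $N_v$ as the minimal-norm tangent velocity in \cref{prop:resolved-speed} and on positive definiteness of $g_S$. The remaining care is to shrink neighborhoods uniformly so that the chart and the segments remain inside $S$; this is possible because $S$ is a smooth locally closed manifold.
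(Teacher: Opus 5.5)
Your proposal is correct and follows the paper's proof essentially step for step. The steps match: injectivity of the moment differentials comes from $N_v$ lying in the $L^2(\mu_s)$-closure of ambient gradients together with positive definiteness of $g_S$; then the inverse function theorem, the bound $W_1\le W_2$ for Lipschitz test functions, and \cref{prop:stratum-distance} combined with a local upper bound on $g_S$ give the two inequalities. Your care about convex neighborhoods only makes explicit what the paper leaves implicit; for the first inequality, take the neighborhood convex in the $\Theta$-image so that $\Theta^{-1}$ is Lipschitz there.
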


\begin{proof}
By \eqref{eq:firstvariation}, the differentials of the moment functions are
$v\mapsto\int d\phi(N_v)\,d\mu_s$.  These differentials separate tangent vectors.  If they all vanish, then
$N_v$ is orthogonal to every ambient gradient, while \cref{prop:resolved-speed}
places $N_v$ in their closed span.  Orthogonality within this
closed span forces $N_v=0$; positivity of $g_S$ then forces $v=0$.  Choose $d$ of these moments with independent differentials; the inverse function
theorem makes them local coordinates.
Each $\phi_j$ is Lipschitz on compact $\PP^n$, and therefore
\[
 \left|\int\phi_j\,d\mu_s-\int\phi_j\,d\mu_t\right|
 \le \operatorname{Lip}(\phi_j)W_1(\mu_s,\mu_t)
 \le \operatorname{Lip}(\phi_j)W_2(\mu_s,\mu_t).
\]
The inverse coordinate map provides the first inequality in
\eqref{eq:moment-comparison}.  For the second, \cref{prop:stratum-distance}
implies $\bar d(s,t)\le d_{g_S}(s,t)$ for nearby points in the same stratum, and
the smooth local upper bound on $g_S$ implies $d_{g_S}(s,t)\le C|s-t|$.
\end{proof}

\begin{lemma}\label{lem:coincidence}
Let $\gamma_1,\gamma_2:I\to(Z,d)$ be Lipschitz curves into a metric space and let
$E\subset I$ be measurable.  If $\gamma_1=\gamma_2$ on $E$, then
\[
 |\dot\gamma_1|(t)=|\dot\gamma_2|(t)
\]
for almost every $t\in E$ for which the two metric derivatives exist.  The
conclusion also holds for ordinary derivatives of real Lipschitz functions.
\end{lemma}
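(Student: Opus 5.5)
The statement is a density-point argument, and I will prove it first for real Lipschitz functions and then reduce the metric case to that one.

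\emph{Real-valued case.} Let $f=\gamma_1-\gamma_2$. This is a real Lipschitz function that vanishes on $E$. By Lebesgue's density theorem, almost every $t\in E$ is a density point of $E$. Take such a $t$ at which $f$ is differentiable. I claim $f'(t)=0$. Suppose instead that $|f'(t)|=c>0$. Then $|f(t+h)|\ge c|h|/2$ for all small $h$, so $f$ does not vanish at any $t+h$ with $0<|h|<\delta$. Hence $E\cap(t-\delta,t+\delta)=\{t\}$, which contradicts $t$ being a density point. Therefore $\gamma_1'(t)=\gamma_2'(t)$ at almost every $t\in E$ where both derivatives exist.

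\emph{Metric case.} Let $L$ be a common Lipschitz constant. Take a density point $t\in E$ at which both metric derivatives exist. For each small $h$, choose $t'\in E$ with $|t+h-t'|\le\eps(h)|h|$, where $\eps(h)\to0$. Such $t'$ exists at a density point: otherwise an interval of length $\eps|h|$ inside $(t-2|h|,t+2|h|)$ would miss $E$, and for fixed $\eps$ this cannot happen along a sequence $h\to0$.

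Since $\gamma_1(t)=\gamma_2(t)$ and $\gamma_1(t')=\gamma_2(t')$, the triangle inequality gives
\[
 \bigl|d(\gamma_1(t+h),\gamma_1(t))-d(\gamma_2(t+h),\gamma_2(t))\bigr|
 \le d(\gamma_1(t+h),\gamma_1(t'))+d(\gamma_2(t+h),\gamma_2(t'))\le 2L\eps(h)|h|.
\]
Dividing by $|h|$ and letting $h\to0$ shows that the two metric derivatives agree at $t$.

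\emph{Main obstacle.} The only delicate point is producing the nearby point $t'$ with $|t+h-t'|=o(|h|)$. This is exactly where density, and not mere accumulation, is needed. I would state it as a short sublemma:

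\begin{quote}
If $t$ is a density point of $E$, then $\dist(t+h,E)=o(|h|)$.
\end{quote}

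Its proof is the measure count above: a gap of length $\eps|h|$ within the interval $(t-2|h|,t+2|h|)$ keeps the relative density of $E$ there at most $1-\eps/4$, so such gaps cannot persist as $h\to0$.
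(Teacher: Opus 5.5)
Your proof is correct, but in the metric case it takes a heavier route than the paper.

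\emph{The paper's argument.} It fixes a density point $t\in E$ at which both metric derivatives exist, and picks any sequence $t_k\in E\setminus\{t\}$ with $t_k\to t$. Along this sequence the quotients $d(\gamma_1(t_k),\gamma_1(t))/|t_k-t|$ and $d(\gamma_2(t_k),\gamma_2(t))/|t_k-t|$ are literally equal, and the paper passes to the limit. This is legitimate because a metric derivative that exists is a full limit, so it may be computed along any sequence. The argument therefore uses only that $t$ is an accumulation point of $E$ (true off a countable set) and needs no Lipschitz bound.

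\emph{Your argument.} You compare the difference quotients at every nearby $t+h$, approximating $t+h$ to within $o(|h|)$ by points of $E$. This is where you genuinely need the density point and the Lipschitz constant. In exchange you get a stronger conclusion: the two quotients are asymptotically equal along all $h\to0$. Hence the upper and lower metric derivatives agree, and existence of one metric derivative at $t$ forces existence of the other.

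Under the lemma's hypothesis that both derivatives exist, this extra strength is unnecessary. So your remark that density ``and not mere accumulation'' is needed is not accurate for the statement as posed. Indeed, your own real-valued argument only uses $E\cap(t-\delta,t+\delta)\neq\{t\}$. Also, the announced reduction of the metric case to the real one never actually takes place; your metric argument is self-contained, which is harmless.
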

\begin{proof}
For almost every $t\in E$, $t$ is a density point of $E$ and both metric derivatives
exist.  Fix one such $t$.  There is a sequence
$t_k\in E\setminus\{t\}$ with $t_k\to t$.  Since
$\gamma_1(t_k)=\gamma_2(t_k)$ and $\gamma_1(t)=\gamma_2(t)$,
\[
 \frac{d(\gamma_1(t_k),\gamma_1(t))}{|t_k-t|}
 =
 \frac{d(\gamma_2(t_k),\gamma_2(t))}{|t_k-t|}.
\]
Passing to the metric derivative limits proves the claim.  The argument for
real-valued Lipschitz functions is the same.
\end{proof}

On each positive-dimensional stratum, choose countably many moment charts
$\Theta_j:U_j\to B_j$ onto Euclidean balls $B_j\subset\R^d$, together with
compact cores $K_j\Subset U_j$ whose interiors cover the stratum and whose
images are contained in smaller concentric balls.  Thus, after shrinking if
necessary,
$\Theta_j(K_j)$ has positive Euclidean distance from
$\partial\Theta_j(U_j)=\partial B_j$.

\begin{lemma}\label{lem:stitch}
Let $f:[a,b]\to\R$ be continuous.  Suppose a countable Borel partition $(E_j)$ and
Lipschitz functions $f_j:[a,b]\to\R$ satisfy
\[
 f=f_j\ \text{on }E_j,
 \qquad
 |f_j'|\le L\ \text{a.e.\ on }E_j.
\]
Then $f$ is $L$-Lipschitz.
\end{lemma}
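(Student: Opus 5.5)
The plan is to avoid differentiating $f$ at all and instead bound the Lebesgue measure of the image $f([s,t])$ of each subinterval. Since $f$ is continuous, $f([s,t])$ is an interval containing $f(s)$ and $f(t)$. Hence
\[
 |f(t)-f(s)|\le \mathcal L^1\bigl(f([s,t])\bigr),
\]
and it suffices to prove $\mathcal L^1(f([s,t]))\le L(t-s)$ for all $a\le s\le t\le b$.

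\textbf{Step 1: decompose the image.} Since $(E_j)$ partitions $[a,b]$ and $f=f_j$ on $E_j$, we have
\[
 f([s,t])=\bigcup_j f\bigl(E_j\cap[s,t]\bigr)=\bigcup_j f_j\bigl(E_j\cap[s,t]\bigr).
\]
Countable subadditivity of outer measure then gives
\[
 \mathcal L^1\bigl(f([s,t])\bigr)\le\sum_j \mathcal L^1_*\bigl(f_j(E_j\cap[s,t])\bigr),
\]
where $\mathcal L^1_*$ denotes outer measure. This step is harmless; images of Borel sets under Lipschitz maps are analytic, hence measurable, but outer measure suffices anyway.

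\textbf{Step 2: a one-dimensional area inequality.} This is the main analytic input. For a Lipschitz $g:[a,b]\to\R$ and a measurable $A\subset[a,b]$, I will use the standard estimate
\[
 \mathcal L^1(g(A))\le\int_A|g'|\,d\mathcal L^1,
\]
which follows from the one-dimensional area formula (Banach indicatrix). To apply it, split $E_j\cap[s,t]$ into two pieces:
\begin{itemize}
\item a null set $N_j$, where $f_j$ is not differentiable or $|f_j'|\le L$ fails;
\item the remainder.
\end{itemize}
Lipschitz maps send null sets to null sets, so $f_j(N_j)$ is null. On the remainder, the area inequality gives measure at most $L\,\mathcal L^1(E_j\cap[s,t])$.

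If I wanted to avoid citing the area formula, I would prove the inequality directly. The idea is to cover the set $\{|g'|\le L\}$ by small intervals on which the difference quotients are at most $L+\varepsilon$, using a Vitali-type argument, and then let $\varepsilon\to0$. I expect this to be the only step requiring care, because it is where the pointwise derivative bound, available only almost everywhere on $E_j$, must be converted into a measure bound on the image.

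\textbf{Step 3: sum and conclude.} Summing over $j$ and using that the $E_j$ are disjoint gives
\[
 \mathcal L^1\bigl(f([s,t])\bigr)\le L\sum_j\mathcal L^1\bigl(E_j\cap[s,t]\bigr)=L(t-s).
\]
Combined with the opening inequality, this yields $|f(t)-f(s)|\le L|t-s|$, so $f$ is $L$-Lipschitz.

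Continuity of $f$ is used only to know that $f([s,t])$ is an interval. This is precisely the hypothesis that rules out Cantor-type or jump counterexamples.
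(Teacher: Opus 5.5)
Your proposal is correct and follows essentially the same route as the paper. Both bound $|f(t)-f(s)|$ by $\mathcal L^1(f([s,t]))$ using that $f([s,t])$ is an interval, write $f([s,t])=\bigcup_j f_j(E_j\cap[s,t])$, apply the one-dimensional area inequality $\mathcal L^1(f_j(E_j\cap[s,t]))\le\int_{E_j\cap[s,t]}|f_j'|\le L\,\mathcal L^1(E_j\cap[s,t])$ on each piece, and sum over the partition (your separate handling of the null set is harmless but unnecessary, since it contributes nothing to that integral).
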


\begin{proof}
For every interval $I=[s,t]$, the one-dimensional area formula implies
\[
 \mathcal L^1(f_j(E_j\cap I))
 \le
 \int_{E_j\cap I}|f_j'|
 \le
 L\,\mathcal L^1(E_j\cap I).
\]
Because $f(I)=\bigcup_jf_j(E_j\cap I)$ and continuity makes $f(I)$ an interval,
\[
 |f(t)-f(s)|
 \le
 \mathcal L^1(f(I))
 \le
 L(t-s).
\]
See \cite[Chapter~2]{AFP}.
\end{proof}

\subsection{Globalization across strata}\label{sec:action-proof}

\begin{proof}[Proof of \cref{thm:full-action}]
The forward implication in \eqref{eq:ACequiv}, together with
$|\dot\mu_t|_{W_2}\le|\dot\gamma(t)|_{\bar d}$, follows from the
$1$-Lipschitz continuity of $\mathcal M$.  Conversely, suppose $\mu_\cdot$ is
absolutely continuous and set
\[
 \ell(t)=\int_0^t|\dot\mu_s|_{W_2}\,ds.
\]
If $\ell(T)=0$, then $\mu_t$ is constant on $[0,T]$.  Each fiber of
$\mathcal M$ is finite, while $\gamma([0,T])$ is connected, so $\gamma$ must be
constant as well.  Assume henceforth that $\ell(T)>0$.
Whenever $\ell$ is constant on an interval, the measure curve $\mu_t$ is constant
on that interval.  The fibers of
$\mathcal M$ are finite because cycle measures determine the Chow point and
normalization is finite; the connected image of that interval under the continuous
curve $\gamma$ must be a single point.  As $\ell$ is continuous and nondecreasing, every level set $\ell^{-1}(r)$ is a
closed interval (possibly a point); hence $\gamma$ is constant on every fiber of
$\ell$.  The curve therefore factors
uniquely through
$\widetilde\gamma:[0,\ell(T)]\to Y$ with
\[
 \gamma=\widetilde\gamma\circ\ell.
\]
It is continuous because the continuous surjection
$\ell:[0,T]\to[0,\ell(T)]$ is a quotient map (compact domain, Hausdorff target).
The associated measure curve
$\widetilde\mu=\mathcal M\circ\widetilde\gamma$ is the usual arclength
reparametrization of $\mu$ and hence $1$-Lipschitz.  It remains to prove the same
bound for $\widetilde\gamma$ in $(Y,\bar d)$.

On the positive-dimensional strata, use the compact cores of the moment coordinate charts
$K_j\Subset U_j$ chosen above and refine the sets
$\widetilde\gamma^{-1}(K_j)$ to a disjoint Borel family.  For each
zero-dimensional stratum $\{q\}$, add the Borel piece
$E_q=\widetilde\gamma^{-1}(q)$.  These sets form a countable Borel partition of
the parameter interval.

Fix $p\in B$ and put $f(t)=\bar d(p,\widetilde\gamma(t))$.  On a
positive-dimensional piece $E_j$, \eqref{eq:moment-comparison} makes $f|_{E_j}$
Lipschitz in the arclength parameter; choose a McShane extension $f_j$ to the full
parameter interval.  On $E_q$ take instead the constant function
$f_q\equiv\bar d(p,q)$, for which $f=f_q$ on $E_q$ and $f_q'=0$.

For each positive-dimensional piece $E_j$, the set $E_j$ need not contain an
interval, so $\widetilde\gamma$ cannot be treated locally as a curve staying in one
stratum.  Moment coordinates provide a substitute: they produce a genuine
stratum-valued comparison curve that agrees with $\widetilde\gamma$ on $E_j$.  Write
\[
 m_j(\tau)=
 \left(
  \int\phi_1\,d\widetilde\mu_\tau,\ldots,
  \int\phi_d\,d\widetilde\mu_\tau
 \right)
\]
and set
\[
 \delta_j=\dist\bigl(\Theta_j(K_j),\partial\Theta_j(U_j)\bigr)>0,
 \qquad
 O_j=\left\{\tau:\dist\bigl(m_j(\tau),\Theta_j(K_j)\bigr)<\delta_j/2\right\}.
\]
The set $O_j$ is open and contains $E_j$.  On $O_j$ define the comparison curve
\[
 \sigma_j(\tau)=\Theta_j^{-1}(m_j(\tau)).
\]
Its image stays in a compact subset of $U_j$ on which $\Theta_j^{-1}$ has bounded
derivative.  Because $m_j$ is Lipschitz, $\sigma_j$ is Lipschitz on each connected
component of $O_j$; its measure curve is Lipschitz as well by
\eqref{eq:moment-comparison} and $W_2\le\bar d$.  On $E_j$ we also have $\sigma_j=\widetilde\gamma$.  The open subset $O_j\subset\R$ has at
most countably many connected components.  On each such interval,
\cref{prop:stratum-distance} provides
\[
 \left|\frac d{d\tau}\bar d(p,\sigma_j(\tau))\right|
 \le
 \left|\frac d{d\tau}\mu_{\sigma_j(\tau)}\right|_{W_2}.
\]
On $E_j$ the comparison measure curve coincides with $\widetilde\mu$, which is
$1$-Lipschitz.  Applying \cref{lem:coincidence} on the countably many components
of $O_j$ shows
\[
 |\partial_\tau\mu_{\sigma_j}|_{W_2}
 =|\partial_\tau\widetilde\mu|_{W_2}\le1
 \quad\text{for a.e. }\tau\in E_j.
\]
The real-valued version of \cref{lem:coincidence}, applied to $f_j$ and
$\tau\mapsto\bar d(p,\sigma_j(\tau))$, shows that
$|f_j'|\le1$ for almost every $\tau\in E_j$.  Taking the countable union of
these exceptional sets preserves nullity.

The stitching lemma now makes $f$ globally $1$-Lipschitz.  To recover the metric
from these scalar estimates, choose a countable dense set $D_0\subset B$; it is
dense in $(Y,\bar d)$ because $B$ is dense in its completion.
For every $x,y\in Y$,
\[
 \bar d(x,y)=\sup_{p\in D_0}\bigl|\bar d(p,x)-\bar d(p,y)\bigr|,
\]
because the triangle inequality bounds the supremum by $\bar d(x,y)$, while
choosing points of $D_0$ converging to $x$ recovers the reverse inequality.  Applying
the preceding estimate for every $p\in D_0$ shows that
$\widetilde\gamma$ itself is $1$-Lipschitz for $\bar d$.  Returning to the original
parameter,
\[
 \bar d(\gamma(s),\gamma(t))
 \le
 \int_s^t|\dot\mu_r|_{W_2}\,dr.
\]
We have $|\dot\gamma|_{\bar d}\le|\dot\mu|_{W_2}$.  The reverse inequality comes from
the global $1$-Lipschitz property of $\mathcal M$, giving equality of the metric
derivatives almost everywhere.  Their $L^p$-integrability is equivalent for every
$1\le p\le\infty$, which proves both the speed identity and the $AC^p$
equivalence.
\end{proof}

\subsection{Consequences and recovery}

\begin{corollary}\label{cor:path-length}
For every continuous curve $\gamma:[a,b]\to Y$,
\[
 L_{\bar d}(\gamma)
 =L_{W_2}(\mathcal M\circ\gamma)
 \in[0,\infty],
\]
where $L_d$ denotes metric length.  Every continuous normalization path has the same length as its
Wasserstein image.
\end{corollary}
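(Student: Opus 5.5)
The plan is to deduce the length identity directly from \cref{thm:full-action} together with the $1$-Lipschitz property of $\mathcal M$. Set $\mu_t=\mathcal M(\gamma(t))$. Since $W_2\le\bar d$, every partition sum for $\mathcal M\circ\gamma$ is bounded by the corresponding sum for $\gamma$. Hence $L_{W_2}(\mathcal M\circ\gamma)\le L_{\bar d}(\gamma)$ always, with the value $+\infty$ allowed. It remains to prove the reverse inequality when $L_{W_2}(\mathcal M\circ\gamma)<\infty$.

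Assume $\ell:=L_{W_2}(\mu_\cdot)<\infty$. A continuous curve of finite length in a metric space admits an arclength reparametrization. Let $s(t)$ denote the $W_2$-length of $\mu$ restricted to $[a,t]$. This function is continuous, because $\mu$ is continuous: $\gamma$ is continuous and $\mathcal M$ is Lipschitz. It is also nondecreasing. On every interval where $s$ is constant, $\mu$ is constant. The argument at the start of the proof of \cref{thm:full-action} then applies: fibers of $\mathcal M$ are finite and images of intervals are connected, so $\gamma$ is constant there as well. Consequently $\gamma=\widetilde\gamma\circ s$ for a continuous $\widetilde\gamma:[0,\ell]\to Y$, using that $s$ is a quotient map. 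The measure curve $\mathcal M\circ\widetilde\gamma$ is the arclength parametrization of $\mu$. It is therefore $1$-Lipschitz, and in particular lies in $AC^\infty([0,\ell],W_2)$.

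Next apply \cref{thm:full-action} to $\widetilde\gamma$. It shows that $\widetilde\gamma\in AC([0,\ell],\bar d)$ with $|\dot{\widetilde\gamma}|_{\bar d}=|\dot{\widetilde\mu}|_{W_2}$ almost everywhere. For absolutely continuous curves, metric length equals the integral of the metric derivative (\cite[Chapter~1]{AGS}). Hence
\[
L_{\bar d}(\widetilde\gamma)=\int_0^\ell|\dot{\widetilde\gamma}|_{\bar d}=\int_0^\ell|\dot{\widetilde\mu}|_{W_2}=L_{W_2}(\widetilde\mu)=\ell.
\]
Length is invariant under monotone continuous reparametrization, including reparametrizations that collapse intervals on which the curve is constant. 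Therefore $L_{\bar d}(\gamma)=L_{\bar d}(\widetilde\gamma)$ and $L_{W_2}(\mu)=\ell$, which completes the proof.

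The only delicate point is that $\gamma$ itself need not be absolutely continuous in its original parameter when only finite length is known, so \cref{thm:full-action} cannot be applied to $\gamma$ directly. This is why the reparametrization step comes first. The fiber-constancy argument it requires is the same one used in the proof of \cref{thm:full-action}; it relies on the continuity of $\gamma$, as noted in \cref{rem:continuity-essential}, so the main work is already done there.
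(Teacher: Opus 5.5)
Your proposal is correct and follows essentially the same route as the paper: the inequality $L_{W_2}(\mathcal M\circ\gamma)\le L_{\bar d}(\gamma)$ from $W_2\le\bar d$, arclength factorization $\gamma=\widetilde\gamma\circ s$ via finite fibers and the quotient map, then \cref{thm:full-action} applied to $\widetilde\gamma$ and invariance of length under monotone reparametrization. The only detail you leave implicit is the degenerate case $\ell=0$, where $\gamma$ is constant and both lengths vanish; the paper treats it separately.
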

\begin{proof}
The global inequality $W_2\le\bar d$ implies
$L_{W_2}(\mathcal M\gamma)\le L_{\bar d}(\gamma)$.  Suppose the ambient length
is finite and let $\ell(t)$ be the accumulated $W_2$-length of
$\mathcal M\gamma$ on $[a,t]$.  If $\ell$ is constant on an interval, then
$\mathcal M\gamma$ is constant there.  The image of such an interval under
$\gamma$ is connected and lies in a finite fiber of $\mathcal M$, hence consists of
a single point.  The curve therefore factors as
$\gamma=\widetilde\gamma\circ\ell$ for a continuous curve
$\widetilde\gamma$ on $[0,L]$, where
$L=L_{W_2}(\mathcal M\gamma)$.  If $L=0$, this already forces $\gamma$ to be constant, and the conclusion is
immediate.  Otherwise the factor
$\widetilde\gamma$ is continuous because $\ell$ is a quotient map.  Its measure
curve is the arclength reparametrization of $\mathcal M\gamma$ and is $1$-Lipschitz.  By
\cref{thm:full-action},
$\widetilde\gamma$ is $1$-Lipschitz for $\bar d$ and the two metric speeds agree
almost everywhere.  Metric length is invariant under this continuous nondecreasing reparametrization,
so both lengths equal $L$.  If the ambient length is infinite, the first inequality
forces $L_{\bar d}(\gamma)=\infty$ as well.
\end{proof}

Recovery from the dense locus is a purely metric approximation problem, for which
we use the following lemma.
\begin{lemma}\label{lem:energy-recovery}
Let $(\overline Z,d)$ be the metric completion of a length space $(Z,d)$ and let
$\eta\in AC^2([0,1],\overline Z)$.  Then there are curves
$\eta_k\in AC^2([0,1],Z)$ such that
\begin{equation}\label{eq:energy-recovery}
 \sup_{t\in[0,1]}d(\eta_k(t),\eta(t))\longrightarrow0,
 \qquad
 \int_0^1|\dot\eta_k|^2\,dt
 \longrightarrow
 \int_0^1|\dot\eta|^2\,dt.
\end{equation}
In particular, $\eta_k(0)\to\eta(0)$ and $\eta_k(1)\to\eta(1)$.  If either
endpoint of $\eta$ belongs to $Z$, the corresponding endpoint of every
$\eta_k$ may be chosen equal to it.
\end{lemma}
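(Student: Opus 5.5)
We prove \cref{lem:energy-recovery} by a purely metric discretize-and-interpolate construction; no algebraic structure of $Y$ enters. Let $E=\int_0^1|\dot\eta|^2\,dt$. The case $E=0$ is trivial: $\eta$ is constant, and we approximate the constant by constants $z_k\in Z$ (or by the endpoint itself when it lies in $Z$). For $E>0$, fix $k$ and a partition $0=t_0<t_1<\dots<t_N=1$ of mesh at most $1/k$. Density of $Z$ in $\overline Z$ gives points $z_i\in Z$ with $d(z_i,\eta(t_i))<\eps_k$, where $\eps_k\downarrow0$ is chosen small relative to the mesh and to the increments $\ell_i:=\int_{t_{i-1}}^{t_i}|\dot\eta|\,dt$. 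If $\eta(0)$ or $\eta(1)$ lies in $Z$, we take $z_0=\eta(0)$ or $z_N=\eta(1)$ exactly.

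Since $(Z,d)$ is a length space, $z_{i-1}$ and $z_i$ are joined by a curve in $Z$ of length at most $d(z_{i-1},z_i)+\eps_k/N$. By the triangle inequality, $d(z_{i-1},z_i)\le \ell_i+2\eps_k$. We reparametrize this curve at constant speed on $[t_{i-1},t_i]$ and concatenate the pieces to obtain $\eta_k$, which is Lipschitz in $Z$. Its energy satisfies
\[
 \int_0^1|\dot\eta_k|^2\,dt\le\sum_i\frac{(\ell_i+3\eps_k)^2}{t_i-t_{i-1}}.
\]
The Cauchy--Schwarz inequality gives $\ell_i^2/(t_i-t_{i-1})\le\int_{t_{i-1}}^{t_i}|\dot\eta|^2$. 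To absorb the cross terms we take $\eps_k\le \delta_k\min_i(t_i-t_{i-1})/N$ with $\delta_k\to0$; together with $\ell_i\le\sqrt{E(t_i-t_{i-1})}$, this yields $\limsup_k\int|\dot\eta_k|^2\le E$.

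Uniform convergence follows next. For $t\in[t_{i-1},t_i]$, the point $\eta(t)$ lies within $\ell_i$ of $\eta(t_{i-1})$, and $\eta_k(t)$ lies within $\ell_i+3\eps_k$ of $z_{i-1}$. Hence $\sup_t d(\eta_k(t),\eta(t))\le 2\max_i\ell_i+4\eps_k$. This bound tends to zero because $\ell_i\le\sqrt{E/k}$ by Cauchy--Schwarz.

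The step we expect to be the main obstacle is the matching lower bound $\liminf_k\int|\dot\eta_k|^2\ge E$, which upgrades the $\limsup$ estimate to convergence of the energies. We plan to obtain it from lower semicontinuity of the $2$-energy under pointwise convergence in the complete metric space $\overline Z$. The standard characterization $\int|\dot\eta|^2=\sup\sum_i d(\eta(s_{i-1}),\eta(s_i))^2/(s_i-s_{i-1})$ over partitions expresses the energy as a supremum of functions that are continuous under pointwise convergence, which gives the $\liminf$ inequality (cf.\ \cite[Chapter~1]{AGS}). Combining the two bounds proves \eqref{eq:energy-recovery}. The endpoint assertions follow from the uniform convergence, or hold exactly when the endpoints were fixed at the start.
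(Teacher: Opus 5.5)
Your proposal is correct and follows essentially the same discretize-and-interpolate argument as the paper. Both proofs approximate the partition nodes from the dense set $Z$, fixing endpoints when possible, and join them by near-geodesic constant-speed interpolants. Both get the upper energy bound from this construction and the lower bound from the partition characterization of the $2$-energy under uniform convergence. The only differences are cosmetic. You bound the interpolant energy with the length increments $\ell_i$ and Cauchy--Schwarz, where the paper uses discrete energies along a refining partition sequence. You get uniform convergence from $\ell_i\le\sqrt{E/k}$, where the paper uses the modulus of continuity of $\eta$.
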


\begin{proof}
For an $AC^2$ curve in a metric space, the quadratic energy admits the discrete
characterization
\begin{equation}\label{eq:discrete-energy}
 \int_0^1|\dot\eta|^2\,dt
 =\sup_{\mathcal P}\sum_i
 \frac{d(\eta(t_i),\eta(t_{i+1}))^2}{t_{i+1}-t_i},
\end{equation}
where the supremum runs over finite partitions
$\mathcal P=\{0=t_0<\cdots<t_N=1\}$; see, e.g., \cite[Chapter~1]{AGS}.
The discrete energy is nondecreasing under refinement.  Indeed, inserting
$r\in(s,t)$ gives
\[
  \frac{d(x,z)^2}{t-s}
  \le
  \frac{d(x,y)^2}{r-s}
  +\frac{d(y,z)^2}{t-r},
\]
by the triangle inequality and Cauchy--Schwarz.  Starting from partitions whose
discrete energies approach the supremum and taking successive common
refinements with partitions of mesh at most $1/k$, we obtain a refining sequence
$\mathcal P_k$ whose mesh tends to zero and whose discrete energies converge to
\eqref{eq:discrete-energy}.

For the nodes of $\mathcal P_k$, choose $z_i^k\in Z$ with
\[
 \max_i d(z_i^k,\eta(t_i^k))\le\delta_k,
\]
where $0<\delta_k\le k^{-1}$ is small enough that the perturbation of the
finite discrete-energy sum is at most $1/k$.  If $\eta(0)\in Z$, take
$z_0^k=\eta(0)$, and similarly take $z_N^k=\eta(1)$ whenever
$\eta(1)\in Z$.  This is possible because every partition has finitely many
nodes and $Z$ is dense in its completion.  As $Z$ is a length space, join
$z_i^k$ to $z_{i+1}^k$ by a curve in $Z$ of length at most
\[
 d(z_i^k,z_{i+1}^k)+\epsilon_{i,k},
\]
where the positive errors are chosen so that their contribution to the sum of
squared lengths divided by $t_{i+1}^k-t_i^k$ is at most $1/k$ and
$\max_i\epsilon_{i,k}\le1/k$.  Parametrize each connecting piece with constant
speed on its partition interval and concatenate.  Denote the resulting curve by
$\eta_k$.

Its energy is bounded above by the perturbed discrete energy, so
\[
 \limsup_{k\to\infty}\int_0^1|\dot\eta_k|^2\,dt
 \le \int_0^1|\dot\eta|^2\,dt.
\]
Let $\omega_\eta$ be the modulus of continuity of $\eta$ and
$|\mathcal P_k|$ the mesh.  On the $i$th partition interval,
\[
 \sup_{t\in[t_i^k,t_{i+1}^k]}d(\eta_k(t),\eta(t))
 \le 2\omega_\eta(|\mathcal P_k|)+3\delta_k+\epsilon_{i,k},
\]
so $\eta_k\to\eta$ uniformly.  For every fixed finite partition
$\mathcal P=\{0=s_0<\cdots<s_M=1\}$, uniform convergence and
\eqref{eq:discrete-energy} imply
\[
 \sum_{j=0}^{M-1}
 \frac{d(\eta(s_j),\eta(s_{j+1}))^2}{s_{j+1}-s_j}
 =\lim_{k\to\infty}
 \sum_{j=0}^{M-1}
 \frac{d(\eta_k(s_j),\eta_k(s_{j+1}))^2}{s_{j+1}-s_j}
 \le\liminf_{k\to\infty}\int_0^1|\dot\eta_k|^2\,dt.
\]
Taking the supremum over $\mathcal P$ gives the reverse energy inequality.  With
the upper bound already proved, this is \eqref{eq:energy-recovery}.  The endpoint
statements are built into the choice of nodes.
\end{proof}

\begin{corollary}\label{cor:action-distance}
For $y_0,y_1\in Y$,
\begin{equation}\label{eq:action-distance}
 \bar d(y_0,y_1)^2
 =
 \inf_{\substack{\gamma\in C([0,1],Y),\ \gamma(0)=y_0,\ \gamma(1)=y_1\\
                  \mathcal M\gamma\in AC^2(W_2)}}
 \int_0^1
 \left|\frac d{dt}\mathcal M(\gamma(t))\right|_{W_2}^2\,dt.
\end{equation}
The infimum is attained by a constant-speed $\bar d$ geodesic.  More generally,
every admissible curve $\gamma$ of finite action admits recovery curves
$\gamma_k\in AC^2([0,1],B)$ such that
\[
 \sup_{t\in[0,1]}\bar d(\gamma_k(t),\gamma(t))\to0
\]
and their ambient Wasserstein actions converge to that of $\mathcal M\gamma$.
Whenever $\gamma(0)\in B$ or $\gamma(1)\in B$, the corresponding endpoint can
be kept fixed for every $k$; in particular, both endpoints are preserved when
they belong to $B$.
\end{corollary}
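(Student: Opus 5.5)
The plan is to prove the two inequalities in \eqref{eq:action-distance} separately and to obtain the recovery statement from \cref{lem:energy-recovery}.

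\emph{Lower bound.} Take an admissible $\gamma\in C([0,1],Y)$ with $\gamma(0)=y_0$, $\gamma(1)=y_1$ and $\mathcal M\gamma\in AC^2(W_2)$. By \cref{thm:full-action} with $p=2$, $\gamma\in AC^2([0,1],\bar d)$ and $|\dot\gamma|_{\bar d}=|\dot\mu_t|_{W_2}$ almost everywhere. Cauchy--Schwarz on the unit interval then gives
\[
\bar d(y_0,y_1)^2\le\Bigl(\int_0^1|\dot\gamma|_{\bar d}\,dt\Bigr)^2\le\int_0^1|\dot\mu_t|_{W_2}^2\,dt,
\]
so $\bar d(y_0,y_1)^2$ is at most the infimum.

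\emph{Attainment.} By \cref{thm:main}, $(Y,\bar d)$ is geodesic. I take a constant-speed geodesic $\eta$ from $y_0$ to $y_1$. It is $\bar d(y_0,y_1)$-Lipschitz and continuous, so its measure image is in $AC^2$ because $\mathcal M$ is $1$-Lipschitz. By \cref{thm:full-action}, its action equals $\int_0^1|\dot\eta|_{\bar d}^2=\bar d(y_0,y_1)^2$. Hence the infimum equals $\bar d(y_0,y_1)^2$ and is attained.

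\emph{Recovery.} For an admissible $\gamma$ of finite action, \cref{thm:full-action} places $\gamma$ in $AC^2([0,1],(Y,\bar d))$ and shows that its $\bar d$-energy equals the ambient action. By \cref{thm:main}, $(Y,\bar d)$ is the completion of the length space $(B,d_B)$ with $\bar d|_B=d_B$. Applying \cref{lem:energy-recovery} with $Z=B$ gives $\gamma_k\in AC^2([0,1],B)$ converging uniformly to $\gamma$, with $d_B$-energies converging, and with an endpoint fixed whenever it lies in $B$. It remains to identify the $d_B$-energy of $\gamma_k$ with its ambient Wasserstein action. Since $\gamma_k$ is a continuous curve in $Y$, \cref{thm:full-action} applies again: its $\bar d$-speed, which equals its $d_B$-speed because the metrics agree on $B$, coincides almost everywhere with $|\dot{\mathcal M\gamma_k}|_{W_2}$. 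Therefore the actions converge to the action of $\mathcal M\gamma$.

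The main subtlety is the step identifying $d_B$-energy with $\bar d$-energy. The metric speed is computed from $d_B=\bar d|_{B\times B}$, so the two agree, and no Riemannian regularity of $\gamma_k$ is needed. Apart from this, the argument is an assembly of \cref{thm:full-action}, \cref{thm:main} and \cref{lem:energy-recovery}.
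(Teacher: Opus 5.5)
Your proof is correct and follows essentially the same route as the paper: the lower bound and attainment come from \cref{thm:full-action} together with a constant-speed $\bar d$-geodesic, and the recovery curves come from \cref{lem:energy-recovery} applied to $(B,d_B)$ inside its completion $(Y,\bar d)$. The one small difference is how you identify the $d_B$-energies of the $\gamma_k$ with their ambient actions. You reapply \cref{thm:full-action} and use $\bar d|_{B\times B}=d_B$, whereas the paper invokes the Riemannian speed identity \eqref{eq:speed} on $B$. Your version avoids the standard but implicit fact that the $d_B$-metric derivative of a metrically absolutely continuous curve equals $\sqrt{g_B(\dot b,\dot b)}$.
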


\begin{proof}
By \cref{thm:full-action}, admissible curves and their ambient Wasserstein
actions are exactly the $AC^2$ curves and metric energies in $(Y,\bar d)$.  The
metric energy inequality
$\bar d(y_0,y_1)^2\le\int_0^1|\dot\gamma|_{\bar d}^2$ provides the lower bound in
\eqref{eq:action-distance}, while a constant-speed $\bar d$ geodesic attains
equality.

For the recovery statement, let $\gamma$ be any admissible curve of finite
action.  Then
$\gamma\in AC^2([0,1],Y)$ by \cref{thm:full-action}.  Apply
\cref{lem:energy-recovery} to the dense length subspace $(B,d_B)$ of its
completion $(Y,\bar d)$.  We obtain $\gamma_k\in AC^2([0,1],B)$ converging
uniformly to $\gamma$ and satisfying
\[
 \int_0^1|\dot\gamma_k|_{d_B}^2\,dt
 \longrightarrow
 \int_0^1|\dot\gamma|_{\bar d}^2\,dt.
\]
On $B$, \eqref{eq:speed} identifies the left-hand energies with the ambient
Wasserstein actions, while \eqref{eq:full-action-speed} identifies the limiting
energy with the action of $\mathcal M\gamma$.  The endpoint assertions are those
of \cref{lem:energy-recovery}.  Together with \cref{thm:full-action}, this
completes the proof of \cref{thm:intro-action}.
\end{proof}

\begin{corollary}
\label{cor:acl-identification}
Let $X=P_{n,d}$ and $B=P_{n,d}\setminus\Delta_{n,d}$.  With the convention
$\int_{\PP^1}\omega=1$ used in this paper, let $\bar d$ be the completed metric
of \cref{thm:main}.  Then the inner quadratic Wasserstein distance of
\cite{ACL} satisfies
\begin{equation}\label{eq:acl-exact-identification}
  W_{2,\mathrm{ACL}}^{\rm in}(p,q)
  =\sqrt{\pi}\,\bar d(p,q),
  \qquad p,q\in P_{n,d}.
\end{equation}
Equivalently, after rescaling the ambient Fubini--Study metric of \cite{ACL} so
that $\int_{\PP^1}\omega=1$, the two distances agree exactly.
\end{corollary}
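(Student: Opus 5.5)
The identity follows by combining three earlier results: the scaling relation \eqref{eq:acl-inner-scale-pre} from the proof of \cref{thm:acl-compactness}, the identification of $W_{2,0}^{\rm in}$ with $D_X$ from \cref{cor:chow-inner-compactness}, and the action formula \cref{cor:action-distance}.

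\textbf{Step 1: reduce to the normalized metric.} Since $P_{n,d}$ is smooth, it is normal, so $\nu=\id$, $Y=X$, and $\mathcal M=\mathcal M_{\rm hyp}$. The hypersurface locus $B=P_{n,d}\setminus\Delta_{n,d}$ is admissible, as noted before \cref{lem:hyp-critical-potential}, so $\bar d$ is defined. By \eqref{eq:acl-inner-scale-pre},
\[
W_{2,\mathrm{ACL}}^{\rm in}=\sqrt\pi\,W_{2,0}^{\rm in}.
\]
It therefore suffices to prove $W_{2,0}^{\rm in}=\bar d$ on $P_{n,d}$.

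\textbf{Step 2: compare the two infima.} Definition \eqref{eq:w20-inner-definition} is an infimum of the ambient $W_{2,0}$ action. It runs over continuous paths $\gamma:[0,1]\to P_{n,d}$ joining $p$ to $q$ whose measure image $\mathcal M\circ\gamma$ lies in $AC^2$. Because $Y=P_{n,d}$ and $\mathcal M(y)=\mu_y$, this is literally the right-hand side of \eqref{eq:action-distance} in \cref{cor:action-distance}. Both use the normalization $\int_{\PP^1}\omega=1$ for the ambient Wasserstein distance. That corollary gives
\[
\bar d(p,q)^2=\inf\int_0^1\bigl|\tfrac d{dt}\mathcal M(\gamma(t))\bigr|_{W_2}^2\,dt,
\]
which equals $W_{2,0}^{\rm in}(p,q)^2$. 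Taking square roots and multiplying by $\sqrt\pi$ gives \eqref{eq:acl-exact-identification}. The final ``equivalently'' sentence is just \eqref{eq:acl-fs-scale}: rescaling the ACL metric by $1/\pi$ removes the factor $\sqrt\pi$.

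\textbf{Main obstacle.} The delicate point is that the two path classes really coincide. Three things must be checked.

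\begin{enumerate}
\item The ACL inner distance must be defined with continuous parameter paths in $P_{n,d}$, not merely $AC^2$ measure curves inside the image $\mathcal M(P_{n,d})$. If ACL use measure curves in the image, injectivity and the homeomorphism $P_{n,d}\to\mathcal M(P_{n,d})$ from \cref{cor:chow-inner-compactness} convert one class into the other. Here $\nu=\id$, so there is no branch ambiguity to track (compare \cref{rem:continuity-essential}).
\item The ACL action must be the metric speed squared and integrated over $[0,1]$; any reparametrization convention changes nothing after the constant-speed argument.
\item The normalized measures must agree across the discriminant, which was already shown in the proof of \cref{thm:acl-compactness}.
\end{enumerate}

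Once these are settled, the identity is immediate from \cref{cor:action-distance}. The sharp $q=2$ inequality \eqref{eq:w20-bar-d} is thereby upgraded to an equality.
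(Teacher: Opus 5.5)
Your proposal is correct and follows the paper's route. You reduce via the scaling $W_{2,\mathrm{ACL}}^{\rm in}=\sqrt{\pi}\,W_{2,0}^{\rm in}$ from the proof of \cref{thm:acl-compactness}, then note that for the normal space $P_{n,d}$ (so $\nu=\id$ and $\mathcal M=\mathcal M_{\rm hyp}$) the infimum \eqref{eq:w20-inner-definition} is literally the right-hand side of \eqref{eq:action-distance}, so \cref{cor:action-distance} gives $W_{2,0}^{\rm in}=\bar d$. Your ``main obstacle'' checks just spell out what the paper compresses into ``the admissible path classes coincide'', and the detour through $D_X$ from \cref{cor:chow-inner-compactness} is not actually needed.
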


\begin{proof}
Write $W_{2,0}$ for the ambient Wasserstein distance associated with the
Fubini--Study metric used in this paper, normalized by $\int_{\PP^1}\omega=1$.  The comparison in the proof of
\cref{thm:acl-compactness} shows that the normalized measure map is the same in
the two constructions, while
$W_{2,\mathrm{ACL}}=\sqrt{\pi}\,W_{2,0}$.  The admissible path classes
coincide and
\[
  W_{2,\mathrm{ACL}}^{\rm in}=\sqrt{\pi}\,W_{2,0}^{\rm in}.
\]
Here $W_{2,0}^{\rm in}$ is the constrained inner action distance defined in
\eqref{eq:w20-inner-definition}.  Under the specialization $X=P_{n,d}$, the
general measure map $\mathcal M$ of Section~5 is exactly $\mathcal M_{\rm hyp}$.
Because $P_{n,d}$ is smooth and normal,
\cref{cor:action-distance} applied to $X=P_{n,d}$ identifies this constrained
action distance with the completed metric:
\[
  \bigl(W_{2,0}^{\rm in}(p,q)\bigr)^2
  =
  \inf_{\substack{\gamma\in C([0,1],P_{n,d}),\
                    \gamma(0)=p,\ \gamma(1)=q\\
                    \mathcal M\gamma\in AC^2(W_{2,0})}}
  \int_0^1
  \left|\frac d{dt}\mathcal M(\gamma(t))\right|_{W_{2,0}}^2\,dt
  =
  \bar d(p,q)^2.
\]
The two identities together give \eqref{eq:acl-exact-identification}.
\end{proof}

\begin{corollary}\label{cor:admissible-locus-independent}
Let $B_1,B_2\subset X_{\rm reg}$ be two admissible dense smooth Zariski-open
subsets, and let $\bar d_1,\bar d_2$ be the completion metrics on $Y=X^\nu$
obtained from their resolved Wasserstein metrics.  Then
\[
  \bar d_1=\bar d_2.
\]
For the fixed Chow embedding and ambient Fubini--Study normalization, the
construction is intrinsic to $X$ and does not depend on the admissible
parameter locus chosen at the outset.
\end{corollary}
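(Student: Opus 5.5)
The plan is to characterize each completion metric $\bar d_k$ by a formula that makes no reference to $B_k$, namely the minimal ambient action formula of \cref{cor:action-distance}. For $k=1,2$, \cref{thm:main} gives a canonical homeomorphism $\Phi_k:Y\to\widehat{B_k}$ extending the identity on $B_k$. It therefore produces a metric $\bar d_k$ on the fixed topological space $Y$, and this metric induces the normalization topology. In particular, the class $C([0,1],Y)$ of continuous normalization paths is the same for both metrics. The measure map $\mathcal M(y)=\mu_{\nu(y)}$ is defined purely from $\nu$ and the cycle measures \eqref{eq:measure}, so it also does not depend on $k$.

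Next, apply \cref{cor:action-distance} separately to $B_1$ and to $B_2$. For all $y_0,y_1\in Y$ it gives
\[
  \bar d_k(y_0,y_1)^2=\inf_{\gamma}\int_0^1\Bigl|\tfrac d{dt}\mathcal M(\gamma(t))\Bigr|_{W_2}^2\,dt .
\]
Here the infimum runs over $\gamma\in C([0,1],Y)$ with the prescribed endpoints and with $\mathcal M\gamma\in AC^2(W_2)$. Every ingredient of the right-hand side is independent of $k$: the topological space $Y$, the endpoint conditions, the map $\mathcal M$, and the ambient $W_2$ metric speed. The two infima therefore coincide, so $\bar d_1=\bar d_2$. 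This gives the intrinsic statement: for a fixed Chow embedding and Fubini--Study normalization, the construction depends only on $X$.

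The main point to check is that \cref{cor:action-distance}, and behind it \cref{thm:full-action}, was proved for an arbitrary admissible locus, with no hidden dependence on $B$ in the resulting formula. I would verify this by tracing the dependencies. The stratification of \cref{lem:strata} and the lift of \cref{lem:stratum-lift} are built from $Y$ and the resolution $\rho$. The only place $B$ enters is through $\bar d$ itself, via \cref{lem:weakgradient} and \eqref{eq:sharp-distance-current}, and via the dense set $D_0\subset B$. Both uses are valid for either locus, since each $B_k$ is dense in $(Y,\bar d_k)$ and the current $T_R$ is defined from $f=\chi\nu\rho$ alone.

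One subtlety is that the resolution $\rho$ was chosen to be an isomorphism over $B$. This causes no difficulty: I would choose a single resolution that is an isomorphism over $B_1\cap B_2$, with SNC complement. This intersection is again an admissible dense Zariski-open set, because admissibility is local and passes to open subsets. Alternatively, one can note that the action formula holds for each $\bar d_k$ with its own resolution, and the right-hand side of the formula never mentions the resolution.
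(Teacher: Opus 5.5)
Your proposal is correct and is essentially the paper's proof: apply \cref{cor:action-distance} to each $B_k$ and observe that the right-hand side depends only on $\mathcal M$, the topology of $Y$ (common to both metrics via the homeomorphisms $\Phi_k$), and the endpoint conditions. Your extra remarks on resolutions are harmless, since, as you note yourself, the action formula never mentions the resolution.
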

\begin{proof}
Apply \cref{cor:action-distance} to $B_1$ and $B_2$.  In both cases the squared
distance between $y_0,y_1\in Y$ is the infimum of
\[
  \int_0^1\left|\frac d{dt}\mathcal M(\gamma(t))\right|_{W_2}^2\,dt
\]
over the same class of continuous curves $\gamma:[0,1]\to Y$ with fixed
endpoints and $\mathcal M\gamma\in AC^2(W_2)$.  The right-hand side depends only on
$\mathcal M:Y\to\mathcal P_2(\PP^n)$, so the two metrics coincide.  This
completes the deferred locus-independence assertion in \cref{thm:main}.
\end{proof}

\section{Elliptic quartic boundary branches}\label{sec:quartic}
We conclude with elliptic quartics, where the abstract branch separation in
\cref{thm:main} becomes explicit through the Hilbert--Chow fiber.

Let $B_{2,2}$ be the locus of smooth complete intersections of two quadrics in
$\PP^3$ and set
\[
 X_{2,2}:=\overline{B_{2,2}}^{\,\Chow}_{\rm red}.
\]
Let
\[
   H=\Hilb^{4t}_1(\PP^3)
\]
be the principal Hilbert component containing smooth elliptic quartics.  The
classical compactification and blow-up description go back to
Avritzer--Vainsencher \cite{AV92}; a modern description as a double blow-up with smooth
centers appears in \cite{GLS}.  The Hilbert--Chow morphism restricts to a proper
birational morphism
\[
   q:H\longrightarrow X_{2,2},
\]
because it is an isomorphism over $B_{2,2}$ and its image is the reduced Chow
closure of that locus.

Let $C\subset \Pi\simeq\PP^2\subset\PP^3$ be an irreducible plane quartic.
Choose a linear equation $x=0$ for $\Pi$ and write
$\mathcal I_C=(x,F)$.  Here $I_\bullet$ denotes homogeneous or local defining
ideals, and $\mathcal I_\bullet$ the corresponding ideal sheaves.  The boundary
classification is taken from \cite[Theorem~5.2]{AV92}; for the type~(4) stratum
we use the coordinate form in \cite[Proposition~3.6]{CKSZ}.

Define the intrinsic length-two singularity locus
\begin{equation}\label{eq:quartic-ZC}
  \mathfrak Z_C
  :=\left\{Z\in\Hilb^2(\Pi):
  F\in H^0\bigl(\Pi,\mathcal I_{Z,\Pi}^{\,2}(4)\bigr)\right\}.
\end{equation}
For $Z\in\mathfrak Z_C$, put
$\mathcal J_Z=\mathcal I_{Z,\PP^3}$ and define the subscheme $Y_Z$ intrinsically by
\begin{equation}\label{eq:quartic-ideal-intrinsic}
   \mathcal I_{Y_Z}:=\mathcal I_C\cap\mathcal J_Z^2.
\end{equation}
\begin{lemma}\label{lem:quartic-hilbert}
Assume $F\in H^0(\Pi,\mathcal I_{Z,\Pi}^{\,2}(4))$.  There is a lift
$\widetilde F\in H^0(\PP^3,\mathcal J_Z^2(4))$ with
$\widetilde F|_\Pi=F$, and for every such lift
\begin{equation}\label{eq:quartic-ideal}
   \mathcal I_{Y_Z}=x\mathcal J_Z+(\widetilde F).
\end{equation}
The right-hand side is independent of the chosen lift, and
\begin{equation}\label{eq:quartic-quotient}
   \mathcal I_C/\mathcal I_{Y_Z}\simeq\mathcal O_Z(-1),
   \qquad P_{Y_Z}(t)=4t.
\end{equation}
Moreover,
\begin{equation}\label{eq:quartic-annihilator}
  \operatorname{Ann}_{\mathcal O_{\PP^3}}
  \bigl(\mathcal I_C/\mathcal I_{Y_Z}\bigr)=\mathcal J_Z,
\end{equation}
so the quotient recovers the entire length-two subscheme $Z$, not merely its
support.
\end{lemma}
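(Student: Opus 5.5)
The argument is local commutative algebra at the (at most two) support points of $Z$, plus one global lifting step. Throughout, $Z\subset\Pi$ is a length-two subscheme and $\mathcal J_Z\supset(x)$. In local coordinates at a support point, take $\mathcal J_Z=(x,u,v^k)$ with $k\in\{1,2\}$; here $(u,v)$ are plane coordinates and $\mathcal I_{Z,\Pi}=(u,v^k)$. Away from $\operatorname{supp}Z$ every statement is trivial, since there $\mathcal J_Z=\mathcal O$, so $\mathcal I_{Y_Z}=\mathcal I_C$ and the quotient vanishes.

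\emph{Lifting.} The restriction $\mathcal J_Z^2\to\mathcal I_{Z,\Pi}^{2}$ is surjective, with kernel $\mathcal J_Z^2\cap(x)=x\mathcal J_Z$. The intersection identity holds locally because $x$ is a nonzerodivisor and $\mathcal J_Z^2=x\mathcal J_Z+\mathcal I_{Z,\Pi}^2\mathcal O$, where $\mathcal I_{Z,\Pi}^2\mathcal O$ is generated by polynomials free of $x$. This gives the exact sequence
\[
0\to x\mathcal J_Z(3)\to\mathcal J_Z^2(4)\to\mathcal I_{Z,\Pi}^2(4)\to0 .
\]
Surjectivity on $H^0$ reduces to $H^1(\PP^3,\mathcal J_Z(3))=0$. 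This follows from $0\to\mathcal J_Z(3)\to\mathcal O(3)\to\mathcal O_Z\to0$, because a length-two scheme imposes independent conditions on cubics, so $H^0(\mathcal O(3))\to H^0(\mathcal O_Z)$ is onto, and $H^1(\mathcal O(3))=0$.

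\emph{Ideal identity.} The inclusion $x\mathcal J_Z+(\widetilde F)\subset\mathcal I_C\cap\mathcal J_Z^2$ is clear. For the reverse, take $g=ax+b\widetilde F$ in the intersection, which is possible since $\mathcal I_C=(x,\widetilde F)$. Restricting to $\Pi$ gives $bF\in\mathcal I_{Z,\Pi}^2$. The key point is that $F$ is a nonzerodivisor modulo $\mathcal I_{Z,\Pi}^2$ up to the unit-versus-nonunit issue. It suffices to show that $b\in\mathcal J_Z$ would force $b\widetilde F\in\mathcal J_Z^3\subset\mathcal J_Z^2$, and then $ax\in\mathcal J_Z^2\cap(x)=x\mathcal J_Z$ gives $a\in\mathcal J_Z$. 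If instead $b$ is a unit at the point, we simply rewrite $g$.

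Concretely, split on whether $b$ is a local unit. If it is not, then $b\in\mathfrak m$; I expect the main obstacle here, since $\mathfrak m\neq\mathcal J_Z$ when $k=2$. I would handle it by noting that $g-b\widetilde F\in(x)\cap(\mathcal J_Z^2+(\widetilde F))$. Independence of the lift follows because two lifts differ by an element of $x\mathcal J_Z$.

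\emph{Quotient and Hilbert polynomial.} Consider $\mathcal I_C/\mathcal I_{Y_Z}=(x,\widetilde F)/(x\mathcal J_Z+(\widetilde F))$. It is generated by the class of $x$, whose annihilator is $\{a: ax\in x\mathcal J_Z+(\widetilde F)\}$. Restricting to $\Pi$ forces the $\widetilde F$-coefficient into $(x)$, because $F$ is a nonzerodivisor on $\mathcal O_\Pi$; hence $a\in\mathcal J_Z+(\widetilde F)=\mathcal J_Z$, using $\widetilde F\in\mathcal J_Z^2$. Since $x$ has degree one, the quotient is $\mathcal O_Z(-1)$, and the annihilator is exactly $\mathcal J_Z$, which proves \eqref{eq:quartic-annihilator}. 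Finally, $P_C(t)=4t-2$ for a plane quartic (arithmetic genus $3$), and the quotient has constant Hilbert polynomial $2$. Therefore $P_{Y_Z}=P_C+2=4t$, which gives \eqref{eq:quartic-quotient}.
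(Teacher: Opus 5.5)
Your reverse inclusion $\mathcal I_C\cap\mathcal J_Z^2\subset x\mathcal J_Z+(\widetilde F)$ has a genuine gap. The step you call the key point, that $F$ is a nonzerodivisor modulo $\mathcal I_{Z,\Pi}^2$, is false. By hypothesis $F\in\mathcal I_{Z,\Pi}^2$, so $F$ is zero there, and the restriction $bF\in\mathcal I_{Z,\Pi}^2$ tells you nothing about $b$. Your reduction to proving $b\in\mathcal J_Z$ in the non-unit case also aims at a false statement. At a nonreduced point with $\mathcal J_Z=(x,u,v^2)$, the element $g=v\widetilde F$ lies in $\mathcal I_C\cap\mathcal J_Z^2$, and in every representation $g=ax+b\widetilde F$ one has $b\equiv v \bmod x$, so $b\notin\mathcal J_Z$. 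You leave that case as an acknowledged obstacle, and the unit case (``we simply rewrite $g$'') is not argued either.

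The missing observation is that you do not need to control $b$ at all. Since $\widetilde F\in\mathcal J_Z^2$, you have $b\widetilde F\in\mathcal J_Z^2$ for every $b$. Hence $ax=g-b\widetilde F\in\mathcal J_Z^2\cap(x)=x\mathcal J_Z$, and cancelling the nonzerodivisor $x$ gives $a\in\mathcal J_Z$, with no case split. This is exactly the paper's argument via $(\mathcal J_Z^2:x)=\mathcal J_Z$. Your closing remark $g-b\widetilde F\in(x)\cap(\mathcal J_Z^2+(\widetilde F))$ becomes this argument once you note $\mathcal J_Z^2+(\widetilde F)=\mathcal J_Z^2$, the same absorption you use later in the annihilator step.

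The rest of your proposal is sound and follows the paper:
\begin{itemize}
\item your identity $\mathcal J_Z^2\cap(x)=x\mathcal J_Z$ is the paper's colon identity in another form;
\item independence of the lift is handled the same way;
\item the quotient is cyclic, generated by $x$, with annihilator $\mathcal J_Z$, using that $F$ is a nonzerodivisor on $\mathcal O_\Pi$;
\item $P_{Y_Z}=(4t-2)+2$.
\end{itemize}
Your lifting step takes a different route. You use the exact sequence $0\to\mathcal J_Z(3)\xrightarrow{\,x\,}\mathcal J_Z^2(4)\to\mathcal I_{Z,\Pi}^2(4)\to0$ together with $H^1(\PP^3,\mathcal J_Z(3))=0$. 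The paper instead uses the standard resolution of $\mathcal I_{Z,\Pi}^2$ on $\Pi$ to write $F=y^2Q+yq\ell+cq^2$, then lifts the factors. Both are valid. Yours is more intrinsic, while the paper's produces the explicit normal form it reuses in \cref{lem:quartic-degeneration}.
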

\begin{proof}
Every length-two subscheme of the plane is a complete intersection of a line and a
quadric.  Write $\mathcal I_{Z,\Pi}=(y,q)$, with $\deg y=1$ and $\deg q=2$.
The square of this complete intersection ideal has the standard resolution
\[
0\longrightarrow
\mathcal O_\Pi(-4)\oplus\mathcal O_\Pi(-5)
\longrightarrow
\mathcal O_\Pi(-2)\oplus\mathcal O_\Pi(-3)\oplus\mathcal O_\Pi(-4)
\longrightarrow\mathcal I_{Z,\Pi}^{\,2}\longrightarrow0.
\]
After twisting by $4$, the vanishings
$H^1(\Pi,\mathcal O_\Pi)=H^1(\Pi,\mathcal O_\Pi(-1))=0$ show that the
global sections generated by $y^2,yq,q^2$ surject onto
$H^0(\Pi,\mathcal I_{Z,\Pi}^{\,2}(4))$.  We may therefore write
\[
   F=y^2Q+yq\ell+cq^2
\]
for a quadric $Q$, a linear form $\ell$, and a constant $c$.  Lift these
factors to $\PP^3$.  The resulting quartic
$\widetilde F$ lies in $(x,y,q)^2=\mathcal J_Z^2$ and restricts to $F$ on the
plane.

Locally, $\mathcal J_Z$ is generated by a regular sequence containing $x$.  In particular,
\begin{equation}\label{eq:colon-square}
   (\mathcal J_Z^2:x)=\mathcal J_Z;
\end{equation}
for instance, this follows from the polynomial associated graded algebra of an
ideal generated by a regular sequence.  The inclusion
$x\mathcal J_Z+(\widetilde F)\subset\mathcal I_C\cap\mathcal J_Z^2$ is immediate.
For the reverse inclusion, write a local section of the intersection as
$xh+a\widetilde F$.
Both this section and $\widetilde F$ lie in $\mathcal J_Z^2$, hence
$xh\in\mathcal J_Z^2$, and \eqref{eq:colon-square} forces
$h\in\mathcal J_Z$.  This establishes \eqref{eq:quartic-ideal}.  If two allowed lifts
differ by $xA$, then $xA\in\mathcal J_Z^2$, so
$A\in\mathcal J_Z$ by \eqref{eq:colon-square}; the two lifts define the same
ideal.

Multiplication by $x$ induces a surjection
$\mathcal O_{\PP^3}(-1)\to\mathcal I_C/\mathcal I_{Y_Z}$.  Its kernel is
$\mathcal J_Z(-1)$: if
$xh=xg+a\widetilde F$ with $g\in\mathcal J_Z$, reduction modulo $x$ shows
$(a\bmod x)F=0$ in the integral coordinate ring of the plane, so $a=xb$ and
$h=g+b\widetilde F\in\mathcal J_Z$.  The quotient is
$\mathcal O_Z(-1)$.  Its annihilator as an $\mathcal O_{\PP^3}$-module is
precisely $\mathcal J_Z$, which proves \eqref{eq:quartic-annihilator}.  A plane
quartic has Hilbert polynomial $4t-2$, while $Z$ has length two, and
\eqref{eq:quartic-quotient} implies $P_{Y_Z}(t)=4t$.
\end{proof}

\begin{lemma}\label{lem:quartic-degeneration}
For every $Z\in\mathfrak Z_C$, the scheme $Y_Z$ belongs to the principal Hilbert
component $H$ of elliptic quartics, and its fundamental cycle is $C$.
\end{lemma}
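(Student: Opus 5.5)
We have to show two things: $Y_Z$ is a flat limit of smooth elliptic quartics, and its fundamental cycle is $C$. The cycle statement is the easy part. By \cref{lem:quartic-hilbert}, $\mathcal I_C/\mathcal I_{Y_Z}\simeq\mathcal O_Z(-1)$ is supported on the finite set $\supp Z$. So $Y_Z$ agrees with $C$ away from two points and only adds an embedded zero-dimensional structure. The one-dimensional fundamental cycle therefore equals $[C]$, which is reduced and irreducible with multiplicity one. It remains to place $Y_Z$ in the principal component $H$.

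For membership in $H$, I will write down an explicit one-parameter flat family whose general member is a complete intersection of two quadrics and whose special member is $Y_Z$. With the notation of the proof of \cref{lem:quartic-hilbert}, choose generators so that $\mathcal J_Z=(x,y,q)$ and
\[
\widetilde F=y^2\widetilde Q+yq\widetilde\ell+c q^2\in\mathcal J_Z^2 .
\]
The natural quadrics through the limiting configuration are
\[
A_t=tx'+\alpha(y,q),\qquad B_t=x\cdot(\text{linear})+\beta(y,q),
\]
built so that $\operatorname{Res}$-type eliminations reproduce $\widetilde F$ modulo $x$. Concretely, I would first try the following. Write $\widetilde F=y\cdot G+cq^2$ with $G=y\widetilde Q+q\widetilde\ell$, and take the pencil
\[
Q_1(t)=xL_1+t\,y\,L_2-q\,(\ldots),\qquad Q_2(t)=\ldots,
\]
choosing them so that $\det$ of the $2\times 2$ syzygy matrix equals $\widetilde F$ and $x$ appears as $t$ times a minor. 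Determinantal families of this kind (the $2\times2$ minors of a $2\times3$ matrix with entries in $(x,y,q)$ degenerating to $x\cdot\mathcal J_Z$) are the standard model in \cite{AV92,CKSZ}. Then I will check two things. First, the flat limit as $t\to0$ of $(Q_1(t),Q_2(t))$ contains $x\mathcal J_Z+(\widetilde F)$. Second, both ideals have Hilbert polynomial $4t$; this follows from \eqref{eq:quartic-quotient} and the constant Hilbert polynomial of complete intersections. These together give equality of the limit with $\mathcal I_{Y_Z}$. Since a general member of the pencil is a smooth complete intersection (Bertini, after choosing general coefficients), $Y_Z\in\overline{B_{2,2}}^{\rm Hilb}=H$.

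As an alternative that avoids explicit equations, I would invoke the classification \cite[Theorem~5.2]{AV92}. It lists the boundary points of $H$ lying over a plane quartic $C$ as exactly the schemes $C\cup\{\text{embedded length-two structure}\}$ with ideal $\mathcal I_C\cap\mathcal J_Z^2$, where $Z$ ranges over length-two subschemes at which $F$ is singular to second order. That is precisely the condition $Z\in\mathfrak Z_C$, and for the nonreduced $Z$ (the type~(4) stratum) the coordinate form of \cite[Proposition~3.6]{CKSZ} matches \eqref{eq:quartic-ideal}. The work in this route is to translate their normal forms into our intrinsic ideal using the lift-independence in \cref{lem:quartic-hilbert}.

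The main obstacle is the case where $Z$ is nonreduced (a tangent vector at a singular point, e.g.\ a tacnode or a cusp-like point). There an explicit pencil must produce the correct embedded structure along the tangent direction, and flatness has to be checked carefully rather than read off pointwise. I would handle it by working in local coordinates with $\mathcal J_Z=(x,y,z^2)$ and verifying the limit ideal directly, or, failing that, rely on the \cite{CKSZ} normal form.
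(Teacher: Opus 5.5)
Your argument for the fundamental cycle is correct and matches the paper's: $\mathcal I_C/\mathcal I_{Y_Z}\simeq\mathcal O_Z(-1)$ is zero-dimensional, so $Y_Z$ and $C$ agree at every one-dimensional generic point. The gap is the membership $Y_Z\in H$, which is the real content of the lemma. In your first route the degenerating pencil is never written down; the displayed quadrics still contain placeholders. So neither regularity of the pair for $t\ne0$ nor the containment of $x\mathcal J_Z+(\widetilde F)$ in the flat limit can be checked. The $2\times3$ determinantal model is also not the right tool. Since $H^0(\mathcal I_{Y_Z}(2))=\langle x^2,xy\rangle$, any pencil of quadrics degenerating to $Y_Z$ has limiting pencil $\langle x^2,xy\rangle$. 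The quartic $\widetilde F$, and also $xq$, can therefore only enter through $\tau$-saturation, and that saturation has to be computed explicitly. The paper does this with
\[
 A_\tau=cx^2-\tau\ell x+\tau^2Q,\qquad B_\tau=xy+\tau q,\qquad C_\tau=cxq-\tau(yQ+\ell q),
\]
where $F=cq^2+yq\ell+y^2Q$. The identities $yA_\tau-cxB_\tau+\tau\ell B_\tau=-\tau C_\tau$ and $yC_\tau-cqB_\tau=-\tau F$ put $cx^2,xy,cxq,F$ into the special fibre. The essential input is $c\ne0$, and this is exactly where irreducibility of $C$ is used; your proposal never uses irreducibility for membership. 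Regularity of $(A_\tau,B_\tau)$ for $\tau\ne0$ follows because $B_\tau$ is irreducible (linear in $x$ with coprime coefficients $y,\tau q$) and $A_\tau|_{B_\tau=0}=\tau^2F/y^2\ne0$. Equality of Hilbert polynomials then identifies the limit with $Y_Z$, as you intended.

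Three further points. First, Bertini does not give smoothness of the general member of a pencil constrained by a prescribed limit, and smoothness is not needed. Every complete intersection of two quadrics lies in $H$: the locus of regular pairs of quadrics is irreducible and contains the smooth complete intersections as a dense subset, so its whole image in the Hilbert scheme lies in the closed component $H$. Second, the nonreduced case is not a separate obstacle. Writing $I_{Z,\Pi}=(y,q)$ with $y$ linear and $q$ a conic treats reduced and nonreduced $Z$ uniformly and globally, whereas local coordinates $(x,y,z^2)$ do not produce a family in $\PP^3$. Third, your second route rests on an unverified \emph{realization} claim attributed to \cite[Theorem~5.2]{AV92}. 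The paper uses that classification only in the converse direction: every Hilbert point over $[C]$ has the type~(4) form, and this holds for reduced as well as nonreduced $Z$, so type~(4) is not the nonreduced stratum. The fact that every $Z\in\mathfrak Z_C$ is actually realized in $H$ is precisely what this lemma must prove.
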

\begin{proof}
Fix $Z\in\mathfrak Z_C$ and choose plane coordinates with
$I_{Z,\Pi}=(y,q)$, where $q$ is a quadric not divisible by $y$.  Write
\[
 F=cq^2+yq\ell+y^2Q.
\]
Irreducibility of $C$ forces $c\ne0$.  Using the same letters for lifts independent
of $x$, introduce a parameter $\tau$ and set
\[
 A_\tau=cx^2-\tau\ell x+\tau^2Q,
 \qquad
 B_\tau=xy+\tau q,
 \qquad
 C_\tau=cxq-\tau(yQ+\ell q).
\]
Over the fraction field $K=\C(\tau)$, the polynomial $B_\tau$ is linear in $x$
and primitive, with coprime coefficients $y$ and $\tau q$; Gauss's lemma implies
its irreducibility.  In the fraction field of the plane coordinate ring, imposing $B_\tau=0$ forces
$x=-\tau q/y$, so
\[
 A_\tau\big|_{B_\tau=0}
 =\frac{\tau^2}{y^2}\bigl(cq^2+yq\ell+y^2Q\bigr)
 =\frac{\tau^2F}{y^2}\ne0.
\]
$B_\tau$ cannot divide $A_\tau$.  The two quadrics have no common factor
and form a regular sequence.  Let $U\subset H^0(\PP^3,\mathcal O(2))^{\oplus2}$ be the locus of ordered
pairs forming a regular sequence.  It is a nonempty irreducible Zariski-open
set, and its smooth complete intersection locus $U^{\rm sm}$ is nonempty and
dense.  The relative Koszul resolution shows that the universal
complete intersection over $U$ is flat and defines a morphism
$\varphi:U\to\Hilb^{4t}(\PP^3)$.  The smooth locus maps into the principal component $H$.  Because $U^{\rm sm}$
is dense in $U$ and $H$ is closed, the whole image $\varphi(U)$ lies in $H$.  In
particular the generic pair $(A_\tau,B_\tau)$ lies on the principal component
$H$.

For the special fiber, pass to the DVR
$\Lambda=\C[\tau]_{(\tau)}$ and let
\[
 \mathfrak I=((A_\tau,B_\tau):\tau^\infty)
 \subset \Lambda[x_0,x_1,x_2,x_3]
\]
be the $\tau$-saturated homogeneous ideal.  The identities
\[
 yA_\tau-cxB_\tau+\tau\ell B_\tau=-\tau C_\tau,
 \qquad
 yC_\tau-cqB_\tau=-\tau F
\]
show that $C_\tau$ and $F$ belong to $\mathfrak I$.  The quotient
$\Lambda[x_0,x_1,x_2,x_3]/\mathfrak I$ has no $\tau$-torsion.  Each graded piece
is therefore a finitely generated torsion-free module over the DVR $\Lambda$,
hence free.  The associated projective family is flat over $\Lambda$.  Its generic fiber is the complete intersection above,
while its special fiber has Hilbert polynomial $4t$ and homogeneous ideal containing
\[
 I_{Y_Z}:=(x^2,xy,xq,F).
\]
By \cref{lem:quartic-hilbert}, the sheafification of $I_{Y_Z}$ is precisely
$\mathcal I_{Y_Z}$.  The special fiber is a closed subscheme of $Y_Z$ with the same Hilbert polynomial.
The quotient coherent sheaf therefore has zero Hilbert polynomial and vanishes;
the special fiber is exactly $Y_Z$.  The flat family defines a morphism $\operatorname{Spec}\Lambda$ to the Hilbert
scheme.  Since its generic point lies in the closed component $H$, so does the
special point $Y_Z$.  Finally,
$\mathcal I_C/\mathcal I_{Y_Z}\simeq\mathcal O_Z(-1)$ by
\cref{lem:quartic-hilbert}, and this quotient is supported in dimension zero.
Hence $Y_Z$ and $C$ agree at the generic point of every one-dimensional component,
so the fundamental cycle of $Y_Z$ is $C$.
\end{proof}

\begin{lemma}\label{lem:no-punctual}
If $p$ is an ordinary node of $C$, there is no length-two scheme $Z$ supported at $p$ such that $F\in I_{Z,\Pi}^2$.
\end{lemma}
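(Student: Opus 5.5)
We work locally at $p$ in analytic (or formal) coordinates $(u,v)$ on $\Pi$ centered at $p$. Since $p$ is an ordinary node, we may choose these coordinates so that the local equation of $C$ is $F=uv$ times a unit. Replacing $F$ by $uv$ does not change membership in $I_{Z,\Pi}^2$, because membership in an ideal is unaffected by multiplication by a unit. For the same reason we may pass to the completion $\C[[u,v]]$: the ideal $I_{Z,\Pi}$ is $\mathfrak m$-primary, so membership can be checked there. The argument then rests on the classification of length-two schemes supported at $p$. Each such $Z$ is curvilinear, with ideal
\[
  I_Z=(\ell,\mathfrak m^2),
\]
where $\ell=au+bv$ is a nonzero linear form giving the tangent direction. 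Equivalently, after a linear change of the coordinates $(u,v)$ we have $I_Z=(s,t^2)$ with $\{s,t\}$ a basis of linear forms and $s=\ell$.

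Next we compute $I_Z^2$:
\[
  I_Z^2=(s^2,st^2,t^4)\subset(s^2)+\mathfrak m^3 .
\]
The degree-two part of $I_Z^2$ modulo $\mathfrak m^3$ is therefore spanned by $s^2$ alone. Suppose $uv\in I_Z^2$. Taking the quadratic leading form, $uv$ must be a scalar multiple of $\ell^2$. This is impossible, since $uv$ is a product of two distinct (non-proportional) linear forms while $\ell^2$ is the square of one. Hence $uv\notin I_Z^2$, which is the claim.

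The only point needing care is the truncation step. An element of $(s^2,st^2,t^4)$ has the form $\alpha s^2+\beta st^2+\gamma t^4$ with $\alpha,\beta,\gamma$ power series. Its quadratic part is $\alpha(0)s^2$, because the other two terms lie in $\mathfrak m^3$. This reduction is routine, and I expect no real obstacle. The essential input is the nondegeneracy of the Hessian of $F$ at an ordinary node: its quadratic cone is a pair of distinct lines, so it has rank $2$, while every element of $I_Z^2$ has quadratic part of rank at most $1$.
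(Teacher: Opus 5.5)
Your argument is correct and is essentially the paper's proof. Both rest on the same observation: writing $I_Z=(s,t^2)$ forces every element of $I_Z^2$ to have quadratic part a scalar multiple of $s^2$, which has rank at most one and so cannot match the rank-two tangent cone of an ordinary node. Your Morse-lemma normalization $F=uv\cdot(\text{unit})$ is harmless but unnecessary, since the class of $F$ in $\mathfrak m^2/\mathfrak m^3$ is well defined and nondegenerate in any local coordinates.
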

\begin{proof}
Every punctual length-two subscheme of the smooth surface $\Pi$ is curvilinear.  In
formal coordinates we may write $I_Z=(u,v^2)$.  The quadratic part of $I_Z^2$ is
then one-dimensional, generated by $u^2$.  If $F\in I_Z^2$, the quadratic tangent
cone of $C$ at $p$ would have rank one, contradicting the fact that an ordinary node has a nondegenerate quadratic tangent cone of rank two.
\end{proof}

\begin{theorem}[Hilbert--Chow fibers of plane quartics]\label{thm:quartic-fiber}
Let $C\subset\Pi$ be an irreducible plane quartic and let $\mathfrak Z_C$ be
\eqref{eq:quartic-ZC}.  Every $Z\in\mathfrak Z_C$ determines a point
$[Y_Z]\in H$ with fundamental cycle $C$, and every point of the set-theoretic fiber
over $[C]$ arises in this way.  Whenever
$[C]\in X_{2,2}$, the map
\begin{equation}\label{eq:quartic-fiber-bijection}
  \mathfrak Z_C\longrightarrow q^{-1}([C]),
  \qquad Z\longmapsto[Y_Z],
\end{equation}
is a bijection.
\end{theorem}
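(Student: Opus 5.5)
The plan has three parts: the forward direction, injectivity, and surjectivity onto the set-theoretic fiber. The forward direction is already done. For $Z\in\mathfrak Z_C$, \cref{lem:quartic-degeneration} puts $[Y_Z]$ in $H$ with fundamental cycle $C$, so $q([Y_Z])=[C]$ and the map \eqref{eq:quartic-fiber-bijection} is well defined whenever $[C]\in X_{2,2}$. Injectivity is immediate from \cref{lem:quartic-hilbert}. Since $Y_Z\subset C$, as $\mathcal I_C\subset\mathcal I_{Y_Z}$ fails but $\mathcal I_{Y_Z}\subset\mathcal I_C$ holds, the scheme $Y_Z$ determines the coherent quotient $\mathcal I_C/\mathcal I_{Y_Z}$. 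By \eqref{eq:quartic-annihilator} its annihilator is $\mathcal J_Z$, and $\mathcal J_Z$ recovers $Z$ scheme-theoretically. Hence $Y_Z=Y_{Z'}$ forces $Z=Z'$.

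The real work is surjectivity. Let $[Y]\in H$ have fundamental cycle $C$. The plan is to invoke the boundary classification \cite[Theorem~5.2]{AV92}, in the coordinate form \cite[Proposition~3.6]{CKSZ}, for points of $H$ whose cycle is a plane quartic. These are the schemes with ideal $(x^2,xy,xq,F)$, that is, a plane quartic with an embedded or spatial length-two structure. In that form, $Y$ has homogeneous ideal $(x^2,xy,xq,F)$ for some line $y$ and quadric $q$ in $\Pi$, with $F\in(y,q)^2$. Put $Z=V(y,q)\subset\Pi$. Then $F\in H^0(\Pi,\mathcal I_{Z,\Pi}^{2}(4))$, so $Z\in\mathfrak Z_C$, and by \eqref{eq:quartic-ideal} the sheafified ideal is $\mathcal I_{Y_Z}$.

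If one prefers to avoid the coordinate normal form, an intrinsic argument is available. A Hilbert polynomial count shows $\mathcal I_C/\mathcal I_Y$ is a length-two sheaf $\mathcal O_W$-module twisted by $\mathcal O(-1)$. One sets $Z$ equal to its annihilator, which lies in $\Pi$ because $x\cdot\mathcal I_C\subset\mathcal I_Y$ must hold for a non-planar degeneration. Then $\mathcal I_Y\subset\mathcal I_C\cap\mathcal J_Z^2$, and the two ideals are equal because both quotients have length two. The membership $F\in\mathcal J_Z^2$ comes from flatness of the limit, in the spirit of the saturation identities in \cref{lem:quartic-degeneration}. I would nonetheless rely on the cited classification.

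I expect the main obstacle to be this surjectivity step. Specifically, one must verify that every point of $H$ over $[C]$ really has the normal form, and rule out degenerate possibilities. These include schemes where the length-two structure is not contained in $\Pi$, and, for an irreducible $C$, the cases excluded by $c\neq0$. This is where the classification must be used rather than reproved. \Cref{lem:no-punctual} is not needed for the bijection itself; it matters only afterwards, when counting $\mathfrak Z_C$ for nodal $C$.
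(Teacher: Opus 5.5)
Your proposal is correct and follows the paper's proof. The forward direction is \cref{lem:quartic-degeneration}, surjectivity comes from the type~(4) normal form of \cite[Theorem~5.2]{AV92} in the coordinates of \cite[Proposition~3.6]{CKSZ} combined with \cref{lem:quartic-hilbert}, and injectivity is the annihilator identity \eqref{eq:quartic-annihilator}. Your intrinsic alternative is only a heuristic sketch, so relying on the classification is the right call.

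The paper makes explicit a few small steps that you compress:
\begin{enumerate}
\item The last generator of the normal form is a lift $G$ with $G|_\Pi=\lambda F$ for some $\lambda\ne0$, not $F$ itself, so one rescales by $\lambda^{-1}$.
\item Irreducibility of $C$ forces $q|_\Pi$ not to be divisible by $y$. This is what makes $Z=V_\Pi(y,q)$ a genuine length-two scheme.
\item $\mathcal I_{Y_Z}\subset\mathcal I_C$ means $C\subset Y_Z$, not $Y_Z\subset C$.
\end{enumerate}
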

\begin{proof}
The forward implication is \cref{lem:quartic-degeneration}.  Conversely, let
$[Y]\in q^{-1}([C])$.  By the boundary classification of
\cite[Theorem~5.2]{AV92}, a Hilbert point with reduced irreducible plane quartic
fundamental cycle lies in the type~(4) stratum.  In the coordinate notation fixed
above its ideal has the form
\begin{equation}\label{eq:CKSZ-type4}
 I_Y=(x^2,xy,xq_1,\,y^2q_2+yq_1\ell+cq_1^2).
\end{equation}
The normal forms (1)--(3) in that classification cannot have a reduced
irreducible plane quartic fundamental cycle.  Put
\[
 G=y^2q_2+yq_1\ell+cq_1^2.
\]
Because $[Y]$ maps to $[C]$, restriction to the plane gives
$G|_\Pi=\lambda F$ for some $\lambda\ne0$.  Multiplying the last generator by
$\lambda^{-1}$ does not change $I_Y$.  Set
$\widetilde F=\lambda^{-1}G$ and $\bar q_1=q_1|_\Pi$.  Then
\[
 \widetilde F|_\Pi=F,
 \qquad
 I_{Z,\Pi}=(y,\bar q_1).
\]
Irreducibility of $C$ implies that $\bar q_1$ is not divisible by $y$, so
$Z=V_\Pi(y,\bar q_1)$ is a length-two complete intersection.  Since
$\widetilde F\in\mathcal J_Z^2(4)$, one has
$F\in I_{Z,\Pi}^2$, so $Z\in\mathfrak Z_C$.  Sheafifying the type~(4) ideal
and applying \cref{lem:quartic-hilbert} identifies
\[
  \mathcal I_Y=x\mathcal J_Z+(\widetilde F)
  =\mathcal I_C\cap\mathcal J_Z^2
  =\mathcal I_{Y_Z}.
\]
Every point of the fiber therefore has the form $[Y_Z]$ for some
$Z\in\mathfrak Z_C$.

Uniqueness follows from \eqref{eq:quartic-annihilator}:
\[
  \mathcal J_Z
  =\operatorname{Ann}_{\mathcal O_{\PP^3}}
    (\mathcal I_C/\mathcal I_{Y_Z}),
\]
so the Hilbert point $Y_Z$ recovers the full length-two subscheme $Z$, including
its nonreduced structure.  The subscheme $Z$ is therefore uniquely determined by
$Y_Z$, proving injectivity and \eqref{eq:quartic-fiber-bijection}.
\end{proof}

\begin{corollary}\label{cor:nodal-quartic-fiber}
Assume that the singular locus of $C$ consists of exactly $\delta\ge2$ ordinary
nodes
\[
  \Sing C=\{p_1,\ldots,p_\delta\}
\]
and no other singularities.  Then
\begin{equation}\label{eq:nodal-ZC}
  \mathfrak Z_C
  =\{p_i+p_j:1\le i<j\le\delta\},
\end{equation}
so, in particular, $[C]\in X_{2,2}$ and
\begin{equation}\label{eq:nodal-fiber-count}
  \#q^{-1}([C])=\binom{\delta}{2}.
\end{equation}
Moreover, the normalization $X_{2,2}^\nu$ has exactly
$\binom{\delta}{2}$ points over $[C]$.
\end{corollary}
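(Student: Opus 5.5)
The plan is to compute $\mathfrak Z_C$ directly from local conditions, then transport the count through \cref{thm:quartic-fiber}, and finally pass from the Hilbert--Chow fiber to the normalization using Zariski's main theorem.

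\emph{Step 1: compute $\mathfrak Z_C$.} The condition $F\in H^0(\Pi,\mathcal I_{Z,\Pi}^{\,2}(4))$ is stalkwise: a section of $\mathcal O_\Pi(4)$ lies in the subsheaf $\mathcal I_{Z,\Pi}^{\,2}(4)$ if and only if its germ lies in $(\mathcal I_{Z,\Pi}^{\,2})_p$ at every point $p$ of $\supp Z$.
\begin{itemize}
\item If $Z$ is reduced, say $Z=\{a,b\}$ with $a\ne b$, then $(\mathcal I_{Z,\Pi}^{\,2})_a=\mathfrak m_a^2$. So the condition reads $F\in\mathfrak m_a^2$ and $F\in\mathfrak m_b^2$, i.e.\ $a$ and $b$ are singular points of $C$. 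By hypothesis these are nodes $p_i,p_j$ with $i<j$.
\item Conversely, any such pair $p_i+p_j$ satisfies the stalk conditions, hence lies in $\mathfrak Z_C$.
\item If $Z$ is punctual at $p$, then $\mathcal I_{Z,\Pi}^{\,2}\subset\mathfrak m_p^2$. Hence $p$ is a singular point of $C$, so $p$ is an ordinary node. \cref{lem:no-punctual} then excludes this case.
\end{itemize}
This proves \eqref{eq:nodal-ZC}, and so $\#\mathfrak Z_C=\binom{\delta}{2}$.

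\emph{Step 2: the Hilbert--Chow fiber.} Since $\delta\ge2$, the set $\mathfrak Z_C$ is nonempty. For any $Z\in\mathfrak Z_C$, \cref{lem:quartic-degeneration} gives $[Y_Z]\in H$ with fundamental cycle $C$. Thus $q([Y_Z])=[C]$, and since $q$ maps $H$ onto $X_{2,2}$, this shows $[C]\in X_{2,2}$. The bijection \eqref{eq:quartic-fiber-bijection} of \cref{thm:quartic-fiber} then yields \eqref{eq:nodal-fiber-count}.

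\emph{Step 3: normalization.} Here I would use that the principal component $H$ is smooth, via its description as a double blow-up of a smooth variety along smooth centers \cite{GLS,AV92}; in particular $H$ is normal. Since $q:H\to X_{2,2}$ is proper and birational with normal source, it factors as $q=\nu\circ q'$ with $q':H\to X_{2,2}^\nu$ proper and birational. By Zariski's main theorem, $q'$ has connected fibers, since $X_{2,2}^\nu$ is normal (Stein factorization \cite[Tag~03H0]{Stacks}). It is surjective because $H$ is irreducible and $q'$ is proper and dominant.

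It follows that the points of $\nu^{-1}([C])$ are in bijection with the connected components of $q^{-1}([C])$. Each such point $y$ corresponds to the connected set $q'^{-1}(y)$, and these sets partition the fiber into disjoint closed pieces. The fiber $q^{-1}([C])$ is finite by Step~2, so its connected components are exactly its points. Hence $\#\nu^{-1}([C])=\binom{\delta}{2}$.

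\emph{Expected obstacle.} The main point requiring care is Step~3: it needs normality of $H$ in order to invoke Zariski connectedness, and I would rely on the cited smoothness of the principal component for this. The remaining steps are direct.
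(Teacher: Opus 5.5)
Your proposal is correct and follows essentially the same route as the paper. You compute $\mathfrak Z_C$ stalkwise and use \cref{lem:no-punctual} to exclude punctual schemes; you then apply \cref{lem:quartic-degeneration,thm:quartic-fiber} to get $[C]\in X_{2,2}$ and the Hilbert--Chow count; finally, you use normality of $H$ (smoothness from \cite{GLS}) together with connectedness of fibers over the normal target to pass to $X_{2,2}^\nu$. The paper phrases this last step through the Stein factorization $H\to\widetilde X\to X_{2,2}$, identifying $\widetilde X$ with $X_{2,2}^\nu$, which is equivalent to your lift $q'$ combined with Zariski's connectedness theorem.
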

\begin{proof}
If $Z\in\mathfrak Z_C$, then every point of $\supp Z$ is singular on $C$, since
$F\in I_{Z,\Pi}^2$ forces the first jet of $F$ to vanish along the support.
By \cref{lem:no-punctual}, $Z$ cannot be supported at a single ordinary node.
It must be reduced and supported at two distinct nodes, say
$Z=p_i+p_j$ with $i<j$.  Conversely, for distinct nodes $p_i,p_j$, the
length-two reduced scheme $Z_{ij}=p_i+p_j$ satisfies
\[
  F\in I_{p_i,\Pi}^2\cap I_{p_j,\Pi}^2
   =I_{Z_{ij},\Pi}^2,
\]
where the equality is local on the disjoint support.  This proves
\eqref{eq:nodal-ZC}.  By
\cref{lem:quartic-degeneration,thm:quartic-fiber}, the point $[C]$ lies in
$X_{2,2}$ and the Hilbert--Chow fiber has the stated cardinality.

By \cite[Theorem~B]{GLS}, the principal component $H$ is smooth and, in particular,
normal, while $q:H\to X_{2,2}$ is proper birational.  In its Stein factorization
\[
 H\longrightarrow \widetilde X\longrightarrow X_{2,2},
\]
the first morphism has connected fibers and the second is finite; see
\cite[\href{https://stacks.math.columbia.edu/tag/03H0}{Tag~03H0}]{Stacks}.  Normality of $H$ and birationality of $q$ identify
$\widetilde X$ with $X_{2,2}^\nu$.  The finite fiber in
\eqref{eq:nodal-fiber-count} consists of isolated points, so each point is a
connected component of the $q$-fiber.  The normalization fiber has
exactly $\binom{\delta}{2}$ points.
\end{proof}

\begin{corollary}\label{cor:quartic-metric}
If $C$ has exactly three ordinary nodes $p_1,p_2,p_3$ and no other
singularities, write $Y_{ij}:=Y_{p_i+p_j}$.  Then
\[
  q^{-1}([C])=\{Y_{12},Y_{13},Y_{23}\},
\]
the normalization of $X_{2,2}$ has exactly three points over $[C]$, and the
Wasserstein completion of the smooth $(2,2)$ complete intersection locus has
exactly three boundary points above the same Chow cycle.
\end{corollary}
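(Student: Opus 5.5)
The corollary is the case $\delta=3$ of \cref{cor:nodal-quartic-fiber}, combined with \cref{thm:main} applied to $X=X_{2,2}$ and $B=B_{2,2}$. I would carry this out in three steps.

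\emph{Step 1: the Hilbert--Chow fiber.} With $\delta=3$, \eqref{eq:nodal-ZC} gives
\[
\mathfrak Z_C=\{p_1+p_2,\;p_1+p_3,\;p_2+p_3\}.
\]
\Cref{thm:quartic-fiber} then identifies $q^{-1}([C])$ with $\{Y_{12},Y_{13},Y_{23}\}$. These three points are distinct by the annihilator identity \eqref{eq:quartic-annihilator}, since each $Y_{ij}$ recovers its own length-two scheme $p_i+p_j$. The same corollary also gives $[C]\in X_{2,2}$.

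\emph{Step 2: the normalization fiber.} The last assertion of \cref{cor:nodal-quartic-fiber} gives exactly $\binom32=3$ points of $X_{2,2}^\nu$ over $[C]$. This comes from the Stein factorization of $q:H\to X_{2,2}$, using that $H$ is smooth by \cite{GLS}.

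\emph{Step 3: the completion.} By \cref{cor:complete-intersections}, the hypotheses of \cref{thm:main} hold for $X_{2,2}$ with admissible locus $B=B_{2,2}$. The canonical homeomorphism $\Phi:X_{2,2}^\nu\to\widehat B$ therefore sends the three normalization points over $[C]$ to three distinct completion points. To see that they are boundary points, note that $[C]\notin B$ because $C$ is a singular plane curve rather than a smooth complete intersection. Hence the three points lie in $\widehat B\setminus B$.

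It remains to show that no other completion point lies over the Chow cycle $C$. A completion point over $C$ is, by definition, a $\Phi(y)$ with $\nu(y)=[C]$. Equivalently, by injectivity of $C\mapsto\mu_C$, it is a point whose limiting measure is $\mu_C$. In either formulation, bijectivity of $\Phi$ shows that these are exactly the three images.

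The only delicate point is this last compatibility, namely that ``above the same Chow cycle'' for completion points means the fiber of $\nu\circ\Phi^{-1}$, or equivalently of $\mathcal M\circ\Phi^{-1}$. Once that is settled, everything else is a direct citation of earlier results.
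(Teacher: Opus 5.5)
Your proposal is correct and follows the paper's proof: take $\delta=3$ in \cref{cor:nodal-quartic-fiber}, then apply \cref{thm:main} to the smooth $(2,2)$ complete intersection locus, which is admissible by \cref{cor:complete-intersections}. Your extra checks (distinctness of the $Y_{ij}$ via \eqref{eq:quartic-annihilator}, $[C]\notin B_{2,2}$, and reading ``above the same Chow cycle'' through the bijection $\Phi$) only spell out what the paper's one-line proof leaves implicit.
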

\begin{proof}
Take $\delta=3$ in \cref{cor:nodal-quartic-fiber} and apply
\cref{thm:main} to the smooth complete intersection locus.
\end{proof}


\begin{thebibliography}{99}

\bibitem{AFP}
L.~Ambrosio, N.~Fusco, and D.~Pallara,
\emph{Functions of Bounded Variation and Free Discontinuity Problems},
Oxford Mathematical Monographs, Oxford University Press, Oxford, 2000.
\href{https://doi.org/10.1093/oso/9780198502456.001.0001}{doi:10.1093/oso/9780198502456.001.0001}.

\bibitem{AGS}
L.~Ambrosio, N.~Gigli, and G.~Savar\'e,
\emph{Gradient Flows in Metric Spaces and in the Space of Probability Measures},
2nd ed., Lectures in Mathematics ETH Z\"urich, Birkh\"auser, Basel, 2008.
\href{https://doi.org/10.1007/978-3-7643-8722-8}{doi:10.1007/978-3-7643-8722-8}.

\bibitem{ACL}
P.~Antonini, F.~Cavalletti, and A.~Lerario,
\emph{Optimal transport between algebraic hypersurfaces},
Geom. Funct. Anal. \textbf{35} (2025), 43--112.
\href{https://doi.org/10.1007/s00039-025-00699-w}{doi:10.1007/s00039-025-00699-w}.

\bibitem{AV92}
D.~Avritzer and I.~Vainsencher,
\emph{Compactifying the space of elliptic quartic curves},
in \emph{Complex Projective Geometry} (Trieste/Bergen, 1989),
London Math. Soc. Lecture Note Ser. \textbf{179}, Cambridge University Press,
Cambridge, 1992, pp.~47--58.
\href{https://doi.org/10.1017/CBO9780511662652.005}{doi:10.1017/CBO9780511662652.005}.

\bibitem{AS06}
R.~Axelsson and G.~Schumacher,
\emph{K\"ahler geometry of Douady spaces},
Manuscripta Math. \textbf{121} (2006), no.~3, 277--291.
\href{https://doi.org/10.1007/s00229-006-0020-z}{doi:10.1007/s00229-006-0020-z}.

\bibitem{AS21}
R.~Axelsson and G.~Schumacher,
\emph{The Weil--Petersson current on Douady spaces},
Math. Nachr. \textbf{294} (2021), no.~4, 638--656.
\href{https://doi.org/10.1002/mana.201900126}{doi:10.1002/mana.201900126}.

\bibitem{Barlet75}
D.~Barlet,
\emph{Espace analytique r\'eduit des cycles analytiques complexes compacts d'un
espace analytique complexe de dimension finie},
in \emph{Fonctions de plusieurs variables complexes II},
Lecture Notes in Math. \textbf{482}, Springer, Berlin, 1975, pp.~1--158.

\bibitem{BarletVarouchas89}
D.~Barlet and J.~Varouchas,
\emph{Fonctions holomorphes sur l'espace des cycles},
Bull. Soc. Math. France \textbf{117} (1989), no.~3, 327--341.
\href{https://doi.org/10.24033/bsmf.2126}{doi:10.24033/bsmf.2126}.

\bibitem{BM25}
D.~Barlet and J.~Magn\'usson,
\emph{Complex Analytic Cycles II: The Cycle Space},
Grundlehren Math. Wiss. \textbf{363}, Springer, Cham, 2025.
\href{https://doi.org/10.1007/978-3-031-84845-2}{doi:10.1007/978-3-031-84845-2}.

\bibitem{BB00}
J.-D.~Benamou and Y.~Brenier,
\emph{A computational fluid mechanics solution to the Monge--Kantorovich mass
transfer problem},
Numer. Math. \textbf{84} (2000), no.~3, 375--393.
\href{https://doi.org/10.1007/s002110050002}{doi:10.1007/s002110050002}.

\bibitem{CKSZ}
I.~Cheltsov, A.-S.~Kaloghiros, R.~\'Smiech, and J.~Zhao,
\emph{Degenerations of elliptic quartics and K-moduli of Fano threefolds},
\href{https://arxiv.org/abs/2608.30855}{arXiv:2608.30855v1} (2026).

\bibitem{DK01}
J.-P.~Demailly and J.~Koll\'ar,
\emph{Semi-continuity of complex singularity exponents and K\"ahler--Einstein
metrics on Fano orbifolds},
Ann. Sci. \'Ec. Norm. Sup\'er. (4) \textbf{34} (2001), no.~4, 525--556.
\href{https://doi.org/10.1016/S0012-9593(01)01069-2}{doi:10.1016/S0012-9593(01)01069-2}.

\bibitem{DL}
A.~Di Lorenzo,
\emph{Intersection theory on moduli of smooth complete intersections},
Math. Z. \textbf{304} (2023), Art.~39.
\href{https://doi.org/10.1007/s00209-023-03299-2}{doi:10.1007/s00209-023-03299-2}.

\bibitem{Fujiki78}
A.~Fujiki,
\emph{Closedness of the Douady spaces of compact K\"ahler spaces},
Publ. Res. Inst. Math. Sci. \textbf{14} (1978), no.~1, 1--52.
\href{https://doi.org/10.2977/PRIMS/1195189279}{doi:10.2977/PRIMS/1195189279}.

\bibitem{GLS}
P.~Gallardo, C.~Lozano Huerta, and B.~Schmidt,
\emph{Families of elliptic curves in $\PP^3$ and Bridgeland stability},
Michigan Math. J. \textbf{67} (2018), no.~4, 787--813.
\href{https://doi.org/10.1307/mmj/1538705132}{doi:10.1307/mmj/1538705132}.

\bibitem{GKZ}
I.~M.~Gelfand, M.~M.~Kapranov, and A.~V.~Zelevinsky,
\emph{Discriminants, Resultants, and Multidimensional Determinants},
Birkh\"auser, Boston, 1994.
\href{https://doi.org/10.1007/978-0-8176-4771-1}{doi:10.1007/978-0-8176-4771-1}.

\bibitem{GGZ}
V.~Guedj, H.~Guenancia, and A.~Zeriahi,
\emph{Diameter of K\"ahler currents},
J. Reine Angew. Math. \textbf{820} (2025), 115--152.
\href{https://doi.org/10.1515/crelle-2024-0092}{doi:10.1515/crelle-2024-0092}.

\bibitem{LerarioFoCM26}
A.~Lerario,
\emph{Wasserstein geometry of analytic cycle spaces},
conference lecture, Foundations of Computational Mathematics (FoCM 2026),
Vienna, 2026; based on joint work with P.~Antonini, F.~Cavalletti, and L.~Cecchi;
\href{https://focm2026.univie.ac.at/?page_id=1822}{conference program}.

\bibitem{Lott08}
J.~Lott,
\emph{Some geometric calculations on Wasserstein space},
Comm. Math. Phys. \textbf{277} (2008), no.~2, 423--437.
\href{https://doi.org/10.1007/s00220-007-0367-3}{doi:10.1007/s00220-007-0367-3}.

\bibitem{Otto01}
F.~Otto,
\emph{The geometry of dissipative evolution equations: the porous medium equation},
Comm. Partial Differential Equations \textbf{26} (2001), nos.~1--2, 101--174.
\href{https://doi.org/10.1081/PDE-100002243}{doi:10.1081/PDE-100002243}.

\bibitem{PS02}
D.~H.~Phong and J.~Sturm,
\emph{Stability, energy functionals, and K\"ahler--Einstein metrics},
Comm. Anal. Geom. \textbf{11} (2003), no.~3, 565--597.
\href{https://doi.org/10.4310/CAG.2003.v11.n3.a6}{doi:10.4310/CAG.2003.v11.n3.a6}.

\bibitem{PS25}
E.~Pratt and B.~Sturmfels,
\emph{The Chow--Lam form},
J. Symbolic Comput. \textbf{131} (2025), Art.~102450.
\href{https://doi.org/10.1016/j.jsc.2025.102450}{doi:10.1016/j.jsc.2025.102450}.

\bibitem{Rydh08}
D.~Rydh,
\emph{Families of cycles and the Chow scheme},
Ph.D. thesis, KTH Royal Institute of Technology, Stockholm, 2008,
Trita-MAT. MA 08-MA-06;
\href{https://people.kth.se/~dary/thesis/}{available online}.

\bibitem{Stacks}
The Stacks Project Authors,
\emph{Stacks Project}, 2018,
\href{https://stacks.math.columbia.edu}{stacks.math.columbia.edu}.

\end{thebibliography}
\end{document}